\tikzstyle{dashdotdotdotted}=[dash pattern=on 3pt off 2pt on \the\pgflinewidth off 2pt on \the\pgflinewidth off 2pt on \the\pgflinewidth off 2pt]
\tikzstyle{dashdotdotdotdotted}=[dash pattern=on 3pt off 2pt on \the\pgflinewidth off 2pt on \the\pgflinewidth off 2pt on \the\pgflinewidth off 2pt on \the\pgflinewidth off 2pt]
\providecommand{\arxiv}[2][]{\href{http://www.arxiv.org/abs/#2}{arXiv:#2}}
\newtheorem{theorem}{Theorem}
\newtheorem{proposition}[theorem]{Proposition}
\newtheorem{lemma}[theorem]{Lemma}
\newtheorem{corollary}[theorem]{Corollary}
\newtheorem{remark}[theorem]{Remark}
\newtheorem{example}[theorem]{Example}
\newcommand{\tproj}{\mathbb{T}\mathbb{P}}
\newcommand{\tprojp}{\tilde{\mathbb{T}}\mathbb{P}}
\newcommand{\R}{\mathbb{R}}
\newcommand{\T}{\mathbb{T}}
\newcommand{\Tp}{\tilde{\mathbb{T}}}
\newcommand{\K}{\mathbb{K}}
\newcommand{\Kp}{\tilde{\mathbb{K}}}
\newcommand{\NN}{\mathcal{N}}
\newcommand{\PPp}{\tilde{\mathcal{P}}}
\newcommand{\mpinverse}[1]{-{#1}}
\newcommand{\mydef}{:=}
\newcommand{\leqlex}{\leq_{\textrm{lex}}}
\newcommand{\hahn}[1]{\R[\![t^{#1}]\!]}
\DeclareMathOperator{\tconv}{\mathsf{tconv}}
\DeclareMathOperator{\conv}{\mathsf{conv}}
\DeclareMathOperator{\cone}{\mathsf{cone}}
\DeclareMathOperator{\tper}{\mathsf{tper}}
\renewcommand{\mptimes}{\odot}
\newcommand{\utimes}{}
\DeclareMathOperator{\val}{\mathsf{val}}
\renewcommand{\SS}{\mathscr{S}}
\DeclareMathAlphabet\mathbfcal{OMS}{cmsy}{b}{n}
\newcommand{\liftP}{\mathbfcal{P}}
\newcommand{\liftPp}{\mathbfcal{\tilde{P}}}
\newcommand{\liftH}{\mathbfcal{H}}
\newcommand{\liftHp}{\mathbfcal{\tilde{H}}}
\newcommand{\liftvp}{\tilde{\mathbold{v}}}
\newcommand{\liftxp}{\tilde{\mathbold{x}}}
\newcommand{\liftdp}{\tilde{\mathbold{d}}}
\newcommand{\liftfp}{\tilde{\mathbold{f}}}
\newcommand{\liftx}{\mathbold{x}}
\newcommand{\lifty}{\mathbold{y}}
\newcommand{\liftv}{\mathbold{v}}
\newcommand{\liftf}{\mathbold{f}}
\newcommand{\vp}{\tilde{v}}
\newcommand{\ap}{\tilde{a}}
\newcommand{\xp}{\tilde{x}}
\newcommand{\Ap}{\tilde{A}}
\newcommand{\lift}[1]{\mathbold{#1}}
\title{Tropicalization of facets of polytopes}
\thanks{The authors were partially supported by the PGMO program of EDF and
  Fondation Math\'ematique Jacques Hadamard. An important part of this work was done during a visit of Ricardo D.~Katz to the Ecole Polytechnique, which was funded by Digit{\'e}o. Ricardo D.~Katz was also partially supported by CONICET Grant PIP 112-201101-01026.}
\author{Xavier {A}llamigeon}
\address{INRIA and CMAP, \'Ecole Polytechnique, CNRS, 91128 Palaiseau Cedex, France}
\email{xavier.allamigeon@inria.fr}
\author{Ricardo D.~Katz}
\address{CONICET-CIFASIS, Bv. 27 de Febrero 210 bis, 2000 Rosario, Argentina}
\email{katz@cifasis-conicet.gov.ar}
\keywords{Tropical convexity, polytopes, facet-defining half-spaces, external representations, Puiseux series field, Hahn series field}
\subjclass[2010]{14T05, 52A01}
\begin{document}
 
\begin{abstract}
It is known that any tropical polytope is the image under the valuation map of ordinary polytopes over the Puiseux series field. The latter polytopes are called lifts of the tropical polytope. We prove that any pure tropical polytope is the intersection of the tropical half-spaces given by the images under the valuation map of the facet-defining half-spaces of a certain lift. We construct this lift explicitly, taking into account geometric properties of the given polytope. Moreover, when the generators of the tropical polytope are in general position, we prove that the above property is satisfied for any lift. This solves a conjecture of Develin and Yu. 
\end{abstract}

\maketitle

\section{Introduction} 

The {\em max-plus semiring} $\maxplus$, sometimes also referred to as the \emph{tropical semiring}, consists of the set $\R\cup \{-\infty\}$ equipped with $x\mpplus y\mydef \max \{x,y\}$ as addition and $x\mptimes y \mydef x+y$ as multiplication. The set $\maxplus^n$ is a semimodule over the max-plus semiring when equipped with the component-wise tropical addition $(u,v) \mapsto u \mpplus v\mydef (u_1 \mpplus v_1,\ldots ,u_n \mpplus v_n)$ and the tropical scalar multiplication $(\lambda,u) \mapsto \lambda \mptimes u \mydef (\lambda \mptimes u_1,\ldots ,\lambda \mptimes u_n)$. For our purposes, it turns out to be more convenient to restrict ourselves to points with finite coordinates, and work in the {\em tropical projective space} $\tproj^{n-1}\mydef \R^n/(1,\ldots ,1) \R$ modding out by tropical scalar multiplication. 

The tropical analogues of ordinary polytopes in the Euclidean space are defined as the tropical convex hulls of finite sets of points in the tropical projective space. More precisely, the {\em tropical polytope} generated by the points 
$v^1,\dots , v^p$ of $\tproj^{n-1}$ is defined as 
\begin{equation}\label{EqDefTropPoly}
\PP =\tconv \left( \{ v^1,\dots , v^p \} \right)\mydef \left\{(\lambda_1 \mptimes v^1) \oplus \cdots \oplus (\lambda_p \mptimes v^p) \in \tproj^{n-1}\mid 
\lambda_1, \ldots , \lambda_p \in \R \right\} \; . 
\end{equation}
A tropical polytope is said to be {\em pure} when it coincides with the closure of its interior (with respect to the topology of $\tproj^{n-1}$ induced by the usual topology of $\R^n$). We refer to Figure~\ref{fig:running_example} below for an example (for visualization purposes, we represent the point $(x_1, \ldots, x_n)$ of $\tproj^{n-1}$ by the point $(x_2- x_1, \ldots ,x_n - x_1)$ of $\R^{n-1}$).

The interest in tropical polytopes (also known as semimodules or max-plus cones), and more generally in tropically convex sets, comes from different areas, which include optimization~\cite{zimmerman77}, idempotent functional analysis~\cite{litvinov00} and control theory~\cite{ccggq99}.
They are also interesting combinatorial objects, with connections to, for example, subdivisions of a product of simplices and polyhedral complexes~\cite{DS}, monomial ideals~\cite{blockyu06,DevelinYu}, and oriented matroids~\cite{ArdilaDevelin}. Some algorithmic aspects of tropical polytopes have also been established, see for instance~\cite{joswig-2008,AllamigeonGaubertGoubaultDCG2013}. 

As in classical convexity, tropical polytopes are precisely the bounded intersections of finitely many tropical half-spaces, \ie\ they also admit finite external representations. A {\em tropical half-space} is defined as a set of the form 
\begin{equation}\label{EqDefTropHS}
\left\{x\in \tproj^{n-1}\mid \mpplus_{i\in I} c_i \mptimes x_i \geq \mpplus_{j\in J} c_j \mptimes x_j \right\} \; ,  
\end{equation}  
where $I,J$ is a non-trivial partition of $\{1,\dots ,n\}$ and $c_k\in \R$ for $k\in \{1,\dots ,n\}$. Tropical polytopes also arise as the images under the valuation map of ordinary polytopes over non-archimedean ordered fields, such as the fields of real Puiseux or Hahn series. In this context, a \emph{lift} of a tropical polytope $\PP$ is any polytope $\liftP$ over the non-archimedean ordered field whose image under the valuation map is $\PP$ (see Section~\ref{SubSectionLifts} for details). 

A recurrent problem in tropical geometry is to study whether we can obtain a representation of a given ``tropical object'' by tropicalizing the representation of a certain lift, \ie~by considering the image under the valuation map of this representation. This has been initially studied in the case of tropical varieties, which are defined as the image under the valuation map of 
algebraic varieties over a non-archimedean and algebraically closed field. 
In more details, it has been shown that any ideal of polynomials over such a field has a \emph{tropical basis}, \ie~it has a finite generating set $\{f_1,\dots ,f_m\}$ such that the tropical variety associated with the ideal is given by the intersection of the tropical hypersurfaces associated with the polynomials in $\{f_1,\dots ,f_m\}$. We refer to~\cite{MaclaganSturmfels2015} and the references therein for further details.

In the case of polytopes, it is natural to study the images under the valuation map of the external representations of lifts  provided by their facet-defining half-spaces. An additional motivation for this is the fact that the notion of faces, in particular of facets, is not yet well understood in the tropical setting, see~\cite{DevelinYu}. 
In this regard, Develin and Yu conjectured in~\cite[Conjecture~2.10]{DevelinYu} the following result, the proof of which is the main contribution of this paper.
\begin{theorem}\label{ConjDevelinYu}
For any pure tropical polytope $\PP$ there exists a lift $\liftP$ such that the images under the valuation map of the facet-defining half-spaces of $\liftP$ are tropical half-spaces whose intersection is $\PP$. If, in addition, the generators of $\PP$ are in general position, then any lift works.
\end{theorem}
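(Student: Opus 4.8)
The plan is to work throughout with tropical cones and their lifts over the field $\K$ of real Puiseux (or Hahn) series: a pure tropical polytope $\PP\subseteq\tproj^{n-1}$ is the projectivization of a pure tropical cone in $\maxplus^n$, a lift $\liftP$ of $\PP$ is a polyhedral cone contained in the positive orthant of $\K^n$ with $\val(\liftP)=\PP$, and a facet-defining half-space of $\liftP$ has the homogeneous form $\{x:\langle\alpha,x\rangle\le 0\}$ for some $\alpha\in\K^n$. The first step is a one-inequality dictionary. If $\alpha$ has full support, set $I=\{k:\alpha_k>0\}$, $J=\{k:\alpha_k<0\}$ and $c_k=\val|\alpha_k|$; then, for $x$ in the positive orthant, $\langle\alpha,x\rangle\le 0$ reads $\langle\alpha^+,x\rangle\le\langle\alpha^-,x\rangle$, and since these are series with positive coefficients, applying $\val$ and using its additivity-as-max shows that the image of $\{x:\langle\alpha,x\rangle\le 0\}$ under $\val$ is exactly the tropical half-space $\{y:\bigoplus_{i\in I}c_i\odot y_i\le\bigoplus_{j\in J}c_j\odot y_j\}$ --- the inclusion ``$\subseteq$'' is monotonicity of $\val$, and ``$\supseteq$'' follows by choosing the leading coefficients of a lift of a boundary point so that the $J$-side dominates. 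Combined with the fact that $\val$ commutes with classical convex hulls, so that $\val(\conv(\tilde v^1,\dots,\tilde v^p))=\tconv(v^1,\dots,v^p)$ for \emph{any} lifts $\tilde v^i$ of the generators $v^i$ of $\PP$, this reduces Theorem~\ref{ConjDevelinYu} to two tasks: arranging that every facet normal of the chosen $\liftP$ has full support (so its image is a genuine tropical half-space), and arranging that the intersection of these images is not strictly larger than $\PP$.

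Next I would fix a canonical irredundant external representation $\PP=\bigcap_{l=1}^{m}H_l$ by tropical half-spaces, extracted from the structure of pure tropical polytopes: using the covector (type) decomposition of~\cite{DS}, pureness guarantees that the maximal cells of $\partial\PP$ are full-dimensional and that each lies on the boundary of a unique minimal tropical half-space $H_l$, with apex a pseudo-vertex of $\PP$. The lift is then built by lifting the generators to points $\tilde v^i$ in the positive orthant of $\K^n$ whose higher-order terms are prescribed by this combinatorial data: for each $l$ the classical hyperplane that tropicalizes the boundary of $H_l$ is forced to be the affine span of an appropriate subset of the $\tilde v^i$ --- hence facet-defining for $\liftP:=\conv(\tilde v^1,\dots,\tilde v^p)$ --- while all remaining choices are made generically so that no other facet appears and all facet normals have full support. (An equivalent and perhaps more transparent route is to realize each $H_l$ as $\val(\tilde H_l)$ for suitably generic classical half-spaces $\tilde H_l$ over $\K$ and to set $\liftP:=\bigcap_l\tilde H_l$, at the price of separately checking that $\val$ commutes with this intersection under the genericity hypothesis, and that $\liftP$ is bounded.)

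It then remains to verify three points. First, $\val(\liftP)=\PP$, which is immediate in the convex-hull construction. Second, the external description of $\liftP$ furnished by the construction is irredundant, i.e.\ each tropicalizing half-space is genuinely facet-defining: this follows from the irredundancy of $H_1,\dots,H_m$ together with $\val(\tilde H_l)=H_l$ --- choosing $y$ in the interior of $\bigcap_{l'\ne l}H_{l'}$ but outside $H_l$, every lift of $y$ lies in $\bigcap_{l'\ne l}\tilde H_{l'}$ but not in $\tilde H_l$, since $\tilde H_l\subseteq\val^{-1}(H_l)$. Third, completeness: since $\liftP$ is cut out irredundantly by these half-spaces, they are precisely its facet-defining ones, their images under $\val$ are exactly $H_1,\dots,H_m$, and $\bigcap_l H_l=\PP$ by construction. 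For the general-position statement I would argue that when $v^1,\dots,v^p$ are in general position the type decomposition of $\PP$ is non-degenerate, so that the combinatorial type of $\conv(\tilde v^1,\dots,\tilde v^p)$ over $\K$ --- and in particular the supports and valuations of its facet normals --- is independent of the chosen lift $\tilde v^i$; therefore every such $\liftP$ tropicalizes facet by facet to the same canonical family $\{H_l\}$, and a further genericity argument (pinning down the vertices of an arbitrary lift of $\PP$) upgrades this from lifts of the generators to arbitrary lifts.

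I expect the main obstacle to be the completeness direction $\bigcap_l\val(\text{facets of }\liftP)\subseteq\PP$ outside the generic case: the inclusion $\PP\subseteq\bigcap_l\val(\text{facets})$ is essentially free from monotonicity of $\val$, but making the \emph{explicit} lift so that the tropicalized facets cut out $\PP$ and nothing larger is delicate. This is exactly where pureness is indispensable --- one must ensure that every maximal cell of $\partial\PP$ is witnessed by some full-support facet of $\liftP$ --- and the simultaneous control of the higher-order terms of all the $\tilde v^i$ across all boundary cells, while keeping the representation irredundant and $\liftP$ bounded, is the combinatorial--valuative heart of the argument; the one-inequality dictionary and the irredundancy bookkeeping are, by comparison, routine.
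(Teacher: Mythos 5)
Your skeleton (a valuation dictionary for single half-spaces, a canonical external representation of $\PP$ by minimal half-spaces with pseudovertex apices, and a lift engineered so that each of these half-spaces is the image of a facet) matches the paper's strategy, but the two load-bearing steps are missing rather than merely deferred. First, you never establish that each half-space $H_l$ of your canonical representation has enough generators on its boundary for ``the affine span of an appropriate subset of the $\tilde v^i$'' to exist at all. The paper's key combinatorial input here is that purity is equivalent to every extreme point having a \emph{unique} extremality type (Lemma~\ref{LemmaExtTypePure}), and that, via the three defining properties of $(I,j)$-pseudovertices (Theorem~\ref{ThNonRedundantApex}), the boundary of each associated half-space then contains $n-1$ distinct extreme points of $\PP$ with prescribed types (Proposition~\ref{PropNonRedunApexPure}); Figure~\ref{fig:non-pure} shows this fails for non-pure polytopes, so some such argument is unavoidable. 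You also need the converse mechanism turning boundary containment into facet-ness and identifying the image exactly (Proposition~\ref{prop:facet_characterization}, which uses uniqueness of the signed hyperplane through $n-1$ points in general position); without it, even in the generic case your claim that the facet images ``cut out $\PP$ and nothing larger'' is asserted, not proved.

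Second, and more seriously, in the non-generic case your plan to choose the higher-order terms ``generically so that no other facet appears'' cannot work and also aims at the wrong target: a lift typically has more facets than any non-redundant tropical representation (the paper's $\tproj^4$ example has $26$ non-redundant half-spaces but $27$ facets), extra facets are harmless since their images automatically contain $\PP$, and generic perturbations are precisely what fail --- the paper's closing remark exhibits a lift of the polytope of Figure~\ref{fig:perturbation}(a), with coefficients $\delta \in ]0,1[$ as generic as one likes, whose facet images intersect in a set strictly larger than $\PP$. The perturbation must be adapted to the geometry: the paper perturbs each extreme generator at first order by the indicator vector of its (unique) extremality type, which is exactly what guarantees that forgetting the perturbation still yields an external representation of $\PP$ (Lemma~\ref{lemma:fst_order_pert_apex} and Proposition~\ref{prop:projection_halfspaces}); only the second-order terms are generic, to reach general position, and one then needs a transfer argument (elementary equivalence of ordered divisible abelian groups) to reuse the generic-position theorem over the perturbed semiring, plus an instantiation of the infinitesimals by small reals (Lemma~\ref{lemma:projection}) to produce an actual lift over $\K$. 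You explicitly set aside this ``combinatorial--valuative heart'', but it is the content of the first half of the theorem, so as it stands the proposal is a plan rather than a proof; the general-position half is closer, but still needs the two ingredients above to conclude that the facet images do not merely contain $\PP$ but intersect exactly in it.
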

Using the notation of Theorem~\ref{ConjDevelinYu}, this establishes that, when the tropical polytope $\PP$ is pure, the following diagram commutes:
\begin{center}
\begin{tikzpicture}[>=stealth']
\node[text width=4cm,text centered] (n1) at (0,0) {facet-defining half-spaces};
\node[text width=4cm,text centered] (n2) at (7,0) {convex polytope $\liftP$};
\node[text width=4cm,text centered] (n3) at (0,-2) {tropical half-spaces};
\node[text width=4cm,text centered] (n4) at (7,-2) {tropical polytope $\PP$};

\draw[->] (n1) -- node[above,font=\small] {intersection} (n2);
\draw[->] (n3) -- node[below,font=\small] {intersection} (n4);
\draw[->] (n1) -- node[left,font=\small] {valuation} (n3);
\draw[->] (n2) -- node[right,font=\small] {valuation} (n4);
\end{tikzpicture}
\end{center}
A few such commutation results on tropical polytopes have been established so far. Before stating their conjecture, Develin and Yu show in~\cite[Proposition~2.9]{DevelinYu} that if the intersection of the tropical half-spaces obtained from the facet-defining half-spaces is pure, then the diagram above commutes. Independently, Allamigeon et al.\ have shown in~\cite{AllamigeonBenchimolGaubertJoswigSIDMA2015} that this also holds when these tropical half-spaces are in a general position. In fact, the latter result can be recovered from the one of Develin and Yu, since the general position assumption implies the purity condition of~\cite[Proposition~2.9]{DevelinYu}, see~\cite[Theorem~45]{JoswigLoho} for a proof.

This paper is organized as follows. Section~\ref{SectionPrelim} is devoted to recalling results and concepts that are needed in this paper. In Section~\ref{SectionPropPure} we establish some combinatorial properties of pure tropical polytopes, which are essential in our proof Theorem~\ref{ConjDevelinYu}. Notably, we identify a particular external representation for pure polytopes which is used in Section~\ref{SectionGeneralP} to prove the second part of  Theorem~\ref{ConjDevelinYu}. The part of Theorem~\ref{ConjDevelinYu} concerning generators not necessarily in general position, which is dealt with in Section~\ref{SectionArbitraryP}, is more difficult to prove because in that case not every lift works. To prove it we use the result when the generators are in general position together with a symbolic perturbation technique, where the perturbation is based on some geometric properties of the tropical polytope. Finally, it is worth mentioning that our proof of Theorem~\ref{ConjDevelinYu} is constructive.

\begin{figure}
\begin{tikzpicture}[convex/.style={draw=black,ultra thick,fill=lightgray,fill opacity=0.7},point/.style={blue!50},>=triangle 45,
hsborder/.style={green!60!black,dashdotted,thick},
hs/.style={draw=none,pattern=north east lines,pattern color=green!60!black,fill opacity=0.5},
apex/.style={green!60!black},
shsborder/.style={orange,dashdotted,thick},
shs/.style={draw=none,pattern=north west lines,pattern color=orange,fill opacity=0.5},
]
\draw[gray!30,very thin] (-0.5,-0.5) grid (8.5,6.5);
\draw[gray!50,->] (-0.5,0) -- (8.5,0) node[color=gray!50,below] {$x_2-x_1$};
\draw[gray!50,->] (0,-0.5) -- (0,6.5) node[color=gray!50,above] {$x_3-x_1$};

\coordinate (v1) at (1,3);
\coordinate (v12) at (3,3);
\coordinate (v2) at (3,1);
\coordinate (v23) at (5,1);
\coordinate (v3) at (6,2);
\coordinate (v34) at (6,5);
\coordinate (v4) at (2,5);
\coordinate (v41) at (1,4);

\begin{scope}
\clip (-0.5,-0.5) rectangle (8.5,6.5);
\begin{scope}[shift={(-1,-1)}]
\filldraw[hs] (-10,3) -- (3,3) -- (3,-10) -- (3.3,-10) -- (3.3,3.3) -- (-10,3.3) -- cycle;
\draw[hsborder] (-10,3) -- (3,3) -- (3,-10);
\filldraw[apex] (3,3) circle (2.5pt) node[below left] {$a$};
\end{scope}

\draw[shsborder] (6,-10) -- (6,2) -- (10,6);
\filldraw[shs] (6,-10) -- (6,2) -- (10,6) -- (10,5.576) -- (6.3,1.876) -- (6.3,-10) -- cycle;
\end{scope}

\filldraw[convex] (v1) -- (v12) -- (v2) -- (v23) -- (v3) -- (v34) -- (v4) -- (v41) -- cycle;
\filldraw[point] (v1) circle (3pt) node[below left=1pt] {$v^1$};
\filldraw[point] (v2) circle (3pt) node[below left=1pt] {$v^2$};
\filldraw[point] (v3) circle (3pt) node[below right=3pt] {$v^3$};
\filldraw[point] (v4) circle (3pt) node[above left=1pt] {$v^4$};

\end{tikzpicture}
\caption{A pure tropical polytope (in grey), and the half-space $\HH(a,I)$ with $a = (0,2,2)$ and $I = \{2,3\}$ (in green). The extreme points $v^1, \dots, v^4$ of the polytope are depicted in blue. The point $v^3 = (0,6,2)$ is extreme of type $2$ because it is the only point of the polytope contained in the sector $\SS(v^3,2)$ (in orange).}\label{fig:running_example}
\end{figure}

\section{Preliminaries}\label{SectionPrelim}

Given a natural number $n$, in what follows we denote the set $\{1,\ldots ,n\}$ by $\oneto{n}$.

The {\em tropical permanent} of a square matrix $U = (u_{ij})\in \maxplus^{n \times n}$ is defined by 
\begin{equation}\label{eq:Deftperm}
\tper (U) \mydef 
\mpsum_{\sigma \in \Sigma(n)} u_{1\sigma(1)} \mptimes \cdots \mptimes u_{n\sigma(n)} = \max_{\sigma \in \Sigma (n)} (u_{1\sigma(1)} + \dots + u_{n\sigma(n)}) \; , 
\end{equation}
where $\Sigma(n)$ is the set of permutations of $\oneto{n}$. The matrix $U$ is said to be {\em tropically singular} the maximum in~\eqref{eq:Deftperm} is attained at least twice. 
We shall say that the points $v^1,\dots , v^p$ of $\tproj^{n-1}$ are in {\em general position} when every square submatrix of the $n \times p$ matrix whose columns are $v^1,\dots , v^p$ is tropically non-singular.

Consider the tropical half-space given in~\eqref{EqDefTropHS}. The point $a= ( \mpinverse{c}_1,\ldots , \mpinverse{c}_n)\in \tproj^{n-1}$ will be called the {\em apex} of the half-space. This half-space will be then denoted by $\HH(a,I)$, \ie
\[
\HH(a,I)\mydef \left\{ x\in \tproj^{n-1}\mid \mpplus_{i\in I} (\mpinverse{a}_i) \mptimes x_i \geq \mpplus_{j\in \oneto{n}\setminus I} (\mpinverse{a}_j) \mptimes x_j \right\} \; .
\]
See Figure~\ref{fig:running_example} for an illustration. A {\em signed (tropical) hyperplane} is a set of the form 
\[
\left\{ x\in \tproj^{n-1} \mid \mpplus_{i\in I} (-a_i) \mptimes x_i =  \mpplus_{j \in J} (-a_j) \mptimes x_j \right\}\; , 
\]
where $I,J$ is a non-trivial partition of $\oneto{n}$ and $a_k\in \R$ for $k\in \oneto{n}$. It corresponds to the boundary of the tropical half-space $\HH(a,I)$. A {\em tropical hyperplane} is defined as the set of points $x\in \tproj^{n-1}$ where the maximum in a tropical linear form $\oplus_{i\in \oneto{n}} (-a_i) \mptimes x_i =\max_{i\in \oneto{n}} (-a_i) + x_i$ is attained at least twice. In both cases, the point $a =(a_1,\ldots ,a_n)\in \tproj^{n-1}$ is referred to as the {\em apex} of the hyperplane. These objects are illustrated in Figure~\ref{fig:hyperplane}. 

Observe that the complement of the tropical hyperplane with apex $a$ can be partitioned in $n$ connected regions. The closure of each of these regions is a set of the form
\[
\SS(a,i)\mydef \left\{x \in \tproj^{n-1} \mid (\mpinverse{a}_i)\mptimes  x_i \geq  \mpplus_{j\in \oneto{n}\setminus \{ i \} } (\mpinverse{a}_j)\mptimes  x_j \right\} \; , 
\]   
where $i \in \oneto{n}$. This is a special tropical half-space usually known as (closed) \emph{sector}, 
see the right hand-side of Figure~\ref{fig:hyperplane}. The tropical half-space $\HH(a,I)$ is the union of the sectors $\SS(a,i)$ for $i \in I$.
Another property of sectors is the following equivalence, which we will use several times in the rest of the paper:
\begin{equation}\label{PropertySectors}
b\in \SS(a,i) \iff \SS(b,i) \subset \SS(a,i) \; .
\end{equation}

\begin{figure}
\begin{center}
\begin{tikzpicture}[convex/.style={draw=lightgray,fill=lightgray,fill opacity=0.7},convexborder/.style={ultra thick},point/.style={blue!50},>=triangle 45,scale=0.7]
\begin{scope}
\draw[gray!30,very thin] (-0.5,-0.5) grid (9.5,6.5);
\draw[gray!50,->] (-0.5,0) -- (9.5,0) node[color=gray!50,below] {$x_2-x_1$};
\draw[gray!50,->] (0,-0.5) -- (0,6.5) node[color=gray!50,above] {$x_3-x_1$};

\node[coordinate] (a) at (5,3) {};
\node[coordinate] (v3) at (8.5,6.5) {};
\node[coordinate] (v2) at (5,-0.5) {};	
\draw[convexborder] (v2) -- (a) -- (v3);
\filldraw[point] (a) circle (3.5pt) node[above left=1.5pt] {$a$};
\end{scope}
\begin{scope}[xshift=11.9cm]
\draw[gray!30,very thin] (-0.5,-0.5) grid (9.5,6.5);
\draw[gray!50,->] (-0.5,0) -- (9.5,0) node[color=gray!50,below] {$x_2-x_1$};
\draw[gray!50,->] (0,-0.5) -- (0,6.5) node[color=gray!50,above] {$x_3-x_1$};

\node[coordinate] (a) at (5,3) {};
\node[coordinate] (v1) at (-0.5,3) {};
\node[coordinate] (v2) at (5,-0.5) {};
\node[coordinate] (v3) at (8.5,6.5) {};
\draw[convexborder] (a) -- (v1) (a) -- (v2) (a) -- (v3);
\filldraw[point] (a) circle (3.5pt) node[above left=1.5pt] {$a$};
\node at (2,1.3) {$\SS(a,1)$};
\node at (7.5,2.5) {$\SS(a,2)$};
\node at (3.5,5.5) {$\SS(a,3)$};
\end{scope}
\end{tikzpicture}
\end{center}
\caption{Left: the signed hyperplane $\left\{ x\in \tproj^2 \mid x_1 \mpplus (-3) \mptimes x_3 =  (-5) \mptimes x_2 \right\}$, with apex $a = (0,5,3)$. Right: the tropical hyperplane with apex $a$ and the associated sectors.}
\label{fig:hyperplane}
\end{figure}

A set $\SS \subset \tproj^{n-1}$ is said to be \emph{tropically convex} if the point $(\lambda \mptimes x) \mpplus (\mu \mptimes y)$ belongs to $\SS$ for all $x, y \in \SS$ and $\lambda, \mu \in \R$. Tropical polytopes, half-spaces and (signed) hyperplanes are all tropically convex sets.

Henceforth, $\PP\subset \tproj^{n-1}$ denotes the tropical polytope~\eqref{EqDefTropPoly} generated by the points $v^1,\dots , v^p$ (which we assume are all distinct). The {\em type} of $a\in \tproj^{n-1}$ with respect to the points $v^1,\dots , v^p$ is defined as the $n$-tuple $S(a)=(S_1(a),\ldots ,S_n(a))$ where, for each $i\in \oneto{n}$, the set $S_i(a)$ consists of the super-indexes $r\in \oneto{p}$ such that $v^r$ is contained in the sector $\SS(a,i)$:
\begin{align*} 
S_i(a) & \mydef 
\bigl\{ r\in \oneto{p} \mid v^r \in \SS(a,i) \bigr\} = \bigl\{ r\in \oneto{p} \mid (\mpinverse{a}_i) \mptimes v^r_i \geq (\mpinverse{a}_j) \mptimes v^r_j \; \text{ for all}\; j\in \oneto{n} \bigr\} \; .
\end{align*}
Thus, $S(a)$ provides information on the relative position of $a$ with respect to the points $v^1,\dots , v^p$.

Given a $n$-tuple $S=(S_1,\ldots , S_n)$ of subsets of 
$\oneto{p}$, the set $X_S$ composed of all the points whose type contains $S$, \ie  
\[
X_S\mydef \left\{ x\in \tproj^{n-1} \mid S_i\subset S_i(x),\; \forall i\in \oneto{n} \right\} \; ,
\] 
is a closed convex polyhedron, see~\cite[Lemma~10]{DS}. More precisely, we have 
\begin{equation}\label{EqCell}
X_S = \left\{ x\in \tproj^{n-1} \mid (\mpinverse{x}_i) \mptimes x_j \geq (\mpinverse{v}^r_i) \mptimes v^r_j \makebox{ for all } j,i\in \oneto{n} \makebox{ and }  r\in S_i \right\} \; .
\end{equation}   
Recall that the \emph{natural cell decomposition} of $\tproj^{n-1}$ induced 
by the points $v^1,\dots , v^p$ is the polyhedral complex formed by the convex polyhedra $X_S$, where $S$ ranges over all the possible types. For instance, Figure~\ref{fig:cell_decomposition} illustrates the cell decomposition induced by the generators $v^1, \dots, v^4$ of the polytope in Figure~\ref{fig:running_example}. A point $a\in \tproj^{n-1}$ is called a \emph{pseudovertex} of the cell decomposition when $\{ a \}$ is one of its (zero-dimensional) cells. By~\cite[Theorem~15]{DS} it is known that the tropical polytope $\PP$ precisely coincides with the union of the bounded cells. The latter are characterized as the cells $X_S$ such that $S_i \neq \emptyset$ for all $i \in \oneto{n}$.  We refer the reader to~\cite{DS} for a detailed presentation of types and the associated cell decomposition, warning that the results of~\cite{DS} are in the setting of the min-plus semiring, which is however equivalent to the setting considered here. 

\begin{figure}
\begin{tikzpicture}[scale=1.2,convexborder/.style={very thick},point/.style={blue!50},pseudov/.style={red!50},
pseudovIJ/.style={green!50!black},>=triangle 45,convex/.style={draw=none,fill=lightgray,fill opacity=0.2}]
\draw[gray!30,very thin] (-0.5,-0.5) grid (8.5,6.5);
\draw[gray!50,->] (-0.5,0) -- (8.5,0) node[color=gray!50,below] {$x_2-x_1$};
\draw[gray!50,->] (0,-0.5) -- (0,6.5) node[color=gray!50,above] {$x_3-x_1$};

\coordinate (v1) at (1,3);
\coordinate (v12) at (3,3);
\coordinate (v2) at (3,1);
\coordinate (v23) at (5,1);
\coordinate (v3) at (6,2);
\coordinate (v34) at (6,5);
\coordinate (v4) at (2,5);
\coordinate (v41) at (1,4);
\coordinate (u1) at (6,3);
\coordinate (u2) at (3,5);

\begin{scope}
\clip (-0.5,-0.5) rectangle (8.5,6.5);

\begin{scope}[shift={(v1)}]
\draw[convexborder,dashed] (0,0) -- (-20,-20) (0,0) -- (0,20) (0,0) -- (20,0);
\end{scope}
\begin{scope}[shift={(v2)}]
\draw[convexborder,dashdotted] (0,0) -- (-20,-20) (0,0) -- (0,20) (0,0) -- (20,0);
\end{scope}
\begin{scope}[shift={(v3)}]
\draw[convexborder,dashdotdotted] (0,0) -- (-20,-20) (0,0) -- (0,20) (0,0) -- (20,0);
\end{scope}
\begin{scope}[shift={(v4)}]
\draw[convexborder,dashdotdotdotted] (0,0) -- (-20,-20) (0,0) -- (0,20) (0,0) -- (20,0);
\end{scope}
\end{scope}

\fill[convex] (v1) -- (v12) -- (v2) -- (v23) -- (v3) -- (v34) -- (v4) -- (v41) -- cycle;

\node[font=\small,rotate=45] at (0,4.5) {$(\emptyset,\{1,2,3,4\},\emptyset)$};
\node[font=\small,rotate=90] at (1.5,6.2) {$(\{1\},\{2,3,4\},\emptyset)$};
\node[font=\small,rotate=90] at (2.5,6.2) {$(\{1,4\},\{2,3\},\emptyset)$};
\node[font=\small] at (4.5,6) {$(\{1,2,4\},\{3\},\emptyset)$};
\node[font=\small] at (7.5,6) {$(\{1,2,3,4\},\emptyset,\emptyset)$};
\node[font=\small,rotate=45] at (0,2.5) {$(\emptyset,\{1,2,3\},\{4\})$};
\node[font=\small,rotate=45] at (2,4) {$(\{1\},\{2,3\},\{4\})$};
\node[font=\small] at (4.5,4) {$(\{1,2\},\{3\},\{4\})$};
\node[font=\small] at (7.5,4) {$(\{1,2,3\},\emptyset,\{4\})$};
\node[font=\small] at (4.5,2) {$(\{2\},\{3\},\{1,4\})$};
\node[font=\small] at (7.5,2.5) {$(\{2,3\},\emptyset,\{1,4\})$};
\node[font=\small] at (7.5,1.5) {$(\{2\},\emptyset,\{1,3,4\})$};
\node[font=\small] at (1.5,1.5) {$(\emptyset,\{2,3\},\{1,4\})$};
\node[font=\small,rotate=45] at (3,0) {$(\emptyset,\{3\},\{1,2,4\})$};
\node[font=\small] at (6.5,0) {$(\emptyset,\emptyset,\{1,2,3,4\})$};

\filldraw[point] (v1) circle (2.5pt) node[below right=1pt] {$v^1$};
\filldraw[point] (v2) circle (2.5pt) node[above left=1pt] {$v^2$};
\filldraw[point] (v3) circle (2.5pt) node[below right=1pt] {$v^3$};
\filldraw[point] (v4) circle (2.5pt) node[below=3pt] {$v^4$};
\filldraw[pseudovIJ] (v12) circle (2.5pt) node[below left=1pt] {$a^1$};
\filldraw[pseudovIJ] (v23) circle (2.5pt) node[below right=1pt] {$a^2$};
\filldraw[pseudovIJ] (v34) circle (2.5pt) node[above right=1pt] {$a^3$};
\filldraw[pseudovIJ] (v41) circle (2.5pt) node[above left=1pt] {$a^4$};
\filldraw[pseudov] (u1) circle (2.5pt) node[above right=1pt] {$u^1$};
\filldraw[pseudov] (u2) circle (2.5pt) node[above right=1pt] {$u^2$};

\end{tikzpicture}
\caption{The cell decomposition of $\tproj^2$ induced by the points $v^1, \dots, v^4$ (for the sake of readability, only the types of $2$-dimensional cells are provided). This cell decomposition has $10$ pseudovertices which are depicted in blue, green and red. The bounded cells are shaded in gray.}\label{fig:cell_decomposition}
\end{figure}

\subsection{Tropical semirings associated with totally ordered abelian groups}\label{SubSectionTG}

In Section~\ref{SectionArbitraryP} we will work with a tropical semiring whose elements represent symbolic perturbations of the elements of the max-plus semiring $\maxplus$. As the max-plus semiring, this tropical semiring arises from an ordered group. In this regard, let us recall that with a totally ordered abelian group $(G, \cdot , \preceq )$ it is possible to associate a tropical semiring which is defined as follows. Its ground set $\T(G)$ consists of the elements of $G$ and an extra element $\bot \not \in G$. The ordering and group law are extended to $\T(G)$ by setting $\bot \preceq \lambda$ and $\bot \cdot  \lambda = \lambda \cdot  \bot = \bot$ for all $\lambda \in G$. The set $\T(G)$ is equipped with the maximum (according to the order $\preceq$) as addition, and the group law $\cdot $ as multiplication. Thus, the max-plus semiring can be obtained in this way by instantiating $(G, \cdot , \preceq )$ as the group $(\R, + ,\leq)$. By abuse of notation, the operations of $\T(G)$ will be still denoted by $\mpplus$ and $\mptimes$. The tropical addition and scalar multiplication can be defined as well over elements of $(\T(G))^n$. As we did before, we introduce the projective space $\tproj^{n-1}(G)$ as the quotient of $G^n$ by the equivalence relation $\sim$ given by $x \sim y$ if $x = \lambda \mptimes y$ for some $\lambda \in G$. 

\subsection{The Hahn series field}\label{SubSectionHahnSeries}

Given a totally ordered abelian group $(G, \cdot , \preceq )$, the field $\hahn{G}$ of {\em real Hahn series with value group $(G, \cdot , \preceq )$} consists of formal power series of the form 
\[
\liftx =\sum_{\alpha \in G} x_\alpha t^\alpha \; ,
\]
where $x_\alpha \in \R$ for $\alpha \in G$ and the set $\{\alpha \in G \mid x_\alpha \neq 0 \}$, called the {\em support} of $\liftx$, is well-ordered.

The {\em valuation} $\val(\liftx)$ of a non-null Hahn series $\liftx$ is defined as $-\alpha_{\min}(\liftx)$, where $\alpha_{\min}(\liftx)$ is the least element of the support of $\liftx$. By convention, we define $\val(\liftx)$ as the null element $\bot$ of $\T(G)$ when $\liftx$ is the identically null series. The valuation map $\liftx\mapsto \val(\liftx)$ extends to vectors component-wise.

We say that a non-null Hahn series $\liftx$ is {\em positive}, and write $\liftx  >0$, when $x_{\alpha_{\min} (\liftx )}>0$. 
We write $\liftx \geq 0$ if $\liftx $ is positive or null, and $\liftx \geq \lifty$ if $\liftx - \lifty\geq 0$. 
Equipped with the sum and product of formal power series, the set $\hahn{G}$ constitutes a totally ordered field. Then, the usual theory of discrete geometry applies in $(\hahn{G})^n$, and in particular we can define polyhedral sets in the usual way. We will denote by $\conv \left( \{\liftv^r\}_{r\in \oneto{p}} \right)$ the convex hull of the points $\liftv^1,\ldots ,\liftv^p\in (\hahn{G})^n$, and by $\cone \left( \{\liftv^r\}_{r\in \oneto{p}} \right)$ their conic hull. 

Observe that, for all $\liftx, \lifty \in \hahn{G}$, we have 
\[
\begin{aligned}
\val(\liftx \utimes \lifty)& =\val(\liftx )\mptimes \val(\lift{y}) \\
\val(\liftx +\lifty ) & \leq \val(\liftx )\oplus \val(\lifty )
\end{aligned}
\]
and that the inequality above turns into an equality in particular when $\liftx$ and $\lifty$ are positive. 

For the sake of simplicity, in what follows we denote by $\K$ the field of real Hahn series with value group $(\R, + ,\leq)$, and by $\K^+$ the set of its positive elements. The field of Puiseux series with real coefficients is obviously a subfield of $\K$.

\subsection{Lifts of tropical polytopes}\label{SubSectionLifts}

As we mentioned in the introduction, tropical polytopes are images under the valuation map of ordinary polytopes over the Hahn series field:

\begin{proposition}[{\cite[Proposition 2.1]{DevelinYu}}]\label{Prop2.1DevelinYu}
Without loss of generality, assume $v^r_1=0$ for all $r\in \oneto{p}$. For each $r\in \oneto{p}$, let $\liftv^r \in (\K^+)^n$ be such that $\val (\liftv^r)=v^r$. Then, $\PP=\val \left( \conv \left( \{\liftv^r\}_{r\in \oneto{p}} \right) \right)$.
\end{proposition}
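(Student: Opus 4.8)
The plan is to establish the two inclusions $\val\bigl(\conv(\{\liftv^r\}_{r\in\oneto{p}})\bigr)\subseteq\PP$ and $\PP\subseteq\val\bigl(\conv(\{\liftv^r\}_{r\in\oneto{p}})\bigr)$, regarding both sides inside $\tproj^{n-1}$. The key tool will be a formula for the valuation of a convex combination of the lifts $\liftv^r$. Given $\liftx\in\conv(\{\liftv^r\}_{r\in\oneto{p}})$, I would write $\liftx=\sum_{r\in\oneto{p}}\mu_r\liftv^r$ with $\mu_r\in\K$, $\mu_r\geq 0$ and $\sum_{r\in\oneto{p}}\mu_r=1$, and set $R\mydef\{r\in\oneto{p}\mid\mu_r>0\}$, which is nonempty since the $\mu_r$ sum to~$1$. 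For each $i\in\oneto{n}$ the coordinate $\liftx_i=\sum_{r\in R}\mu_r\liftv^r_i$ is a sum of positive elements of $\K$, hence positive; so, using the properties of $\val$ recalled before the statement (multiplicativity, and $\val(\liftx+\lifty)=\val(\liftx)\oplus\val(\lifty)$ for positive $\liftx,\lifty$), one gets $\val(\liftx)=\mpplus_{r\in R}\val(\mu_r)\mptimes v^r$, that is, $\val(\liftx)=\mpplus_{r\in R}\lambda_r\mptimes v^r$ with $\lambda_r\mydef\val(\mu_r)\in\R$. (Note that $\val(\liftx_1)=\max_{r}\val(\mu_r)=\val(\sum_{r}\mu_r)=\val(1)=0$, so that the normalization $v^r_1=0$ makes the two sides honestly comparable.)

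For the inclusion $\val\bigl(\conv(\{\liftv^r\}_{r\in\oneto{p}})\bigr)\subseteq\PP$, it then remains to check that a tropical combination indexed by a subset $R$ of $\oneto{p}$ is still a point of $\PP$. This is clear when $R=\oneto{p}$; otherwise I would extend it to a combination indexed by all of $\oneto{p}$ by assigning $\lambda_r\mydef -M$ to each $r\in\oneto{p}\setminus R$, and then use that the coordinates of $v^1,\dots,v^p$ are finite, hence bounded, together with $R\neq\emptyset$, to see that for $M$ large enough $\max_{r\notin R}(-M+v^r_i)\leq\max_{r\in R}(\lambda_r+v^r_i)$ holds for all $i\in\oneto{n}$; consequently $\mpplus_{r\in\oneto{p}}\lambda_r\mptimes v^r=\mpplus_{r\in R}\lambda_r\mptimes v^r=\val(\liftx)$, which therefore lies in $\PP$. (Alternatively, one may invoke that $\PP$ is closed, being a finite union of closed cells, and that $\val(\liftx)$ is a limit of genuine tropical combinations.)

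For the reverse inclusion, given $x=\mpplus_{r\in\oneto{p}}\lambda_r\mptimes v^r\in\PP$ with $\lambda_r\in\R$, I would lift the tropical coefficients by $\nu_r\mydef t^{-\lambda_r}\in\K^+$, so that $\val(\nu_r)=\lambda_r$, set $s\mydef\sum_{r\in\oneto{p}}\nu_r\in\K^+$ and $\mu_r\mydef\nu_r/s$, so that $\mu_r\geq 0$ and $\sum_{r\in\oneto{p}}\mu_r=1$, and consider $\liftx\mydef\sum_{r\in\oneto{p}}\mu_r\liftv^r\in\conv(\{\liftv^r\}_{r\in\oneto{p}})$. The formula above (all $\mu_r$ now being positive), combined with multiplicativity of $\val$ (which gives $\val(\mu_r)=\lambda_r-\val(s)$), then yields $\val(\liftx)=(-\val(s))\mptimes\bigl(\mpplus_{r\in\oneto{p}}\lambda_r\mptimes v^r\bigr)=(-\val(s))\mptimes x$, which equals $x$ in $\tproj^{n-1}$. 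Hence $x\in\val\bigl(\conv(\{\liftv^r\}_{r\in\oneto{p}})\bigr)$, and the two inclusions together give the claim.

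I expect the only step that is not routine to be the one, in the first inclusion, handling the generators that receive coefficient $\mu_r=0$: one must argue that the resulting tropical combination over the proper subset $R$ still belongs to $\PP$, which is exactly where the boundedness of the coordinates of the $v^r$ is used. The subtlety to keep in mind elsewhere is that the valuation of a sum equals the tropical sum of the valuations only for positive series, which is precisely why the hypothesis $\liftv^r\in(\K^+)^n$ and the nonnegativity of the convex-combination coefficients are needed.
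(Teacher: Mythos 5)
Your argument is correct: the formula $\val(\liftx)=\mpplus_{r\in R}\val(\mu_r)\mptimes v^r$ for convex combinations of positive series, together with the monomial lift $\nu_r=t^{-\lambda_r}$ for the converse, is exactly the standard tropicalization argument, and your care with the zero coefficients (extending the combination over $R$ to all of $\oneto{p}$ via a large negative scalar) closes the only non-routine step. The paper itself does not reprove this statement but cites \cite[Proposition~2.1]{DevelinYu}, whose proof proceeds along essentially the same lines, so there is nothing to flag.
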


In Proposition~\ref{Prop2.1DevelinYu} it is assumed that $v^r_1=0$ for all $r\in \oneto{p}$ only for simplicity. Alternatively, we can ignore this condition and define $\liftP=\cone \left( \{\liftv^r\}_{r\in \oneto{p}} \right)\subset \K^n$, where $\liftv^r \in (\K^+)^n$ are such that $\val (\liftv^r)=v^r$ for $r\in \oneto{p}$. Then, the conclusion of Proposition~\ref{Prop2.1DevelinYu} holds for the polyhedral cone $\liftP$, \ie\ we have $\PP=\val (\liftP)$. We will call any such cone $\liftP$ a {\em lift} of $\PP$. In~\cite{DevelinYu} a lift of $\PP$ is any polytope $\conv \left( \{\liftv^r\}_{r\in \oneto{p}} \right)$ over the Puiseux series field defined as in Proposition~\ref{Prop2.1DevelinYu}.  Observe that when the points $\liftv^r$ in  Proposition~\ref{Prop2.1DevelinYu} are taken so that $\liftv^r_1=1$ for all $r\in \oneto{p}$, then $\conv \left( \{\liftv^r\}_{r\in \oneto{p}} \right)$ is nothing but the section of $\liftP$ defined by the hyperplane $\bigl\{ \liftx \in \K^n\mid \liftx_1 =1 \bigr\}$. In consequence, the two definitions are essentially equivalent in what concerns the facial structure of the lift. 

\begin{remark}
Observe that if the points $v^1,\dots , v^p$ are in general position and $\liftv^r \in (\K^+)^n$ are such that $\val (\liftv^r)=v^r$ for $r\in \oneto{p}$, then the points  $\liftv^1,\dots , \liftv^p$ are also in general position, meaning that the determinant of every square submatrix of the matrix whose columns are the points $\liftv^1,\dots , \liftv^p$ is non-null. 
\end{remark} 

The relation between hyperplanes and half-spaces over the Hahn series field and their tropical analogues is given by the following proposition. 

\begin{proposition}[{\cite[Proposition 2.4]{DevelinYu}}]\label{Prop2.4DevelinYu}
Assume $\liftf_i\in \K$ for $i\in \oneto{n}$ are non-null. Then, under the valuation map, the image of the hyperplane $\bigl\{ \liftx \in \K^n\mid \sum_{i\in \oneto{n}} \liftf_i \utimes \liftx_i =0 \bigr\}$ is the tropical hyperplane with apex $a=(\mpinverse{\val (\liftf_1)}, \ldots ,\mpinverse{\val (\liftf_n)})$, the image of $\bigl\{ \liftx \in \K^n\mid \sum_{i\in \oneto{n}} \liftf_i \utimes \liftx_i \geq 0 \bigr\} \cap (\K^+)^n$ is the tropical half-space $\HH(a,I)$, where $I=\{i\in \oneto{n}\mid \liftf_i > 0\}$, and the image of $\bigl\{ \liftx \in \K^n\mid \sum_{i\in \oneto{n}} \liftf_i \utimes \liftx_i =0 \bigr\} \cap (\K^+)^n$ is (the signed tropical hyperplane given by) the boundary of $\HH(a,I)$. 
\end{proposition}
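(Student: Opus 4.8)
The plan is to reduce all three assertions to one elementary fact about Hahn series, and then to split each into an easy ``$\subseteq$'' inclusion (from sub-additivity of $\val$) and a ``$\supseteq$'' inclusion (by exhibiting explicit lifts). The fact I would isolate first is the following sharpening of the inequality $\val(\liftx+\lifty)\le\val(\liftx)\oplus\val(\lifty)$ recalled above: for non-null $\lift{z}_1,\dots,\lift{z}_m\in\K$, if $M=\max_k\val(\lift{z}_k)$ and $K_0$ is the set of indices attaining it, then the coefficient of $t^{-M}$ in $\sum_k\lift{z}_k$ equals the sum of the leading coefficients of the $\lift{z}_k$ with $k\in K_0$; hence $\val(\sum_k\lift{z}_k)=M$ (and $\sum_k\lift{z}_k$ has the sign of that coefficient) unless these leading coefficients cancel, and if $\sum_k\lift{z}_k=0$ with not all $\lift{z}_k$ null then $|K_0|\ge 2$ and they do cancel, so they cannot all be of the same sign. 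The bridge to the tropical side is that, for $\liftx\in(\K^+)^n$ and $x:=\val(\liftx)$, the series $\lift{z}_i:=\liftf_i\utimes\liftx_i$ has $\val(\lift{z}_i)=(-a_i)\mptimes x_i$ and a leading coefficient whose sign is that of $\liftf_i$, i.e.\ positive exactly when $i\in I$.

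Granting this, the half-space statement is short. Given $\liftx\in(\K^+)^n$ with $\sum_i\liftf_i\utimes\liftx_i\ge 0$, let $M=\max_i\bigl((-a_i)\mptimes x_i\bigr)$ with argmax $I_0$; if $I_0$ were disjoint from $I$, the coefficient of $t^{-M}$ in $\sum_i\liftf_i\utimes\liftx_i$ would be a sum of strictly negative reals, forcing $\sum_i\liftf_i\utimes\liftx_i<0$, a contradiction, so $I_0\cap I\ne\emptyset$, which is precisely $x\in\HH(a,I)$. Conversely, for $x\in\HH(a,I)$ choose $i_0\in I$ attaining $M$, set $\liftx_i=t^{-x_i}$ for $i\ne i_0$ and $\liftx_{i_0}=\lambda\,t^{-x_{i_0}}$; for $\lambda>0$ large the $t^{-M}$-coefficient of $\sum_i\liftf_i\utimes\liftx_i$ is strictly positive, so this positive lift of $x$ lies in the half-space. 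The hyperplane statement is the same with ``$=0$'' and no positivity: $\sum_i\liftf_i\utimes\liftx_i=0$ forces $M$ to be attained at least twice, i.e.\ $x$ lies on the tropical hyperplane with apex $a$; conversely, given such an $x$, pick two indices $i_1,i_2$ attaining $M$, fix $\liftx_i=t^{-x_i}$ for $i\notin\{i_1,i_2\}$ and $\liftx_{i_1}=\lambda\,t^{-x_{i_1}}$, and solve $\liftf_{i_1}\utimes\liftx_{i_1}+\liftf_{i_2}\utimes\liftx_{i_2}=-\sum_{i\notin\{i_1,i_2\}}\liftf_i\utimes t^{-x_i}$ for $\liftx_{i_2}$ over the field $\K$; for all but at most one nonzero value of $\lambda$ one has $\val(\liftx_{i_2})=x_{i_2}$, which gives the desired lift.

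Finally, the boundary statement combines the two. For ``$\subseteq$'', if $\liftx\in(\K^+)^n$ and $\sum_i\liftf_i\utimes\liftx_i=0$, apply the half-space argument both to $\sum_i\liftf_i\utimes\liftx_i\ge 0$ and to $\sum_i(-\liftf_i)\utimes\liftx_i\ge 0$ (the latter replacing $I$ by its complement, with the same apex since $\val(-\liftf_i)=\val(\liftf_i)$) to get $\max_{i\in I}\bigl((-a_i)\mptimes x_i\bigr)=\max_{j\notin I}\bigl((-a_j)\mptimes x_j\bigr)$, i.e.\ $x$ is on the boundary of $\HH(a,I)$. For ``$\supseteq$'', given such an $x$, pick $i_0\in I$ and $j_0\notin I$ both attaining $M$, set $\liftx_i=t^{-x_i}$ for $i\notin\{i_0,j_0\}$ and $\liftx_{i_0}=\lambda\,t^{-x_{i_0}}$ with $\lambda>0$, and solve $\liftf_{j_0}\utimes\liftx_{j_0}=-\sum_{i\ne j_0}\liftf_i\utimes\liftx_i$ for $\liftx_{j_0}$; for $\lambda$ large the series $\sum_{i\ne j_0}\liftf_i\utimes\liftx_i$ has strictly positive leading coefficient, whence $\liftx_{j_0}>0$ (because $\liftf_{j_0}<0$) and $\val(\liftx_{j_0})=x_{j_0}$, completing a positive lift of $x$ on the signed hyperplane. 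I expect the only real work to be this bookkeeping in the ``$\supseteq$'' parts — arranging each rescaling to preserve the prescribed valuation while simultaneously forcing the required sign — but in every case the forbidden values of $\lambda$ form a finite set or a half-line, so a suitable $\lambda$ always exists. Throughout I would restrict to points with all coordinates finite, which is the regime where the tropical objects on the right-hand sides live.
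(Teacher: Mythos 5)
Your proposal is correct. Note that the paper does not prove this statement at all: it is quoted verbatim from Develin and Yu, and only later (Section~\ref{SectionArbitraryP}) does the paper comment on which ingredients of that proof must survive the passage from $\K$ to $\Kp$. Your argument — reduce everything to the observation that the coefficient of $t^{-M}$ in a sum of series is the sum of the leading coefficients of the summands whose valuation attains the maximum $M$, so that the ``$\subseteq$'' inclusions follow from a cancellation/sign argument and the ``$\supseteq$'' inclusions from explicit monomial lifts with one coordinate rescaled by a generic or large $\lambda$ (or solved for inside $\K$) — is essentially the standard proof of the cited result, and all the bookkeeping you flag (valuations preserved, signs forced, only finitely many or a half-line of bad $\lambda$) checks out. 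The only cosmetic point worth adding is that the half-space and boundary assertions implicitly assume the $\liftf_i$ are not all of the same sign, so that $I$ and its complement form a non-trivial partition and $\HH(a,I)$ and its boundary are well-defined; in the degenerate case the intersection with $(\K^+)^n$ in the third assertion is empty, consistently with an empty boundary.
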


Observe that in order to obtain a tropical half-space (resp.~signed hyperplane), it is necessary to consider only the images of the points of the half-space (resp.~hyperplane) which belong to $(\K^+)^n$. 
For the sake of simplicity, in this paper we refer to the image (under the valuation map) of the intersection of a half-space with $(\K^+)^n$ simply as the \emph{image of the half-space}.  

\section{Properties of pure polytopes}\label{SectionPropPure}

\subsection{Extreme points}

A point $v\in \PP$ is said to be {\em extreme} in the tropical polytope $\PP$ if $v=x\mpplus y$ with $x,y\in \PP$ implies $v=x$ or $v=y$. Thus, the set of extreme points of $\PP$ is a subset of its set of generators $\{ v^r \}_{r\in \oneto{p}}$. 
We recall the following characterization of extreme points. 

\begin{proposition}[{\cite{GK,BSS}}]\label{CharactExtP}
The point $v\in \tproj^{n-1}$ is extreme in the tropical polytope $\PP$ if and only if there exists $i \in \oneto{n}$ such that $v$ is the only point of $\PP$ contained in the sector $\SS(v,i)$, or, equivalently, $v$ is a minimal element of the set $\bigl\{ z\in \maxplus^n \mid  z\in \PP , \, z_i = v_i \bigr\}$ w.r.t.\ the coordinate-wise ordering $\leq$. 
\end{proposition}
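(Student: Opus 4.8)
The plan is to argue directly from the definition of an extreme point, after the preliminary remark that it propagates to finite tropical sums: since $\PP$ is tropically convex, any tropical sum $\bigoplus_{k} x^k$ of points of $\PP$ again lies in $\PP$, so if $v$ is extreme and $v=\bigoplus_k x^k$ with $x^k\in\PP$, a trivial induction on the number of terms gives $v=x^k$ for some $k$. I would also record two elementary facts about sectors: that $v\in\SS(v,i)$ for every $i$; and that, after rescaling a representative of $z$ so that $z_i=v_i$, one has $z\in\SS(v,i)$ iff $z\le v$ coordinatewise. The second fact immediately settles the ``or equivalently'' clause: normalizing the $i$-th coordinate to $v_i$ identifies the points of $\PP$ lying in $\SS(v,i)$ with the elements $z\in\PP$ satisfying $z_i=v_i$ and $z\le v$, so $v$ is the unique point of $\PP$ in $\SS(v,i)$ precisely when $v$ is a minimal element of $\{z\in\maxplus^n\mid z\in\PP,\ z_i=v_i\}$.

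For the implication ``sector condition $\Rightarrow$ extreme'', suppose $v$ is the only point of $\PP$ in $\SS(v,i)$ and write $v=(\lambda\mptimes x)\mpplus(\mu\mptimes y)$ with $x,y\in\PP$, for a coherent choice of representatives. Comparing $i$-th coordinates, one of the two terms, say $\lambda\mptimes x$, realizes $v_i$; since $\lambda\mptimes x\le v$ in every coordinate with equality in coordinate $i$, the point $x$ lies in $\SS(v,i)$, hence $x=v$, so $v$ is extreme. Conversely, I would prove ``$v$ extreme $\Rightarrow$ sector condition'' by contraposition: if for every $i\in\oneto{n}$ there is $z^i\in\PP$ with $z^i\in\SS(v,i)$ and $z^i\ne v$, rescale each $z^i$ so that $z^i_i=v_i$; then $z^i\le v$ by the second preliminary fact, the tropical sum $w=\bigoplus_{i\in\oneto{n}}z^i$ lies in $\PP$ and satisfies $w\le v$ (each summand does) as well as $w\ge v$ (coordinate $k$ being witnessed by $z^k$), so $w=v$. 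This exhibits $v$ as a finite tropical sum of points of $\PP$ none of which equals $v$, contradicting extremeness via the first paragraph; hence some $i$ has $\PP\cap\SS(v,i)=\{v\}$.

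The real work here is bookkeeping in the projective setting rather than any deep idea: the statements ``$v=x\mpplus y$'' and ``$z\in\SS(v,i)$'' concern points of $\tproj^{n-1}$, so one must fix representatives in $\R^n$ coherently before making the coordinatewise comparisons above, and check that the various rescalings are legitimate. The only other point worth spelling out carefully is the passage from the binary definition of extremeness to arbitrary finite tropical sums, which is the straightforward induction mentioned at the outset.
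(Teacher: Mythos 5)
Your argument is correct and complete: the normalization $z_i=v_i$ identifying $\PP\cap\SS(v,i)$ with $\{z\in\PP\mid z_i=v_i,\ z\le v\}$, the extraction of the coordinate-attaining term for the ``sector $\Rightarrow$ extreme'' direction, and the tropical sum $w=\bigoplus_i z^i$ of witnesses for the contrapositive of the other direction are exactly the standard ingredients, and the induction from binary to finite tropical sums plus the representative bookkeeping are handled properly. Note that the paper itself gives no proof of this proposition (it is quoted from \cite{GK,BSS}), so there is nothing internal to compare against; your proof is in line with the classical arguments in those references.
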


In view of Proposition~\ref{CharactExtP}, if $v\in \tproj^{n-1}$ is the only point of $\PP$ contained in the sector $\SS(v,i)$, then $v$ is said to be an \emph{extreme point of $\PP$ of type $i$}. We refer to Figure~\ref{fig:running_example} for an illustration.

\begin{remark}\label{RemarkNeighborhood}  
Note that for any $x \in \PP$ and $i \in \oneto{n}$ there exists an extreme point $v$ of $\PP$ of type $i$ such that $v \in \SS(x,i)$. This follows from the fact that the set $\bigl\{ z\in \maxplus^n \mid z_i =x_i, \, z_j \leq x_j \ \text{for all} \ j\in \oneto{n}\bigr\}\cap \PP$ is compact and non-empty, and so it has a minimal element w.r.t.\ $\leq$. 

In consequence, if $v$ is an extreme point of $\PP$ of type $i$, then there exists a  (open) neighborhood~$\NN$ of $v$ such that $v \in \SS(x,i)$ for all $x \in \PP \cap \NN$. To see this, assume the contrary. Let $(x^k)_k$ be a sequence of points of $\PP$ converging to $v$ and such that $v \not \in \SS(x^k,i)$ for all $k$. Recall that $\PP$ has a finite number of extreme points. Thus, by first part of the remark, there exists an extreme point $u$ of $\PP$ of type $i$ verifying $u \in \SS(x^k,i)$ for infinitely many $k$. As a result, $u \neq v$ and $u \in \SS(v,i)$, contradicting the fact that $v$ is extreme of type $i$. 
\end{remark}

In general, an extreme point can be of several types (for example, the point $v^1$ of the polytope depicted on the left-hand side of Figure~\ref{fig:non-pure} below is extreme of types $1$ and $3$). The next lemma shows that the uniqueness of the extremality type characterizes pure polytopes, a property which will play an important role in our proof of Theorem~\ref{ConjDevelinYu}. 

\begin{lemma}\label{LemmaExtTypePure}
The tropical polytope $\PP$ is pure if, and only if, all its extreme points have a unique extremality type. 
\end{lemma}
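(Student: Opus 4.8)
The forward implication, ``non-unique type $\Rightarrow$ not pure'', is direct. Suppose $v \in \PP$ is extreme of two distinct types $i$ and $k$. By the second part of Remark~\ref{RemarkNeighborhood} there is an open neighborhood $\NN$ of $v$ such that $v \in \SS(x,i) \cap \SS(x,k)$ for every $x \in \PP \cap \NN$. Reading the membership $v \in \SS(x,i)$ with the index $j = k$, and $v \in \SS(x,k)$ with $j = i$, yields $v_i - x_i \ge v_k - x_k$ and $v_k - x_k \ge v_i - x_i$, whence $x_i - x_k = v_i - v_k$ for all $x \in \PP \cap \NN$. Thus $\PP \cap \NN$ lies in a hyperplane of $\tproj^{n-1}$ of the form $\{z : z_i - z_k = \text{const}\}$ and so has empty interior; since $v \in \PP$, this shows $v \notin \overline{\interior{\PP}}$, i.e.\ $\PP$ is not pure.

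For the converse, assume every extreme point of $\PP$ has a unique extremality type; we prove $\PP \subseteq \overline{\interior{\PP}}$, which together with $\overline{\interior{\PP}} \subseteq \PP$ gives purity. Fix $x \in \PP$, and normalise representatives so as to regard it as an element of $\maxplus^n$. For each $i \in \oneto{n}$, apply the first part of Remark~\ref{RemarkNeighborhood} to choose a minimal element $w^i$ of the nonempty compact set $\{z \in \maxplus^n : z \in \PP,\ z_i = x_i,\ z \le x\}$. By Proposition~\ref{CharactExtP}, $w^i$ is an extreme point of $\PP$ of type $i$, hence of type exactly $\{i\}$ by hypothesis; it satisfies $w^i \le x$ with $w^i_i = x_i$; and the $w^i$ are pairwise distinct, since $w^i = w^j$ with $i \ne j$ would be an extreme point of two distinct types. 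From $w^i \le x$ and $w^i_i = x_i$ one reads off at once that $x = \bigoplus_{i \in \oneto{n}} w^i$, i.e.\ $x$ is the tropical sum of the $w^i$.

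Consider the (possibly empty) open polyhedron $C \subseteq \R^n$ cut out by the strict inequalities $\mu_j - \mu_i > w^i_j - w^j_j$ for all $i \ne j$, and for $\mu \in \R^n$ put $y_\mu := \bigoplus_{i} \mu_i \mptimes w^i$. A short computation shows that $\mu \in C$ if and only if $y_\mu$ lies in the (relative, hence topological) interior of the cell with type $(\{1\},\dots,\{n\})$ of the natural cell decomposition of $\tproj^{n-1}$ induced by $w^1,\dots,w^n$; when $C \ne \emptyset$ this cell is full-dimensional, so such points $y_\mu$ belong to $\interior{\bigl(\tconv(\{w^1,\dots,w^n\})\bigr)}$, which by tropical convexity of $\PP$ is contained in $\interior{\PP}$. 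Moreover $\mu = 0$ satisfies the corresponding non-strict inequalities, because $w^i_j \le x_j = w^j_j$; hence $0 \in \overline{C}$, and choosing any sequence $\mu^{(\ell)} \in C$ with $\mu^{(\ell)} \to 0$ we obtain $y_{\mu^{(\ell)}} \in \interior{\PP}$ with $y_{\mu^{(\ell)}} \to \bigoplus_i w^i = x$. This gives $x \in \overline{\interior{\PP}}$, provided $C \ne \emptyset$.

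Everything therefore reduces to showing $C \ne \emptyset$; equivalently, that the $n \times n$ matrix $W$ whose columns are $w^1,\dots,w^n$ is tropically non-singular (feasibility of the difference system defining $C$ being, by the standard negative-cycle criterion, exactly the condition that $\tper(W)$ be attained only at the identity permutation). This is the one nontrivial step, and the one where uniqueness of extremality types is genuinely used; I expect it to be the main obstacle. Since $w^i \in \SS(x,i)$ with $w^i \le x$ and $w^i_i = x_i$, the identity permutation does attain $\tper(W)$. If some other permutation $\pi$ also attained it, expanding $\tper(W) = \sum_i w^i_i = \sum_i w^i_{\pi^{-1}(i)}$ and using $w^i_{\pi^{-1}(i)} \le x_{\pi^{-1}(i)}$ forces $w^i_{\pi^{-1}(i)} = x_{\pi^{-1}(i)}$ for all $i$; taking a nontrivial cycle $i_1 \to \dots \to i_r \to i_1$ of $\pi^{-1}$ this reads $w^{i_\ell}_{i_{\ell+1}} = x_{i_{\ell+1}}$, so $w^{i_1}$ coincides with $x$ in the two distinct coordinates $i_1$ and $i_2$. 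Combining this coincidence with the sector characterisation of extreme points (Proposition~\ref{CharactExtP}), the equivalence~\eqref{PropertySectors}, and the minimality built into the choice of the $w^i$, one then argues that $w^{i_1}$ must also be an extreme point of type $i_2 \ne i_1$ --- contradicting the uniqueness of its type. Hence $W$ is tropically non-singular, $C \ne \emptyset$, and the proof is complete. The delicate point is precisely this last deduction, that a generator agreeing with $x$ in two coordinates necessarily carries a second extremality type.
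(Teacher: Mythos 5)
Your forward implication is correct and is essentially the paper's own argument. Your converse, however, takes a genuinely different route (the paper argues by contraposition, extracting from a non-pure $\PP$ a maximal cell of the type decomposition that is not full-dimensional and producing from it an extreme point with two types), and much of your construction is sound: the minimal points $w^i$ are indeed extreme of type $i$ with $x=\mpplus_{i} w^i$, feasibility of $C$ is indeed equivalent to $\tper(W)$ being attained only at the identity, and the limiting argument $y_{\mu^{(\ell)}}\to x$ correctly yields $x\in\overline{\interior{\PP}}$ once $C\neq\emptyset$. But the one step carrying all the weight --- tropical non-singularity of $W$ --- is not proved: you defer it, and the bridging claim you propose (that a point $w^{i_1}$ agreeing with $x$ in the two coordinates $i_1$ and $i_2$ must be extreme of type $i_2$) is false. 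Take $\PP=\tconv\left(\{(0,0,0),(0,1,0),(0,0,1)\}\right)$, the pure polytope of Figure~\ref{fig:perturbation}(a), whose generators are extreme of types $1,2,3$ respectively, and take $x=(0,0,1)$. The minimal element of $\{z\in\PP \mid z_1=x_1,\ z\le x\}$ is $w^1=(0,0,0)$, which also satisfies $w^1_2=x_2$; yet $w^1$ is extreme of type $1$ only, since $(0,1,0)\in\SS(w^1,2)\cap\PP$ and $(0,0,1)\in\SS(w^1,3)\cap\PP$.

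So the step you flag as ``delicate'' is not merely unproved, it is wrong as stated. Any repair must exploit the full cyclic pattern of equalities $w^{i_\ell}_{i_{\ell+1}}=x_{i_{\ell+1}}$ along the whole nontrivial cycle --- exactly the information that your reduction to the single point $w^{i_1}$ discards --- and this is precisely where the substance of the lemma lies. (In the example above the permanent is in fact uniquely attained at the identity, so it does not refute your non-singularity claim itself, only the lemma you propose to deduce it from.) As it stands, the converse direction has a genuine gap at its central step.
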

 
\begin{proof}
We prove the statement by contraposition. First, let $v$ be an extreme point of $\PP$, and suppose that it is extreme of types $i$ and $j$, with $i \neq j$. By Remark~\ref{RemarkNeighborhood}, we know that there exists a (open) neighborhood $\NN$ of $v$ such that $v \in \SS(x,i)$ and $v \in \SS(x,j)$ for all $x \in \PP \cap \NN$. It follows that $(\mpinverse{v}_i) \mptimes x_i = (\mpinverse{v}_j) \mptimes x_j$ for all $x \in \PP \cap \NN$, and so $\PP \cap \NN$ has an empty interior. Since $\NN$ is open, the interior of $\PP \cap \NN$ is equal to the intersection of $\NN$ with the interior of $\PP$. Hence, no point of $\NN$ can be in the interior of $\PP$. Therefore, $\PP$ is not pure.

Conversely, suppose that $\PP$ is not pure. In this case, the cell decomposition induced by the generators of $\PP$ contains a maximal cell $X_S$ which is not full-dimensional. As $X_S$ is not full-dimensional, by~\cite[Proposition~17]{DS} we know that $S_i \cap S_j \neq \emptyset$ for two distinct indexes $i,j \in \oneto{n}$. By~\eqref{EqCell}, we deduce that the quantity $(\mpinverse{z_i}) \mptimes z_j$ is constant when $z$ ranges over $X_S$. Let $x$ be a point in the relative interior of $X_S$. Since the cell $X_S$ is maximal, there exists a neighborhood $\NN$ of $x$ such that $ (\mpinverse{z_i}) \mptimes z_j= (\mpinverse{x_i}) \mptimes x_j$ for all $z \in \PP \cap \NN$. 

We claim that $\PP \cap \SS(x,i) = \PP \cap \SS(x,j)$. To see this, consider $y \in \PP\cap \SS(x,i)$. By definition, we have $(\mpinverse{x_k}) \mptimes y_k \leq (\mpinverse{x_i}) \mptimes y_i$ for all $k \in \oneto{n}$, and it suffices to show that the equality holds for $k = j$ to ensure $y \in \SS(x,j)$. If the inequality was strict for $k = j$, consider the point $z = x \mpplus (\epsilon \mptimes x_i \mptimes (\mpinverse{y_i})) \mptimes y$, where $x_j\mptimes (\mpinverse{x_i}) \mptimes y_i \mptimes (\mpinverse{y_j})>\epsilon > 0$. Then, $z$ would belong to $\PP$ because $\PP$ is tropically convex. Besides, since $x_k \leq z_k \leq x_k + \epsilon$ for all $k \in [n]$, the point $z$ would also belong to the neighborhood $\NN$ of $x$, provided that $\epsilon$ is small enough. However, $z_i = x_i + \epsilon > x_i$ and $z_j = x_j$, which would contradict the fact that $ (\mpinverse{z_i}) \mptimes z_j= (\mpinverse{x_i}) \mptimes x_j$ for all $z \in \PP \cap \NN$. This proves the inclusion $\PP \cap \SS(x,i) \subset \PP \cap \SS(x,j)$. The other inclusion follows from the symmetry between $i$ and $j$.

Let $\QQ$ be the tropical polytope given by $\PP \cap \SS(x,i) = \PP \cap \SS(x,j)$. Observe that for all $y \in \QQ$, we have $(\mpinverse{y_i}) \mptimes y_j = (\mpinverse{x_i}) \mptimes x_j$. Consequently, any extreme point of $\QQ$ of type $i$ is also extreme of type $j$, and vice versa. Let $v \in \QQ$ be such a point (which exists by Remark~\ref{RemarkNeighborhood}, as $\QQ$ is not empty). To complete the proof, it is enough to show that $v$ is also an extreme point of $\PP$ of types $i$ and $j$. By~\eqref{PropertySectors} we have $\SS(v,i) \subset \SS(x,i)$ and $\SS(v,j) \subset \SS(x,j)$. Therefore, the sets $\PP \cap \SS(v,i) = \QQ \cap \SS(v,i)$ and $\PP \cap \SS(v,j) = \QQ \cap \SS(v,j)$ are both reduced to the singleton $\{v\}$, which shows the desired property. 
\end{proof}

\subsection{External representations and $(I,j)$-pseudovertices}\label{SectionNonredundant}

An external representation of the tropical polytope $\PP$ is any finite set of tropical half-spaces whose intersection is $\PP$. One way of proving Theorem~\ref{ConjDevelinYu} is showing that the set of images under the valuation map of the facet-defining half-spaces of a lift contains a specific external representation of $\PP$. In this section, we build such an external representation of $\PP$, based on the study of the non-redundant representations in~\cite{AllamigeonKatzJCTA2013}.

An external representation of a tropical polytope is said to be {\em non-redundant} when the elimination of any of its half-spaces does not lead to another external representation. We restrict our attention to external representations composed of half-spaces whose apices belong to the polytope. Under this assumption, it was shown in~\cite{AllamigeonKatzJCTA2013} that any tropical polytope has an essentially unique non-redundant external representation. In more details, all non-redundant representations precisely involve the same set of apices, which are called the {\em non-redundant apices} of the polytope. The next theorem provides a characterization of a superset of the set of these apices:

\begin{theorem}[{\cite[Theorem~43]{AllamigeonKatzJCTA2013}}]\label{ThNonRedundantApex}
Let $a$ be a non-redundant apex of $\PP$. Then, there exist a non-empty proper subset $I$ of $\oneto{n}$ and $j\in \oneto{n}\setminus I$ such that the following properties hold:
\begin{enumerate}[label=(\roman*),ref=\roman*]
\item\label{ThNonRedundantApexC1} $\bigcup_{i\in I} S_i(a)=\oneto{p}$; 
\item\label{ThNonRedundantApexC2} for each $k\in \oneto{n} \setminus I$ there exists $i\in I$ such that $S_i(a)\cap S_k(a) \neq \emptyset$; 
\item\label{ThNonRedundantApexC3} $S_i(a)\cap S_j(a) \not \subseteq \bigcup_{h\in I\setminus \{i \} } S_h(a)$ for all $i\in I$.   
\end{enumerate}
A point $a\in \tproj^{n-1}$ satisfying the three properties above is a pseudovertex of the cell decomposition induced by the generators of $\PP$ which will be called  a \emph{$(I,j)$-pseudovertex} of $\PP$.
\end{theorem}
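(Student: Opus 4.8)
Recall from Section~\ref{SectionNonredundant} that, since $\PP$ has an essentially unique non-redundant external representation, $a$ being a non-redundant apex means precisely that $a$ occurs in \emph{every} non-redundant external representation of $\PP$ by half-spaces with apices in $\PP$. The plan is to fix one such representation $\PP=\bigcap_{l\in L}\HH(a^l,I^l)$ with $a=a^{l_0}$, set $\QQ\mydef\bigcap_{l\neq l_0}\HH(a^l,I^l)$, and extract the data $(I,j)$ from it. Non-redundancy gives $\QQ\supsetneq\PP$, hence a point $x^*\in\QQ\setminus\PP$; since $\PP=\QQ\cap\HH(a,I^{l_0})$, this point is not in $\HH(a,I^{l_0})=\bigcup_{i\in I^{l_0}}\SS(a,i)$, so it lies in some sector $\SS(a,j)$ with $j\in\oneto{n}\setminus I^{l_0}$. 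I will take $I$ to be $I^{l_0}$, possibly enlarged by the "empty" sectors of $a$ (see below), together with this $j$; the goal is to check~\eqref{ThNonRedundantApexC1}--\eqref{ThNonRedundantApexC3} and then that $\{a\}$ is a cell.

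The first two conditions are combinatorial translations of geometric facts. Because tropical half-spaces are tropically convex and $\PP=\tconv(\{v^r\}_{r\in\oneto{p}})$, the inclusion $\PP\subseteq\HH(a,I)$ holds iff each $v^r$ lies in some $\SS(a,i)$ with $i\in I$, i.e.\ iff $\bigcup_{i\in I}S_i(a)=\oneto{p}$; this is~\eqref{ThNonRedundantApexC1}. If~\eqref{ThNonRedundantApexC2} failed for some $k\notin I$, so $S_i(a)\cap S_k(a)=\emptyset$ for all $i\in I$, then each generator lying in $\SS(a,k)$ would (by $\PP\subseteq\HH(a,I)$) also lie in some $\SS(a,i)$, $i\in I$, hence in $S_i(a)\cap S_k(a)$ --- forcing $S_k(a)=\emptyset$. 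So only sectors of $a$ containing no generator can obstruct~\eqref{ThNonRedundantApexC2}, and I would remove this obstruction by taking $I$ inclusion-maximal among supersets of $I^{l_0}$ for which $\QQ\cap\HH(a,I)=\PP$ still holds; checking that this restores~\eqref{ThNonRedundantApexC2} while preserving~\eqref{ThNonRedundantApexC1} and the identity $\PP=\QQ\cap\HH(a,I)$ is a routine, if somewhat fiddly, verification that I would carry out using the polyhedral description~\eqref{EqCell} of types.

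The heart of the argument is~\eqref{ThNonRedundantApexC3}, which I would prove by perturbing the apex. Fix $i_0\in I$ and assume, for contradiction, that $S_{i_0}(a)\cap S_j(a)\subseteq\bigcup_{h\in I\setminus\{i_0\}}S_h(a)$. Let $a'$ be obtained from $a$ by increasing its $i_0$-th coordinate by a small $\epsilon>0$; this strengthens the defining inequality of $\HH(\cdot,I)$ in the $i_0$-th coordinate, so $\HH(a',I)\subseteq\HH(a,I)$ and therefore $\QQ\cap\HH(a',I)\subseteq\QQ\cap\HH(a,I)=\PP$. For the reverse inclusion one checks, using~\eqref{EqCell}, that a generator $v^r$ can leave $\HH(a',I)$ (for $\epsilon$ small) only if $r\in S_{i_0}(a)$, $r\notin\bigcup_{h\in I\setminus\{i_0\}}S_h(a)$, and $v^r$ lies on the common boundary of $\SS(a,i_0)$ with $\SS(a,k)$ for some $k\notin I$, i.e.\ only if $r\in S_{i_0}(a)\cap\bigl(\bigcup_{k\notin I}S_k(a)\bigr)\setminus\bigcup_{h\in I\setminus\{i_0\}}S_h(a)$. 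If this set were empty, then $\PP=\QQ\cap\HH(a',I)$ would be an external representation in which $a$ is replaced by $a'\neq a$, and reducing it to a non-redundant one would produce a non-redundant representation of $\PP$ whose apices (those of $\QQ$'s representation, all $\neq a$, together with $a'\neq a$) do not include $a$ --- contradicting that $a$ is a non-redundant apex. The \textbf{main obstacle} is the mismatch between this argument, which controls the union $\bigcup_{k\notin I}S_k(a)$, and condition~\eqref{ThNonRedundantApexC3}, which refers only to the single index $j$: to close the gap I would exploit the rigidity of the chosen data --- $I$ maximal as above, $j$ dictated by the separating point $x^*$ --- together with~\eqref{ThNonRedundantApexC2}, analysing on which boundaries of $\SS(a,i_0)$ a dropped generator can lie and, if necessary, iterating the perturbation after shrinking $I$, so as to conclude that the required generator may be taken in $\SS(a,i_0)\cap\SS(a,j)$. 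A secondary technical point is to guarantee that $a'$ can be taken inside $\PP$ (so the new representation is admissible); I would achieve this by perturbing $a$ along a direction internal to the cell of the natural cell decomposition containing $a$, rather than along a coordinate axis.

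Finally, once~\eqref{ThNonRedundantApexC3} holds for every $i\in I$, each $i$ supplies a generator $v^{r}$ with $r\in S_i(a)\cap S_j(a)$ and $r\notin S_h(a)$ for all $h\in I\setminus\{i\}$; feeding the resulting equalities $(\mpinverse{a}_i)\mptimes v^{r}_i=(\mpinverse{a}_j)\mptimes v^{r}_j$ into the description~\eqref{EqCell} of the cell of the natural cell decomposition containing $a$ and carrying out a dimension count shows that this cell is the singleton $\{a\}$. Hence $a$ is a pseudovertex, and, satisfying~\eqref{ThNonRedundantApexC1}--\eqref{ThNonRedundantApexC3}, it is an $(I,j)$-pseudovertex of $\PP$.
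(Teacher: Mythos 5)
First, a point of comparison: the paper does not prove Theorem~\ref{ThNonRedundantApex} at all — it is imported verbatim from \cite[Theorem~43]{AllamigeonKatzJCTA2013} — so there is no internal proof to measure your plan against; it has to stand on its own, and as written it does not.

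The genuine gap is the one you flag yourself and never close: property~\eqref{ThNonRedundantApexC3} with a \emph{single} index $j$ that works simultaneously for all $i\in I$. Your perturbation argument, taken at face value, shows at best the following: if replacing $a$ by a nearby $a'$ (raised in the $i_0$-th coordinate) destroyed the identity $\PP=\QQ\cap\HH(a',I)$, then there is a generator $v^r$ with $r\in S_{i_0}(a)\setminus\bigcup_{h\in I\setminus\{i_0\}}S_h(a)$ lying also in $S_k(a)$ for \emph{some} $k\in\oneto{n}\setminus I$ depending on $i_0$ (and on $r$). This gives, for each $i\in I$, a statement about the union $\bigcup_{k\notin I}S_k(a)$, which is strictly weaker than~\eqref{ThNonRedundantApexC3}; the index $j$ you fix at the start (the sector of $a$ containing the witness point $x^*\in\QQ\setminus\PP$) is never connected to the dropped generators, and the proposed remedies (``exploit the rigidity'', ``iterate the perturbation after shrinking $I$'') are aspirations rather than arguments — shrinking $I$ can break~\eqref{ThNonRedundantApexC1} or the identity $\PP=\QQ\cap\HH(a,I)$, and nothing rules out that different $i\in I$ force different exceptional indices $k$. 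Since the existence of one common $j$ is precisely what the theorem asserts and what is used downstream (e.g.\ in Proposition~\ref{PropNonRedunApexPure}), this is a missing idea, not a fiddly verification. Two further steps are also left unestablished: that the perturbed apex $a'$ can be kept inside $\PP$ (needed so that the uniqueness theory of non-redundant representations with apices in the polytope applies) — your suggestion to move within the cell of the natural cell decomposition containing $a$ does not obviously yield a perturbation that increases the $i_0$-th coordinate in the required sense — and that enlarging $I$ restores~\eqref{ThNonRedundantApexC2} while preserving $\PP=\QQ\cap\HH(a,I)$. By contrast, your concluding step is sound in outline: once~\eqref{ThNonRedundantApexC2} and~\eqref{ThNonRedundantApexC3} are available, generators in $S_i(a)\cap S_j(a)$ and in $S_i(a)\cap S_k(a)$ pin down all coordinate differences on the cell containing $a$, so that cell is the singleton $\{a\}$ and $a$ is a pseudovertex.
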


The properties in Theorem~\ref{ThNonRedundantApex} have a geometric interpretation in terms of the tropical polytope~$\PP$ and the half-space $\HH(a,I)$ which can be associated with the $(I,j)$-pseudovertex $a$. On the one hand, Property~\eqref{ThNonRedundantApexC1} states that $\HH(a,I)$ contains all the generators $v^r$ for $r \in [p]$. Equivalently, this means that $\PP$ is included in $\HH(a,I)$. On the other hand, Properties~\eqref{ThNonRedundantApexC2} and~\eqref{ThNonRedundantApexC3} are related to the tightness of $\HH(a,I)$ w.r.t.\ the polytope $\PP$. More specifically, Property~\eqref{ThNonRedundantApexC2} tells us that every sector $\SS(a,k)$ not contained in $\HH(a,I)$ must contain at least one generator of $\PP$ which also belongs to a sector contained in $\HH(a,I)$. Finally, Property~\eqref{ThNonRedundantApexC3} states that every sector $\SS(a,i)$ contained in $\HH(a,I)$ must contain at least one generator of $\PP$ which also belongs to the special sector $\SS(a,j)$, but not to the other sectors composing $\HH(a,I)$. 

\begin{example}\label{ExampleNonredundantApices}
Using the results of~\cite{AllamigeonKatzJCTA2013}, it can be shown that the non-redundant apices  of the polytope $\PP$ of Figure~\ref{fig:running_example} are the points $a^1=(0,3,3)$, $a^2=(0,5,1)$, $a^3=(0,6,5)$ and $a^4=(0,1,4)$, depicted in green in Figure~\ref{fig:cell_decomposition}. Thus, $\{a^1,a^2, a^3, a^4\}$ is precisely the set of apices of the half-spaces in any non-redundant representation of $\PP$ composed of half-spaces whose apices belong to $\PP$. It follows from Theorem~\ref{ThNonRedundantApex} that these points are $(I,j)$-pseudovertices of $\PP$. For instance, we have $S(a^1)=(\{1,2\},\{2,3\},\{1,4\})$, and the properties of   Theorem~\ref{ThNonRedundantApex} are satisfied for $I=\{2,3\}$ and $j=1$. On the other hand, the points $v^1, \ldots ,v^4$, $u^1$ and $u^2$ of Figure~\ref{fig:cell_decomposition} are pseudovertices of the cell decomposition induced by the generators of $\PP$, but they are not $(I,j)$-pseudovertices of $\PP$. 
\end{example}

\begin{figure}
\begin{center}
\begin{tikzpicture}[convex/.style={draw=lightgray,fill=lightgray,fill opacity=0.7},convexborder/.style={ultra thick},vtx/.style={blue!50},
hsborder/.style={green!60!black,dashdotted,thick},
hs/.style={draw=none,pattern=north east lines,pattern color=green!60!black,fill opacity=0.5},
apex/.style={green!60!black},
shsborder/.style={orange,dashdotted,very thick},
shs/.style={draw=none,pattern=north west lines,pattern color=orange,fill opacity=0.5},
extended line/.style={shorten >=-#1,shorten <=-#1},extended line/.default=1cm,
scale=0.7,>=triangle 45
]

\begin{scope}[xshift=11.9cm]
\draw[gray!30,very thin,step=2] (-3.5,-3.5) grid (3.5,3.5);
\draw[gray!20,very thin,dashed] (-3.5,-3.5) grid (3.5,3.5);
\draw[gray!50,->] (-3.5,0) -- (3.5,0) node[color=gray!50,below,font=\scriptsize] {$x_2-x_1$};
\draw[gray!50,->] (0,-3.5) -- (0,3.5) node[color=gray!50,above,font=\scriptsize] {$x_3-x_1$};

\coordinate (v0) at (-2,2);
\coordinate (v1) at (0,0);
\coordinate (v2) at (2,-2);
\filldraw[convex] (0,0) -- (0,2) -- (2,2) -- (2,0) -- cycle;
\draw[convexborder] (v0) -- (0,2) -- (v1) -- (2,0) -- (v2) -- (2,2) -- cycle;

\filldraw[vtx] (v0) circle (3pt) node[below left=-0.5ex] {$v^1$};
\filldraw[vtx] (v1) circle (3pt) node[below left=-0.5ex] {$v^2$};
\filldraw[vtx] (v2) circle (3pt) node[below left=-0.5ex] {$v^3$};

\draw[shsborder] (-2,-3.5) -- (-2,2) -- (0,4);
\filldraw[shs] (-2,-3.5) -- (-2,2) -- (0,4) -- (0.4,4) -- (-1.7,1.876) -- (-1.7,-3.5) -- cycle;

\end{scope}

\end{tikzpicture}
\end{center}
\caption{
The non-pure tropical polytope generated by the points $v^1=(0,-1,1)$, $v^2=(0,0,0)$ and $v^3=(0,1,-1)$. The point $v^1$ is an $(I,j)$-pseudovertex for $I=\{2\}$ and any $j\in \{1,3\}$. The corresponding half-space, depicted in orange, only contains the generator $v^1$ in its boundary.}\label{fig:non-pure}
\end{figure}

Given a $(I,j)$-pseudovertex $a$ of $\PP$, the tightness properties of Theorem~\ref{ThNonRedundantApex} enforces that $\HH := \HH(a,I)$ is a \emph{minimal} half-space containing $\PP$, meaning that for every half-space $\HH'$, $\PP \subseteq \HH' \subseteq \HH$ implies $\HH' = \HH$ (see~\cite[Proposition~42]{AllamigeonKatzJCTA2013}). As shown in~\cite[Lemma~38]{AllamigeonKatzJCTA2013}, if $\PP$ is pure, there exists at most one minimal half-space with a given apex. Obviously, there is always a non-redundant external representation of $\PP$ composed of minimal half-spaces. By Theorem~\ref{ThNonRedundantApex}, their apices are $(I,j)$-pseudovertices of $\PP$. Therefore, when $\PP$ is pure, every minimal half-space appearing in such an external representation is of the form $\HH(a,I)$, where $a$ is a $(I,j)$-pseudovertex of $\PP$. This leads to the following proposition:
\begin{proposition}\label{PropUniqueExternalRepr}
If the tropical polytope $\PP$ is pure, the set of half-spaces $\HH(a,I)$ associated with the $(I,j)$-pseudovertices $a$ of $\PP$ constitutes an external representation of~$\PP$.
\end{proposition}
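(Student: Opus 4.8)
The plan is to assemble \textbf{Proposition~\ref{PropUniqueExternalRepr}} entirely from the facts already gathered in this subsection, so the proof is essentially a matter of quoting the right results in the right order. First I would recall that every tropical polytope admits a non-redundant external representation composed of half-spaces whose apices belong to $\PP$ (by~\cite{AllamigeonKatzJCTA2013}); among all such representations there is always one in which each half-space is \emph{minimal}, since any half-space containing $\PP$ can be shrunk to a minimal one containing $\PP$ without leaving the collection's intersection unchanged, and this shrinking preserves the property that the apex lies in $\PP$ (this is exactly the content invoked via~\cite[Proposition~42]{AllamigeonKatzJCTA2013}). Fix such a non-redundant external representation $\HH_1, \dots, \HH_m$ by minimal half-spaces, so that $\PP = \bigcap_{\ell} \HH_\ell$.

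Next I would invoke Theorem~\ref{ThNonRedundantApex}: the apex $a_\ell$ of each $\HH_\ell$, being a non-redundant apex of $\PP$, is an $(I_\ell, j_\ell)$-pseudovertex of $\PP$ for some non-empty proper $I_\ell \subset \oneto{n}$ and $j_\ell \in \oneto{n} \setminus I_\ell$. It remains to identify $\HH_\ell$ with the canonical half-space $\HH(a_\ell, I_\ell)$ attached to that pseudovertex data. Here is where purity enters: by~\cite[Lemma~38]{AllamigeonKatzJCTA2013}, when $\PP$ is pure there is \emph{at most one} minimal half-space containing $\PP$ with a prescribed apex. On the one hand $\HH_\ell$ is such a minimal half-space with apex $a_\ell$; on the other hand $\HH(a_\ell, I_\ell)$ is also a minimal half-space containing $\PP$ with apex $a_\ell$, because the tightness Properties~\eqref{ThNonRedundantApexC1}--\eqref{ThNonRedundantApexC3} of the $(I_\ell,j_\ell)$-pseudovertex $a_\ell$ force minimality (via~\cite[Proposition~42]{AllamigeonKatzJCTA2013}; Property~\eqref{ThNonRedundantApexC1} gives $\PP \subseteq \HH(a_\ell, I_\ell)$ and the other two give tightness). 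By the uniqueness just quoted, $\HH_\ell = \HH(a_\ell, I_\ell)$.

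Therefore the finite collection $\{ \HH(a_\ell, I_\ell) \}_{\ell=1}^m$ of half-spaces associated with $(I,j)$-pseudovertices of $\PP$ has intersection exactly $\PP$, and this collection is contained in the (possibly larger) set of \emph{all} half-spaces $\HH(a,I)$ with $a$ an $(I,j)$-pseudovertex of $\PP$. Adding more half-spaces, each of which still contains $\PP$ (again by Property~\eqref{ThNonRedundantApexC1}), can only keep the intersection equal to $\PP$. Hence the full set $\{\HH(a,I) : a \text{ an } (I,j)\text{-pseudovertex of } \PP\}$ is an external representation of $\PP$, as claimed.

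I do not anticipate a genuine obstacle: the only subtlety is making sure that the pseudovertex data $(I_\ell, j_\ell)$ supplied by Theorem~\ref{ThNonRedundantApex} really produces a \emph{minimal} half-space $\HH(a_\ell,I_\ell)$ — i.e.\ that the three combinatorial conditions translate into geometric tightness — which is precisely what~\cite[Proposition~42]{AllamigeonKatzJCTA2013} provides, and then that the pure case has the uniqueness of minimal half-spaces with a given apex, which is~\cite[Lemma~38]{AllamigeonKatzJCTA2013}. So the proof is a short chain of citations plus the trivial observation that enlarging an external representation by half-spaces all containing $\PP$ preserves it; I would keep it to one compact paragraph.
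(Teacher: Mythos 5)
Your proof is correct and follows essentially the same route as the paper: take a non-redundant external representation by minimal half-spaces with apices in $\PP$, use Theorem~\ref{ThNonRedundantApex} to see the apices are $(I,j)$-pseudovertices, use \cite[Proposition~42]{AllamigeonKatzJCTA2013} and \cite[Lemma~38]{AllamigeonKatzJCTA2013} (purity) to identify each half-space of the representation with $\HH(a,I)$, and note that the remaining pseudovertex half-spaces all contain $\PP$ by Property~(\ref{ThNonRedundantApexC1}). The paper packages this reasoning in the paragraph preceding the proposition, but the chain of citations and the final enlargement argument are the same as yours.
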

 
\begin{example}
Consider again the tropical polytope of Figure~\ref{fig:running_example}. In this case, it can be shown that the $(I,j)$-pseudovertices of $\PP$ coincide with the non-redundant apices of $\PP$. Thus, recalling Example~\ref{ExampleNonredundantApices}, it follows that the external representation provided by Proposition~\ref{PropUniqueExternalRepr} is composed of the half-spaces: $\HH(a^1,\{2,3\})$, $\HH(a^2,\{3\})$, $\HH(a^3,\{1\})$ and $\HH(a^4,\{2\})$.
\end{example}

\begin{remark}\label{remark:superfluous}
It is worth mentioning that except when $n=3$, the external representation given in Proposition~\ref{PropUniqueExternalRepr} may contain superfluous half-spaces. In other words, this may not be a non-redundant external representation.
\end{remark}

As illustrated in Figure~\ref{fig:non-pure}, if the tropical polytope $\PP$ is non-pure, the half-space $\HH(a,I)$ associated with a $(I,j)$-pseudovertex $a$ of $\PP$ may contain strictly less than $n-1$ generators of $\PP$ in its boundary.
In contrast, when $\PP$ is pure, any lift $\liftP$ of $\PP$ is full-dimensional, and so the facets of $\liftP$ contain at least $n-1$ generators of $\liftP$. In such a case, if $\HH(a,I)$ was the image of a facet-defining half-space of $\liftP$ as we want, then $\HH(a,I)$ should contain at least $n-1$ generators of $\PP$ in its boundary. In the next proposition we use Lemma~\ref{LemmaExtTypePure} and the three properties of Theorem~\ref{ThNonRedundantApex} to prove that when $\PP$ is pure, the boundary of $\HH(a,I)$ indeed contains at least $n-1$ generators of $\PP$. 
In fact, we prove that the boundary of $\HH(a,I)$ contains $n-1$ extreme points of $\PP$ satisfying certain properties involving their extremality types. 

\begin{proposition}\label{PropNonRedunApexPure}
Let $a$ be a $(I,j)$-pseudovertex of $\PP$ ($j \in \oneto{n} \setminus I$). If $\PP$ is pure, then the boundary of $\HH(a,I)$ contains at least $n-1$ extreme points of $\PP$.

More precisely, there exists a subset $\{v^{r_k}\}_{k\in \oneto{n} \setminus \{j\}}$ of  $\{v^r\}_{r\in \oneto{p}}$ such that:
\begin{enumerate}[label=(\roman*),ref=\roman*]
\item\label{PropNonRedunApexPureP1} for all $k \in \oneto{n} \setminus (I \cup \{j\})$, $v^{r_k}$ is an extreme point of $\PP$ of type $k$ verifying $r_k \in S_k(a)$; 
\item\label{PropNonRedunApexPureP2}  for all $i \in I$, $v^{r_i}$ is an extreme point of $\PP$ of type $j$ verifying $r_i \in ( S_i(a) \cap S_j(a) ) \setminus \bigl( \bigcup_{h\in I\setminus \{i \}} S_h(a) \bigr)$.
\end{enumerate}
\end{proposition}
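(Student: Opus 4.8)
The plan is to construct the set $\{v^{r_k}\}_{k\in\oneto{n}\setminus\{j\}}$ explicitly, treating separately the indices $k\in I$ and $k\in\oneto{n}\setminus(I\cup\{j\})$ (these exhaust $\oneto{n}\setminus\{j\}$ since $j\notin I$), and to invoke purity only at the very end, to guarantee that the $n-1$ points obtained are pairwise distinct. The one preliminary observation I would record is that, by Property~\eqref{ThNonRedundantApexC1} of Theorem~\ref{ThNonRedundantApex}, every generator $v^r$ lies in $\bigcup_{i\in I}\SS(a,i)=\HH(a,I)$, whence $\PP\subseteq\HH(a,I)$ by tropical convexity; this inclusion is used throughout.

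Fix $i\in I$. By Property~\eqref{ThNonRedundantApexC3} there is a generator $v^r$ with $r\in(S_i(a)\cap S_j(a))\setminus\bigcup_{h\in I\setminus\{i\}}S_h(a)$, so that $v^r\in\SS(a,i)\cap\SS(a,j)$ and $v^r\notin\SS(a,h)$ for every $h\in I\setminus\{i\}$. Following the construction in Remark~\ref{RemarkNeighborhood} applied to $v^r$ and the index $j$, I would take $v^{r_i}$ to be a minimal element (w.r.t.\ $\leq$) of the compact nonempty set $\{z\in\maxplus^n\mid z_j=v^r_j,\ z\leq v^r\}\cap\PP$; this is an extreme point of $\PP$ of type $j$, and by construction $v^{r_i}\leq v^r$ and $v^{r_i}_j=v^r_j$, hence $v^{r_i}\in\SS(v^r,j)$. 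Since $v^r\in\SS(a,j)$, \eqref{PropertySectors} gives $\SS(v^r,j)\subseteq\SS(a,j)$, so $v^{r_i}\in\SS(a,j)$, i.e.\ $r_i\in S_j(a)$. For $h\in I\setminus\{i\}$ the chain
\[
(\mpinverse{a}_h)\mptimes v^{r_i}_h\leq(\mpinverse{a}_h)\mptimes v^r_h<(\mpinverse{a}_i)\mptimes v^r_i=(\mpinverse{a}_j)\mptimes v^r_j=(\mpinverse{a}_j)\mptimes v^{r_i}_j
\]
(the strict step because $v^r\notin\SS(a,h)$) shows $v^{r_i}\notin\SS(a,h)$, i.e.\ $r_i\notin S_h(a)$. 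Finally, since $v^{r_i}\in\PP\subseteq\HH(a,I)=\bigcup_{i'\in I}\SS(a,i')$ while $v^{r_i}$ misses every sector $\SS(a,h)$ with $h\in I\setminus\{i\}$, necessarily $v^{r_i}\in\SS(a,i)$, i.e.\ $r_i\in S_i(a)$. Thus $v^{r_i}$ fulfils item~\eqref{PropNonRedunApexPureP2}; note that purity has played no role so far.

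Fix now $k\in\oneto{n}\setminus(I\cup\{j\})$. Property~\eqref{ThNonRedundantApexC2} forces $S_k(a)\neq\emptyset$, so there is a generator $v^r$ with $v^r\in\SS(a,k)$; applying the construction in Remark~\ref{RemarkNeighborhood} to $v^r$ and the index $k$ yields an extreme point $v^{r_k}$ of $\PP$ of type $k$ lying in $\SS(v^r,k)\subseteq\SS(a,k)$ (again by \eqref{PropertySectors}), so $r_k\in S_k(a)$ and item~\eqref{PropNonRedunApexPureP1} holds. A short computation using only $\PP\subseteq\HH(a,I)$ and the sector memberships then shows that each point $v^{r_k}$, $k\in\oneto{n}\setminus\{j\}$, lies on the boundary of $\HH(a,I)$: when $k\notin I$, $v^{r_k}\in\SS(a,k)$ makes $(\mpinverse{a}_k)\mptimes v^{r_k}_k$ equal both to $\mpplus_{i\in I}(\mpinverse{a}_i)\mptimes v^{r_k}_i$ (via $v^{r_k}\in\HH(a,I)$) and to $\mpplus_{l\in\oneto{n}\setminus I}(\mpinverse{a}_l)\mptimes v^{r_k}_l$; when $k=i\in I$ this is immediate from $v^{r_i}\in\SS(a,i)\cap\SS(a,j)$.

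It remains to check that the points $\{v^{r_k}\}_{k\in\oneto{n}\setminus\{j\}}$ are pairwise distinct, which is the only place purity enters. By Lemma~\ref{LemmaExtTypePure} an extreme point of the pure polytope $\PP$ has a unique extremality type; since $v^{r_k}$ is extreme of type $k$ for $k\in\oneto{n}\setminus(I\cup\{j\})$ and $v^{r_i}$ is extreme of type $j$ for $i\in I$, any two of our points with different extremality types must differ. The only remaining pairs are $\{v^{r_i},v^{r_{i'}}\}$ with $i\neq i'$ in $I$, where $r_i\notin S_{i'}(a)\ni r_{i'}$ already forces $r_i\neq r_{i'}$, hence $v^{r_i}\neq v^{r_{i'}}$. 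Therefore the boundary of $\HH(a,I)$ contains these $n-1$ distinct extreme points, and the refined statement follows from the per-point properties above. I expect the construction for $i\in I$ to be the delicate step — one must keep careful track of which sector-defining inequalities are strict — but the decisive point there is simply that $v^{r_i}$, living in $\PP\subseteq\HH(a,I)$ and avoiding every other sector composing $\HH(a,I)$, has nowhere to go but $\SS(a,i)$; the non-pure example of Figure~\ref{fig:non-pure} shows that without purity the construction still produces such points, but they may coincide.
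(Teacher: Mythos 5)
Your proof is correct and takes essentially the same route as the paper: it uses Properties~(\ref{ThNonRedundantApexC1})--(\ref{ThNonRedundantApexC3}) of Theorem~\ref{ThNonRedundantApex} together with Remark~\ref{RemarkNeighborhood} and~\eqref{PropertySectors} to produce the extreme points, the same strict-inequality chain to place $v^{r_i}$ in $(S_i(a)\cap S_j(a))\setminus\bigcup_{h\in I\setminus\{i\}}S_h(a)$, and the same distinctness argument combining Lemma~\ref{LemmaExtTypePure} (purity) with the index-disjointness within $I$, followed by the same boundary observation. The only differences are cosmetic: you unfold the minimal-element construction of Remark~\ref{RemarkNeighborhood} explicitly and conclude $r_i\in S_i(a)$ via the union-of-sectors description of $\HH(a,I)$ rather than the paper's equivalent phrasing.
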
 

\begin{proof} 
By Property~\eqref{ThNonRedundantApexC2} of Theorem~\ref{ThNonRedundantApex}, for each $k \in \oneto{n} \setminus (I \cup \{j\})$, the set $S_k(a)$ is not empty, and so there exists a generator $v^r$ of $\PP$ in $\SS (a,k)$. Let $v^{r_k}$ be an extreme point of $\PP$ of type $k$ (provided by Remark~\ref{RemarkNeighborhood}) such that $v^{r_k} \in \SS(v^r,k)$. Then, by~\eqref{PropertySectors} we have $v^{r_k} \in \SS(a,k)$, \ie{} 
$r_k \in S_k(a)$.

Now, let $i \in I$. By Property~\eqref{ThNonRedundantApexC3} of Theorem~\ref{ThNonRedundantApex} there exists $r \in \oneto{p}$ such that $r \in S_i(a) \cap S_j(a)$ and $r \not \in \bigcup_{h \in I \setminus \{i\}} S_h(a)$. Moreover, by Remark~\ref{RemarkNeighborhood} we know that there exists $r_i \in \oneto{p}$ such that $v^{r_i}$ is extreme of type $j$ and $v^{r_i} \in \SS(v^r,j)$. It follows that $(\mpinverse{v_j^{r_i}}) \mptimes v_h^{r_i} \leq (\mpinverse{v^r_j}) \mptimes v^r_h \leq (\mpinverse{a}_j) \mptimes a_h$ for all $h \in \oneto{n}$, where the last inequality is strict for $h \in I\setminus \{i\}$. Thus, 
we have $(\mpinverse{a}_h) \mptimes v_h^{r_i}\leq (\mpinverse{a}_j) \mptimes v_j^{r_i}$ 
for $h\in \{i \}\cup (\oneto{n}\setminus I)$ and $(\mpinverse{a}_h) \mptimes v_h^{r_i} < (\mpinverse{a}_j) \mptimes v_j^{r_i}$ for 
$h\in  I\setminus \{i \}$. Besides, since $v^{r_i} \in \PP \subset \HH(a,I)$ (by Property~\eqref{ThNonRedundantApexC1} of Theorem~\ref{ThNonRedundantApex}), there exists $k \in I$ such that $(\mpinverse{a}_k) \mptimes v_k^{r_i} \geq (\mpinverse{a}_j) \mptimes v_j^{r_i}$. We conclude that $k$ must be equal to $i$, and so $(\mpinverse{a}_i) \mptimes v_i^{r_i} = (\mpinverse{a}_j) \mptimes v_j^{r_i} \geq (\mpinverse{a}_h) \mptimes v_h^{r_i}$ for all $h\in \oneto{n}$, with strict inequality for $h\in  I\setminus \{i \}$. In consequence, we have $r_i \in (S_i(a) \cap S_j(a)) \setminus \bigl(\bigcup_{h \in I \setminus \{i\}} S_h(a)\bigr)$.

Observe that for $i, h \in I$, we have $r_i \neq r_{h}$ if $i \neq h$, since $r_i \in S_i(a)\setminus S_{h}(a)$ while $r_{h} \in S_{h}(a)\setminus S_i(a)$. Moreover, by Lemma~\ref{LemmaExtTypePure}, the extreme points $v^{r_k}$ for $k \in \oneto{n} \setminus (I \cup \{j\})$ are pairwise distinct, and distinct from the extreme points $v^{r_i}$ for $i \in I$.

Finally, note that for each $k \in \oneto{n} \setminus \{j\}$ there exists $h\in \oneto{n} \setminus I$ such that $r_k\in S_h(a)$. This together with Property~\eqref{ThNonRedundantApexC1} of Theorem~\ref{ThNonRedundantApex} imply that $v^{r_k}$ belongs to the boundary of $\HH(a,I)$ for all $k \in \oneto{n} \setminus \{j\}$.  
\end{proof}

\begin{example}
Once more, consider the tropical polytope $\PP$ of Figure~\ref{fig:running_example}. For the $(I,j)$-pseudovertex $a^1=(0,3,3)$, we have $S(a^1)=(\{1,2\},\{2,3\},\{1,4\})$, and so $I=\{2,3\}$ and $j=1$. In this case, only Property~\eqref{PropNonRedunApexPureP2} of Proposition~\ref{PropNonRedunApexPure} applies because $\oneto{n} \setminus (I \cup \{j\})=\emptyset$. Observe that $2\in (S_2(a^1)\cap S_1(a^1))\setminus S_3(a^1)$ and $v^2$ is extreme of type $1$. Similarly, we have $1\in (S_3(a^1)\cap S_1(a^1))\setminus S_2(a^1)$ and $v^1$ is extreme of type $1$. Thus, in Proposition~\ref{PropNonRedunApexPure} we have $r_2=2$ and $r_3=1$ for $a=a^1$. 

Now, consider the $(I,j)$-pseudovertex $a^2=(0,5,1)$. Since $S(a^2)=(\{2\},\{3\},\{1,2,3,4\})$, we have $I=\{3\}$, and $j$ can be either $1$ or $2$. Let us take $j=2$, so that $\oneto{n} \setminus (I \cup \{j\})=\{1\}$. Note that $2\in S_1(a^2)$ and $v^2$ is extreme of type $1$. Besides, we have $3\in (S_3(a^2)\cap S_2(a^2))$ and $v^3$ is extreme of type $2$. Thus, in Proposition~\ref{PropNonRedunApexPure} we have $r_1=2$ and $r_3=3$ for $a=a^2$.
\end{example}

\section{Proof of the theorem when the generators are in general position}\label{SectionGeneralP}

In this section we prove the second part of Theorem~\ref{ConjDevelinYu}. With this aim, it is convenient to recall the following results. 

\begin{lemma}[{\cite[Lemma 5.1]{RGST}}]\label{LemmaHyper}
The points $u^1,\dots, u^n \in \tproj^{n-1}$ are contained in a tropical hyperplane if, and only if, the $n\times n$ matrix whose columns are these points is tropically singular.  
\end{lemma}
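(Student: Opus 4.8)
The plan is to prove the equivalence by a direct combinatorial argument relating the multiplicity of the optimum in $\tper$ to the argmax structure of the tropical linear forms at the given points. Since $\tper(U) = \tper(\transpose{U})$, tropical singularity is invariant under transposition, so I would work with $V \mydef \transpose{U}$, whose $r$-th row is $u^r$; for an apex $a \in \tproj^{n-1}$, the point $u^r$ lies on the tropical hyperplane with apex $a$ if and only if the set $T_r$ of indices $i \in \oneto{n}$ maximizing $u^r_i - a_i$ has at least two elements. The one tool I would invoke repeatedly is the tropical Laplace expansion along a fixed row $k$: for every square matrix $M$, $\tper(M) = \max_{j \in \oneto{n}}\bigl(M_{kj} + \tper(M_{\hat{k}\hat{j}})\bigr)$, where $M_{\hat{k}\hat{j}}$ denotes $M$ with row $k$ and column $j$ deleted; and, refining this, the maximum on the right is attained at an index $j_0$ exactly when some permutation realizing $\tper(M)$ sends $k$ to $j_0$. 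Both facts are immediate from the definition of $\tper$.

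For the direction ``contained in a hyperplane $\Rightarrow$ tropically singular'', I would fix an apex $a$ with $\modulus{T_r} \geq 2$ for all $r$, set $\hat U_{ri} \mydef u^r_i - a_i$, and observe that $\hat U$ has the same optimal permutations, with the same multiplicity, as $V$, since the two objective functions differ by a constant. Each row $r$ of $\hat U$ attains its maximum, call it $\mu_r$, precisely on $T_r$. Starting from any optimal permutation $\sigma$ of $\hat U$, I would pick for each $r$ an index $z_r \in T_r \setminus \{\sigma(r)\}$ (nonempty as $\modulus{T_r} \geq 2$); the self-map $r \mapsto \sigma^{-1}(z_r)$ of $\oneto{n}$ is then fixed-point-free, hence contains a cycle $r_1 \mapsto \cdots \mapsto r_t \mapsto r_1$ with $t \geq 2$ and $\sigma(r_{j+1}) = z_{r_j}$ (indices modulo $t$). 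Reassigning $\sigma'(r_j) \mydef z_{r_j}$ along the cycle and leaving $\sigma$ unchanged elsewhere yields another permutation $\sigma' \neq \sigma$, whose value equals $\tper(\hat U) + \sum_{j=1}^t \bigl(\mu_{r_j} - \hat U_{r_j\sigma(r_j)}\bigr) \geq \tper(\hat U)$; by maximality of $\tper(\hat U)$ this is an equality, so $\sigma'$ is also optimal and $\tper(U)$ is attained at least twice.

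For the converse I would build the apex explicitly via a tropical ``cofactor'' formula. Given that $\tper(V)$ is realized by two distinct permutations, I would choose a coordinate $k$ at which they disagree and set $a_j \mydef -\tper(V_{\hat{k}\hat{j}})$ for all $j$. For each $r$, let $W_r$ be the matrix obtained from $V$ by overwriting its $k$-th row with a copy of its $r$-th row. Since the rows of $W_r$ other than $k$ are those of $V$, expanding along row $k$ gives $\tper(W_r) = \max_j\bigl(u^r_j + \tper(V_{\hat{k}\hat{j}})\bigr) = \max_j(u^r_j - a_j)$, and, by the refined Laplace statement, this maximum over $j$ is attained at least twice precisely when $\tper(W_r)$ is realized by two permutations differing at $k$. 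When $r \neq k$, the matrix $W_r$ has two identical rows (rows $k$ and $r$, both equal to $u^r$), and swapping the images of $k$ and $r$ in any optimal permutation produces a second one differing at $k$; when $r = k$, we have $W_k = V$ and the two permutations selected at the start differ at $k$ by construction. Hence every $u^r$ lies on the tropical hyperplane with apex $a$.

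I expect the delicate point to be the first implication in the case where the sets $T_r$ admit no system of distinct representatives. When they do admit one, any such system is already an optimal permutation and a standard fact (a bipartite graph with all left-degrees $\geq 2$ carrying a perfect matching carries a second) immediately yields another optimal permutation; but in the general case no such shortcut is available, and the cycle-rerouting argument above is what is really needed to extract a second optimum of $\tper$. In the converse direction the only subtlety is the choice of the auxiliary row $k$: not every apex arising as a dual optimum of the associated assignment problem certifies that all $n$ points lie on a common tropical hyperplane, and it is precisely the agreement of two optimal permutations at $k$ that handles the one value $r = k$ that the two-equal-rows trick does not cover.
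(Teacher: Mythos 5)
Your proof is correct, and it is worth noting that the paper itself does not prove this lemma at all: it is quoted as Lemma~5.1 of [RGST] (Richter-Gebert, Sturmfels, Theobald), where the statement is established within the framework of the tropical Cramer rule and the assignment-problem interpretation of $\tper$. Your argument is a self-contained combinatorial substitute, and both halves check out. For the forward direction, normalizing by the apex ($\hat U_{ri} = u^r_i - a_i$) indeed preserves the set of optimal permutations, the map $r \mapsto \sigma^{-1}(z_r)$ is fixed-point-free and therefore has a cycle of length at least two, and rerouting $\sigma$ along that cycle replaces entries $\hat U_{r_j\sigma(r_j)} \leq \mu_{r_j}$ by row maxima, so optimality forces the rerouted permutation to be a second optimum; this correctly handles the case where the argmax sets $T_r$ have no system of distinct representatives, which is the point a naive matching argument would miss. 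For the converse, your apex $a_j = -\tper(V_{\hat k\hat j})$ is exactly the vector of tropical cofactors along the chosen row $k$ (the same object that underlies the tropical Cramer constructions in [RGST]), and the refined Laplace observation — that $j_0$ attains the row-$k$ expansion of $\tper(W_r)$ iff some optimal permutation of $W_r$ sends $k$ to $j_0$ — together with the two-equal-rows swap for $r \neq k$ and the choice of $k$ as a coordinate where two optimal permutations of $V$ disagree for $r = k$, gives the required double attainment for every point. So the net effect of your write-up is to replace an external citation by an elementary proof whose converse direction makes the tropical-cofactor mechanism behind the cited result explicit; the cycle-rerouting step in the forward direction is likewise a standard tool (it reappears, e.g., in the tropical determinant literature such as Akian–Gaubert–Guterman), but your presentation of it is complete and accurate.
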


\begin{theorem}[Corollary of {\cite[Theorem 1.3]{AGG2013}}]\label{TheoremSignHyper}
If the points $u^1,\dots, u^{n-1}\in \tproj^{n-1}$ are in general position, then there exists a unique signed tropical hyperplane which contains these points. 
\end{theorem}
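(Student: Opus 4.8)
Proof plan for Theorem~\ref{TheoremSignHyper} (the corollary of \cite[Theorem 1.3]{AGG2013}).

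The plan is to deduce the statement from the existence theorem for tropical linear systems in \cite{AGG2013}. First I would recall the precise form of \cite[Theorem 1.3]{AGG2013}: it asserts that a tropical linear system $A \mptimes x = B \mptimes x$ (with signs attached to the coefficients, i.e.\ over the symmetrized max-plus semiring) whose matrix satisfies a genericity/non-degeneracy condition on its tropical permanents has, up to tropical scalar, a unique solution. Equivalently, one phrases this in terms of the signed circuits / Cramer-type formula: the solution $x$ has coordinates $x_i = \tdet$ of the minor obtained by deleting the $i$-th column, with the sign dictated by the position. The translation I would make is: a signed tropical hyperplane with apex $a=(a_1,\dots,a_n)$ and sign partition $I, J$ is exactly the solution set of such a system, where the unknowns are the apex parameters $a_k$ once we impose that it pass through prescribed points.

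Here is the reduction. Suppose $u^1,\dots,u^{n-1}\in\tproj^{n-1}$ are in general position; let $U$ be the $n\times(n-1)$ matrix with these columns. A signed tropical hyperplane with apex $a$ and partition $(I,J)$ contains all the $u^r$ iff for each $r$ the quantity $\max_{i}(-a_i + u^r_i)$ is attained both for some index in $I$ and for some index in $J$. This is precisely a system of $n-1$ signed tropical equations in the $n$ unknowns $a_1,\dots,a_n$, whose coefficient matrix is (up to transposition and sign normalization) $U$ together with one extra column encoding the homogeneity. The general position hypothesis says every maximal minor of $U$ is tropically non-singular, which is exactly the hypothesis under which \cite[Theorem 1.3]{AGG2013} guarantees a unique (up to tropical scaling) solution $a$ \emph{together with} a uniquely determined sign pattern on the coordinates — and that sign pattern is what singles out the partition $(I,J)$. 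Hence there is one and only one signed tropical hyperplane through $u^1,\dots,u^{n-1}$.

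I would also need to check two small points. First, \emph{existence}: Lemma~\ref{LemmaHyper} applied to $u^1,\dots,u^{n-1}$ augmented by a generic $n$-th point shows these $n-1$ points lie on \emph{some} (unsigned) tropical hyperplane, and then the Cramer formula from \cite{AGG2013} produces an explicit apex $a$ whose minors are all non-null (by general position), so the maximum is attained exactly twice at each point, pinning down the two blocks $I$ and $J$. Second, \emph{the apex is unique as a point, not merely up to relabeling $I\leftrightarrow J$}: since a signed hyperplane and its ``opposite'' (swapping $I$ and $J$) are the \emph{same} set of points, there is no ambiguity — so uniqueness of the solution to the tropical Cramer system directly gives uniqueness of the signed hyperplane. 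The main obstacle I anticipate is purely expository: carefully matching the sign conventions of \cite{AGG2013} (symmetrized semiring, ``balanced'' relation) with the signed-hyperplane formalism of this paper, and verifying that the general-position assumption here is literally the non-degeneracy hypothesis there; once that dictionary is set up, the statement is an immediate corollary.
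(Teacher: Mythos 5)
The paper offers no proof of this statement---it is imported verbatim as a corollary of \cite[Theorem~1.3]{AGG2013}---and your reduction (take the signed coefficient vector of the hyperplane as the unknown of a homogeneous system of $n-1$ signed/balance equations whose coefficient matrix is the transpose of $U$, and invoke the tropical Cramer existence--uniqueness theorem, the general-position hypothesis being exactly the non-singularity of all maximal minors) is precisely the intended derivation, so you are following the same route as the paper. The only slip is the dispensable aside on existence: augmenting $u^1,\dots,u^{n-1}$ by a \emph{generic} $n$-th point makes the $n\times n$ matrix tropically non-singular, so by Lemma~\ref{LemmaHyper} the augmented family lies on \emph{no} tropical hyperplane; existence instead comes directly from the Cramer/cofactor vector itself, whose entries are finite and signed by general position and whose balance relations say exactly that each $u^r$ lies on the resulting signed hyperplane (with both sign classes non-empty, since the $u^r$ have finite, hence tropically positive, coordinates).
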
 

Recall that when the points $v^1, \dots, v^p$ are in general position and $p \geq n$, the facets of any lift of $\PP$ are simplicial cones, and all lifts are combinatorially equivalent, see~\cite[Proposition~2.3]{DevelinYu}. Before proving the second part of Theorem~\ref{ConjDevelinYu}, we establish the following characterization of the facets of the lifts of $\PP$. 

\begin{proposition}\label{prop:facet_characterization}
Assume the points $v^1, \dots ,v^p$ are in general position, and $p \geq n$. Let $R$ be a subset of $\oneto{p}$ with $n-1$ elements, and $\liftP = \cone \left( \{\liftv^r\}_{r\in \oneto{p}} \right)$ be a lift of $\PP$. 

Then, $\cone (\{\liftv^{r}\}_{r \in R})$ is a facet of $\liftP$ if, and only if, there exists a half-space $\HH(a,I)$ such that $\PP \subset \HH(a,I)$ and the set $\{v^r \}_{r \in R}$ is contained in the boundary of $\HH(a,I)$. When such half-space $\HH(a,I)$ exists, it coincides with the image under the valuation map of the half-space which defines the facet $\cone (\{\liftv^r \}_{r \in R})$.  
\end{proposition}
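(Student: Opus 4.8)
The plan is to prove both implications by relating the facets of the lift $\liftP$ to the tropicalization of their defining linear forms via Proposition~\ref{Prop2.4DevelinYu}, using the general position hypothesis crucially to guarantee that a facet is cut out by a \emph{positive} linear form.

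First I would prove the ``only if'' direction. Suppose $\cone(\{\liftv^r\}_{r\in R})$ is a facet of $\liftP$. Since the points $\liftv^r$ are in general position (by the Remark following Proposition~\ref{Prop2.1DevelinYu}), the $n-1$ columns $\{\liftv^r\}_{r\in R}$ span an $(n-1)$-dimensional subspace of $\K^n$, hence there is a linear form $\liftx \mapsto \sum_{i\in\oneto{n}} \liftf_i \liftx_i$, unique up to scaling, vanishing on all $\liftv^r$, $r\in R$; moreover all coordinates $\liftf_i$ are non-null, again by general position (otherwise some $(n-1)\times(n-1)$ minor would vanish). Since this form defines a facet of the cone $\liftP = \cone(\{\liftv^s\}_{s\in\oneto{p}})$, after possibly replacing $\liftf$ by $-\liftf$ we may assume $\sum_i \liftf_i \liftv^s_i \geq 0$ for all $s\in\oneto{p}$. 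Now set $a = (\mpinverse{\val(\liftf_1)},\dots,\mpinverse{\val(\liftf_n)})$ and $I = \{i \mid \liftf_i > 0\}$. We must check $I$ is a non-trivial partition block, i.e. $I\neq\emptyset$ and $I\neq\oneto{n}$: if $I=\emptyset$ then $-\liftf>0$ and $\sum_i\liftf_i\liftv^s_i \leq 0$ for all $s$ with $\liftv^s\in(\K^+)^n$, forcing the form to vanish identically on $\liftP$, contradicting that it is facet-defining; symmetrically $I\neq\oneto{n}$ would make $-\liftf$ define the same hyperplane with all generators on the negative side, again impossible for a proper facet (here I use that $\liftP$ is full-dimensional, which holds because $\PP$ is pure so $p\geq n$ and the generators are in general position). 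By Proposition~\ref{Prop2.4DevelinYu}, the image of $\{\liftx \mid \sum_i\liftf_i\liftx_i\geq 0\}\cap(\K^+)^n$ is $\HH(a,I)$, and since $\liftP\cap(\K^+)^n$ valuates onto $\PP$ we get $\PP\subset\HH(a,I)$; likewise the image of the bounding hyperplane intersected with $(\K^+)^n$ is $\boundary\HH(a,I)$, and since each $\liftv^r$ ($r\in R$) lies in that hyperplane and in $(\K^+)^n$, its valuation $v^r$ lies in $\boundary\HH(a,I)$. This also proves the last sentence of the statement: the facet-defining half-space $\{\sum_i\liftf_i\liftx_i\geq 0\}$ has image $\HH(a,I)$.

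Next, the ``if'' direction. Suppose $\HH(a,I)$ exists with $\PP\subset\HH(a,I)$ and $\{v^r\}_{r\in R}\subset\boundary\HH(a,I)$. By Theorem~\ref{TheoremSignHyper}, since the $n-1$ points $\{v^r\}_{r\in R}$ are in general position, there is a \emph{unique} signed tropical hyperplane containing them; the boundary $\boundary\HH(a,I)$ is such a signed hyperplane, so $a$ (together with the partition $I$) is uniquely determined by $R$. On the other hand, consider the $(n-1)$-dimensional subspace $W$ of $\K^n$ spanned by $\{\liftv^r\}_{r\in R}$ and the linear form $\liftf$ (with non-null coordinates, by general position) vanishing on $W$. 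By Proposition~\ref{Prop2.4DevelinYu} applied to $\liftf$, the image of the associated signed hyperplane intersected with $(\K^+)^n$ is a signed tropical hyperplane containing all $v^r$, $r\in R$; by the uniqueness just invoked it must equal $\boundary\HH(a,I)$, so $\val(\liftf) = (\mpinverse{a_1},\dots,\mpinverse{a_n})$ and, choosing the sign of $\liftf$ appropriately, $\{i\mid\liftf_i>0\}=I$. It remains to see that $\liftf$ is facet-defining for $\liftP$, i.e. that $\sum_i\liftf_i\liftv^s_i\geq 0$ for all $s\in\oneto{p}$. For any $s$, the point $v^s\in\PP\subset\HH(a,I)$, so $\mpplus_{i\in I}(\mpinverse{a_i})\mptimes v^s_i \geq \mpplus_{j\notin I}(\mpinverse{a_j})\mptimes v^s_j$; translating back through the valuation relations $\val(\liftf_i) = \mpinverse{a_i}$ and using that the dominant terms of $\sum_{i\in I}\liftf_i\liftv^s_i$ (a sum of positive series) have valuation $\mpplus_{i\in I}(\mpinverse{a_i})\mptimes v^s_i$, one deduces $\sum_i\liftf_i\liftv^s_i\geq 0$, with equality forced exactly when $v^s\in\boundary\HH(a,I)$. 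Since $|R|=n-1$ and the corresponding $\liftv^r$ are linearly independent and lie on the hyperplane $\{\sum_i\liftf_i\liftx_i=0\}$, the face $\liftP\cap\{\sum_i\liftf_i\liftx_i=0\}$ has dimension $\geq n-1$, hence is a facet of the full-dimensional cone $\liftP$, and it contains $\cone(\{\liftv^r\}_{r\in R})$; since facets of lifts are simplicial cones on $n-1$ generators (by~\cite[Proposition~2.3]{DevelinYu}), this facet is exactly $\cone(\{\liftv^r\}_{r\in R})$.

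The main obstacle I expect is the careful bookkeeping of signs and valuations in the step ``$v^s\in\HH(a,I)$ implies $\sum_i\liftf_i\liftv^s_i\geq 0$'': one has to argue that when $v^s$ lies strictly inside $\HH(a,I)$ the positive term $\mpplus_{i\in I}(\mpinverse{a_i})\mptimes v^s_i$ strictly dominates, so no cancellation can occur and the leading coefficient of $\sum_i\liftf_i\liftv^s_i$ is genuinely positive, whereas on the boundary $\boundary\HH(a,I)$ one must rule out that the leading terms of the $i\in I$ part cancel against each other — this is precisely where the general position of the generators (making the relevant tropical maxima attained uniquely along the boundary cell structure, cf.\ Lemma~\ref{LemmaHyper}) is needed. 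A secondary subtlety is checking that $I$ is a genuine non-trivial partition block in the ``only if'' direction, which relies on $\liftP$ being full-dimensional; this is where purity of $\PP$ enters.
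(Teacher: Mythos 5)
Your proposal follows essentially the same route as the paper's proof: tropicalize the facet-defining form via Proposition~\ref{Prop2.4DevelinYu} for the forward direction, and for the converse take the hyperplane through $\{\liftv^r\}_{r\in R}$, identify its tropicalization with $\boundary\HH(a,I)$ by the uniqueness in Theorem~\ref{TheoremSignHyper}, and use general position to get $\sum_i \liftf_i\liftv^s_i>0$ for $s\notin R$. The one step you flag as an obstacle is settled in the paper exactly by the tool you name: for $s\notin R$ the point $v^s$ cannot lie on $\boundary\HH(a,I)$, since otherwise the $n$ points $\{v^s\}\cup\{v^r\}_{r\in R}$ would lie in the tropical hyperplane with apex $a$, contradicting general position via Lemma~\ref{LemmaHyper}; then the $i\in I$ terms (all positive, so they cannot cancel one another) strictly dominate in valuation, and note that purity is not needed anywhere here, since $p\geq n$ and general position already make $\liftP$ full-dimensional.
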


\begin{proof}
Suppose first that $\cone (\{ \liftv^r \}_{r \in R})$ is a facet of $\liftP$. Let $\liftH = \bigl\{ \liftx \in \K^n\mid \sum_{i\in \oneto{n}} \liftf_i \utimes \liftx_i \geq 0 \bigr\}$ be the corresponding facet-defining half-space, so that it contains $\liftP$, and its boundary contains the set $\{\liftv^r \}_{r \in R}$ (note that $\liftf_i \neq 0$ for all $i\in \oneto{n}$ because the points $\liftv^1, \dots, \liftv^p$ are in general position). By Proposition~\ref{Prop2.4DevelinYu}, the image under the valuation map of $\liftH \cap (\K^+)^n$ is a tropical half-space $\HH(a,I)$, and the image of $\bigl\{ \liftx \in \K^n\mid \sum_{i\in \oneto{n}} \liftf_i \utimes \liftx_i = 0 \bigr\} \cap (\K^+)^n$ is a signed hyperplane, which is equal to the boundary of $\HH(a,I)$. It follows that $\PP \subset \HH(a,I)$, and the set $\{v^r \}_{r \in R}$ is contained in the boundary of $\HH(a,I)$.

Reciprocally, assume that there exists a half-space $\HH(a,I)$ such that $\PP \subset \HH(a,I)$ and the set $\{v^r \}_{r \in R}$ is contained in the boundary of $\HH(a,I)$. Let $\bigl\{ \liftx \in \K^n\mid \sum_{i\in \oneto{n}} \liftf_i \utimes \liftx_i = 0 \bigr\}$ be the unique hyperplane which contains the set $\{\liftv^r \}_{r\in R}$ (again we have $\liftf_i \neq 0$ for all $i\in \oneto{n}$ because the points $\liftv^1, \dots, \liftv^p$ are in general position). Then, by Proposition~\ref{Prop2.4DevelinYu}, the image under the valuation map of $\bigl\{ \liftx \in \K^n\mid \sum_{i\in \oneto{n}} \liftf_i \utimes \liftx_i = 0 \bigr\} \cap (\K^+)^n$ is the signed hyperplane:
\begin{equation}\label{SignedHyperplane}
\bigl\{ x\in \tproj^{n-1}\mid \mpplus_{i\in I'} (\mpinverse{b}_i) \mptimes x_i =
\mpplus_{j\in J'} (\mpinverse{b}_j) \mptimes x_j \bigr\}\; , 
\end{equation} 
where $I' = \{ i\in \oneto{n} \mid \liftf_i>0\}$, 
$J' = \{ j\in \oneto{n} \mid \liftf_j<0 \} = \oneto{n}\setminus I'$ and 
$b_i = \mpinverse{\val (\liftf_i)}$ for $i \in \oneto{n}$. Besides, since $\sum_{i\in \oneto{n}} \liftf_i \utimes \liftv^{r}_i =0$ for $r \in R$, the set $\{v^r \}_{r \in R}$ is contained in this signed hyperplane. Then, by Theorem~\ref{TheoremSignHyper} we deduce that the signed hyperplane~\eqref{SignedHyperplane} coincides with the boundary of $\HH(a,I)$. Thus, we can assume without loss of generality that $I' = I$ (replacing $\liftf$ by $-\liftf$ if necessary), and that $a_i = b_i$ for all $i \in \oneto{n}$.

Let us fix now $r\in\oneto{p}\setminus R$. Since $\PP \subset \HH(a,I)$, we have $\mpplus_{i\in I} (\mpinverse{a}_i) \mptimes v_i^r \geq \mpplus_{j\in \oneto{n} \setminus I} (\mpinverse{a}_j) \mptimes v_j^r$. It follows that $\mpplus_{i\in I} (\mpinverse{a}_i) \mptimes v_i^r > \mpplus_{j\in \oneto{n} \setminus I} (\mpinverse{a}_j) \mptimes v_j^r$ because the points $v^1, \dots ,v^p$ are in general position, and so $v^r$ cannot belong to the boundary of $\HH(a,I)$ by Lemma~\ref{LemmaHyper}. Thus, there exists $k\in I$ such that 
\[
(\mpinverse{a}_k) \mptimes v_k^r > (\mpinverse{a}_j) \mptimes v_j^r \: 
\makebox{ for all } \; j \in \oneto{n} \setminus I \; ,
\]
which implies 
\[
\val (\liftf_k \utimes \liftv_k^r )> \val (\liftf_j \utimes \liftv_j^r) \: 
\makebox{ for all } \; j \in \oneto{n} \setminus I \; . 
\]
It follows that $\sum_{i\in \oneto{n}} \liftf_i \liftv^r_i > 0$  because $\liftf_i>0$ for $i\in I'=I$. Since this holds for any 
$r\in \oneto{p} \setminus R$, 
we conclude that 
$\liftP\subset \bigl\{ \liftx\in \K^n\mid \sum_{i\in \oneto{n}} \liftf_i \utimes \liftx_i \geq 0 \bigr\}$. 

As a consequence, the half-space $\bigl\{ \liftx \in \K^n\mid \sum_{i\in \oneto{n}} \liftf_i \utimes \liftx_i \geq 0 \bigr\}$ defines a facet of $\liftP$. Thanks to Proposition~\ref{Prop2.4DevelinYu}, we know that the image under the valuation map of this half-space is $\HH(a,I)$. This completes the proof.
\end{proof} 

We are now ready to prove the second part of Theorem~\ref{ConjDevelinYu}, exploiting Proposition~\ref{PropUniqueExternalRepr} and the half-spaces associated with the $(I,j)$-pseudovertices.

\begin{theorem}\label{TheoFacetLiftExtRepr}
Assume the points $v^1, \dots ,v^p$ are in general position, and $\PP$ is pure. Then, the images under the valuation map of the facet-defining half-spaces of any lift of $\PP$ provide an external representation of $\PP$ by tropical half-spaces. More precisely, this external representation contains all the half-spaces associated with the $(I,j)$-pseudovertices of $\PP$.
\end{theorem}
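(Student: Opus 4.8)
The strategy is to combine Proposition~\ref{PropUniqueExternalRepr} with the facet characterization of Proposition~\ref{prop:facet_characterization}. By Proposition~\ref{PropUniqueExternalRepr}, since $\PP$ is pure, the half-spaces $\HH(a,I)$ associated with the $(I,j)$-pseudovertices $a$ of $\PP$ already form an external representation of $\PP$. Hence it suffices to show that each such $\HH(a,I)$ arises as the image under the valuation map of a facet-defining half-space of the given lift $\liftP$; then the set of images of the facet-defining half-spaces contains an external representation, and since each of these images is contained in $\HH(a',I')$ for the corresponding pseudovertex and $\PP$ is contained in each image (as $\liftP \subset \liftH$ implies $\PP = \val(\liftP) \subset \val(\liftH\cap(\K^+)^n)$), their intersection is exactly $\PP$.

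\textbf{Main step.} Fix an $(I,j)$-pseudovertex $a$ of $\PP$. By Proposition~\ref{PropNonRedunApexPure}, the boundary of $\HH(a,I)$ contains a set $\{v^{r_k}\}_{k\in\oneto{n}\setminus\{j\}}$ of $n-1$ distinct extreme points of $\PP$ (distinctness is guaranteed there since $\PP$ is pure). Let $R = \{r_k \mid k \in \oneto{n}\setminus\{j\}\}$, a subset of $\oneto{p}$ with exactly $n-1$ elements. First I would dispose of the degenerate case $p < n$: in general position with $\PP$ pure one must have $p \geq n$ (a pure polytope in $\tproj^{n-1}$ needs at least $n$ generators; alternatively, if $p<n$ the polytope cannot be full-dimensional), so Proposition~\ref{prop:facet_characterization} applies. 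Now apply Proposition~\ref{prop:facet_characterization} to $R$: we have a half-space, namely $\HH(a,I)$ itself, with $\PP \subset \HH(a,I)$ and $\{v^r\}_{r\in R}$ contained in its boundary. The proposition then yields that $\cone(\{\liftv^r\}_{r\in R})$ is a facet of $\liftP$, and that the image under the valuation map of the half-space defining this facet is precisely $\HH(a,I)$. Thus $\HH(a,I)$ is the image of a facet-defining half-space of $\liftP$, as required.

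\textbf{Conclusion.} Having shown that every half-space $\HH(a,I)$ with $a$ an $(I,j)$-pseudovertex is the image of a facet-defining half-space of $\liftP$, and that by Proposition~\ref{Prop2.4DevelinYu} the image of every facet-defining half-space is a tropical half-space containing $\PP = \val(\liftP)$, we conclude that the family of images of the facet-defining half-spaces of $\liftP$ is a finite family of tropical half-spaces, each containing $\PP$, and containing as a sub-family the external representation of Proposition~\ref{PropUniqueExternalRepr}. Therefore the intersection of this family equals $\PP$, which is the desired external representation; and it contains all half-spaces associated with $(I,j)$-pseudovertices of $\PP$. Since by \cite[Proposition~2.3]{DevelinYu} all lifts are combinatorially equivalent when the generators are in general position, and the argument used no special property of the lift beyond Propositions~\ref{prop:facet_characterization} and~\ref{Prop2.4DevelinYu}, the conclusion holds for \emph{any} lift.

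\textbf{Expected obstacle.} The only delicate point is ensuring the hypotheses of Proposition~\ref{prop:facet_characterization} are genuinely met — specifically that $|R| = n-1$ (which requires the $n-1$ extreme points from Proposition~\ref{PropNonRedunApexPure} to be pairwise distinct, already established there under purity) and that $p \geq n$. Everything else is a direct chaining of the cited results; no new geometric input is needed.
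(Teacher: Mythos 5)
Your proposal is correct and follows essentially the same route as the paper: invoke Proposition~\ref{PropNonRedunApexPure} to place $n-1$ distinct extreme points on the boundary of $\HH(a,I)$, note $p\geq n$ by purity (the paper phrases this as: if $p<n$ then $\PP$ lies in a tropical hyperplane, hence could not be pure), apply Proposition~\ref{prop:facet_characterization} to identify $\HH(a,I)$ as the image of a facet-defining half-space, and conclude via Proposition~\ref{PropUniqueExternalRepr}. No gaps worth noting.
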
 

\begin{proof}
Let $\liftP$ be any lift of $\PP$. First of all, recall that the image under the valuation map of any facet-defining half-space of $\liftP$ is a tropical half-space which contains $\PP$. 

Given a $(I,j)$-pseudovertex $a\in \tproj^{n-1}$ of $\PP$ ($j \in \oneto{n} \setminus I$),
by Proposition~\ref{PropNonRedunApexPure} we know that there exist $n-1$ distinct extreme points of $\PP$, $v^{r_k}$ for $k \in \oneto{n} \setminus \{j\}$, contained in the boundary of $\HH(a,I)$. Moreover, as $\PP$ is pure, we necessarily have $p \geq n$ (if $p < n$, we can find a tropical hyperplane containing $\PP$). Then, from Proposition~\ref{prop:facet_characterization} it follows that $\cone \left(\{\liftv^{r_k}\}_{k \in \oneto{n} \setminus \{j\}}\right)$ is a facet of $\liftP$, and that the image under the valuation map of the corresponding facet-defining half-space is $\HH(a,I)$. 

We conclude by using Proposition~\ref{PropUniqueExternalRepr}.
\end{proof}

\begin{remark}
The external representation provided by Theorem~\ref{TheoFacetLiftExtRepr}  
can contain superfluous half-spaces, even in the case where the representation of Proposition~\ref{PropUniqueExternalRepr} is non-redundant. For example, consider the following points of $\tproj^4$:
\[
\begin{aligned}
v^1 & = (0, 2.5176, 10.5161, -0.484, 2.5151)&
v^2 & = (0, 3.5149, 11.0142, 0.0149, 3.0115)\\
v^3 & = (0, 3.0217, 11.5012, 0.0101, 3.0009)&
v^4 & = (0, 3.0154, 11.0145, 0.0056, 3.5239)\\
v^5 & = (0, 2.0238, 14.5216, 13.0094, 13.0252)&
v^6 & = (0, 2.0131, 14.0181, 13.5023, 13.0153)\\
v^7 & = (0, 2.0005, 14.0202, 13.0097, 13.5148)&
v^8 & = (0, 0.5033, 2.5111, 10.5037, 6.5084)\\
v^9 & = (0, 1.5245, 3.001, 11.0068, 7.0053)&
v^{10} & = (0, 1.0232, 3.0155, 11.511, 7.0175)\\
v^{11} & = (0, 1.0083, 3.0047, 11.0005, 7.5248)
\end{aligned}
\]
Note that these points have been numerically perturbed to ensure that they are in general position. The tropical polytope $\PP$ generated by these points is pure and has \emph{generic extremities}. This genericity property, introduced in~\cite[Section~5]{AllamigeonKatzJCTA2013}, ensures in particular that the non-redundant apices of $\PP$ are precisely the $(I,j)$-pseudovertices of $\PP$, see~\cite[Theorem~51]{AllamigeonKatzJCTA2013}. This allows us to verify that the external representation of Proposition~\ref{PropUniqueExternalRepr}, consisting of $26$ half-spaces, is non-redundant. In contrast, the number of facets of any lift $\liftP$ of $\PP$, which can be determined thanks to Proposition~\ref{prop:facet_characterization}, is equal to $27$. More precisely, the points $\liftv^1$, $\liftv^3$, $\liftv^5$, and $\liftv^8$ define a facet of $\liftP$, and the image under the valuation map of the corresponding facet-defining half-space is $\HH(a,\{2,4,5\})$, where $a = (0,2.5176,10.9971,10.5037,9.5007)$. The type of $a$ is 
\[
S(a) = \big(\{ 1, 8 \}, \{ 1, 2, 3, 4\}, \{ 3, 5 \}, \{ 8,9,10,11\}, \{5,6,7\}\big) \ .
\]
If this point was a $(I,j)$-pseudovertex of $\PP$, then by Property~\eqref{ThNonRedundantApexC1} of Theorem~\ref{ThNonRedundantApex} we would necessarily have $I \supset \{2,4,5\}$. However, observe that for any $j \in \{1,3\}$, there exists $i \in \{2,4,5\}$ which falsifies Property~\eqref{ThNonRedundantApexC3} of Theorem~\ref{ThNonRedundantApex}. 
We deduce that the half-space $\HH(a,\{2,4,5\})$ is superfluous in the external representation of $\PP$ provided by Theorem~\ref{TheoFacetLiftExtRepr}.
\end{remark}

\section{Proof of the theorem when the generators are in arbitrary position}\label{SectionArbitraryP}

In this section we consider the case in which $\PP$ is pure but its generators $v^1, \dots, v^p$ are not necessarily in general position. In consequence, the lifts of $\PP$ may not be combinatorially equivalent anymore. 

As announced earlier, our proof of the first part of Theorem~\ref{ConjDevelinYu} relies on a symbolic perturbation technique. With this aim, we consider the tropical semiring associated with the group $(\R^3, +, \leqlex )$, as defined in Section~\ref{SubSectionTG}. The relation $\leqlex$ refers to the lexicographic ordering over triples, and the group law $+$ is the component-wise addition. Intuitively, the triple $(\lambda_1, \lambda_2, \lambda_3) \in \R^3$ represents the element $\lambda_1 + \lambda_2 \epsilon + \lambda_3 \epsilon^2$, where $\epsilon > 0$ plays the role of an infinitesimal, and $\lambda_2$ and $\lambda_3$ stand for a first-order and a second-order perturbation respectively. 
For the sake of simplicity, henceforth we denote by $\Tp$ the tropical semiring associated with $(\R^3, +, \leqlex )$, by $\tprojp{}^{n-1}$ the corresponding projective space, by $\Kp$ the field of real Hahn series with value group $(\R^3, +, \leqlex )$ and by $\Kp^+$ the set of positive elements of $\Kp$. Given $\xp = (\xp_1, \xp_2, \xp_3) \in \Tp$, we denote by $\pi_i(\xp) := \xp_i$ the projection on the $i$-th coordinate. We extend this notation to $\Tp^n$ and $\tprojp{}^{n-1}$ in a coordinate-wise way.

Our approach relies on the application of Theorem~\ref{TheoFacetLiftExtRepr} to a tropical polytope of $\tprojp{}^{n-1}$ encoding a perturbation of $\PP$. To this aim, we need to make sure that Theorem~\ref{TheoFacetLiftExtRepr} still holds in the perturbed setting, \ie~when substituting $\T$ by $\Tp$ and $\K$ by $\Kp$. The proof of Theorem~\ref{TheoFacetLiftExtRepr} and of its main ingredient, Proposition~\ref{prop:facet_characterization}, involve two kinds of results. On the one hand, it is based on the connection between ordinary polyhedral sets and their tropical counterparts through the valuation map, presented in Section~\ref{SubSectionLifts}. It can be verified that these results remain valid in the perturbed setting. In more details, their proof uses the fact that the valuation map from $\Kp^+ \cup \{0\}$ to $\Tp$ is a monotone (semiring) homomorphism. In Proposition~\ref{Prop2.4DevelinYu}, we also use the fact that if $\bot < x \leq y$, then we can find two positive elements $\liftx$ and $\lifty$ such that $\deg(\liftx) = x$, $\deg(\lifty) = y$ and $\liftx < \lifty$. On the other hand, the proof of Theorem~\ref{TheoFacetLiftExtRepr} depends on several properties of tropical objects (polytopes, hyperplanes and half-spaces), namely Lemmas~\ref{LemmaExtTypePure} and~\ref{LemmaHyper}, Theorems~\ref{ThNonRedundantApex} and~\ref{TheoremSignHyper}, and Propositions~\ref{CharactExtP}, \ref{PropUniqueExternalRepr} and~\ref{PropNonRedunApexPure}. We propose to use arguments from model theory to ensure that these results are still correct when replacing $\T$ by $\Tp$. Indeed, their statements can be expressed as first-order formul{\ae} in the language of ordered groups.\footnote{The language of ordered groups is given by $\mathcal{L} = \{0, +, -, <\}$, where $0$ is a constant standing for the neutral element, $+$ and $-$ are respectively binary and unary functions representing the addition and its inverse, and $<$ is a binary relation representing the (proper) ordering.} The groups $(\R, +, \leq)$ and $(\R^3, +, \leqlex)$ are both models of the theory of non-trivial ordered divisible abelian groups, \ie~they satisfy the axioms defining the theory of such groups (see~\cite{Marker} for an introduction to these concepts). As shown by Marker in~\cite[Corollary~3.1.17]{Marker}, the models of this theory are elementarily equivalent, meaning that a formula is valid in a model of the theory if and only if it is valid in any of its models. In other words, the aforementioned results hold in $\Tp$, based on the fact that they are valid in $\T$. Following this discussion, it can be checked that the arguments used in the proof of Proposition~\ref{prop:facet_characterization} and Theorem~\ref{TheoFacetLiftExtRepr} are still correct in the setting of $\Tp$. We deduce that Theorem~\ref{TheoFacetLiftExtRepr} can be applied to tropical polytopes of $\tprojp{}^{n-1}$ and their lifts over $\Kp^n$.

We consider a symbolic perturbation of the generators $v^1, \dots, v^p$ of $\PP$ given by the following points of $\tprojp{}^{n-1}$:
\begin{equation}\label{DefPerturbedPoint}
\vp^r   \mydef 
\begin{cases}
(v^r,e^i,\gamma^r) & \text{if} \ v^r \ \text{is an extreme point of} \ \PP \ \text{of type} \ i  , \\
(v^r,0,\gamma^r) & \text{otherwise,} \\
\end{cases} 
\end{equation}
for $r \in \oneto{p}$, where $e^i \in \R^n$ is the $i$-th element of the canonical basis of $\R^n$, $0$ is the origin of $\R^n$, and $\gamma^r$ is a point of $\R^n$. Note that the points $\vp^r$ are well-defined because each extreme point of $\PP$ has a unique extremality type (by Lemma~\ref{LemmaExtTypePure}). The points $\gamma^1, \dots, \gamma^p $ are chosen to be in general position. Thanks to this assumption, the symbolically perturbed points $\vp^1, \dots, \vp^p$ are also in general position. We define $\PPp \subset \tprojp{}^{n-1}$ as the tropical polytope generated by the points $\vp^1,\ldots ,\vp^p$. This provides a perturbation of $\PP$, in the sense that $\pi_1(\PPp)=\PP$, since $\pi_1(\vp^r)=v^r$ for all $r \in \oneto{p}$. This perturbation scheme is illustrated in Figures~\ref{fig:perturbation}(a) and~\ref{fig:perturbation}(b).

\begin{figure}
\begin{center}
\begin{tikzpicture}[convex/.style={draw=lightgray,fill=lightgray,fill opacity=0.7},convexborder/.style={ultra thick},vtx/.style={blue!50},
extended line/.style={shorten >=-#1,shorten <=-#1},extended line/.default=1cm,
scale=0.9,>=triangle 45
]
\begin{scope}
\draw[gray!30,very thin,step=2] (-1.5,-1.5) grid (3.5,3.5);
\draw[gray!20,very thin,dashed] (-1.5,-1.5) grid (3.5,3.5);
\draw[gray!50,->] (-1.5,0) -- (3.5,0) node[color=gray!50,below,font=\scriptsize] {$x_2-x_1$};
\draw[gray!50,->] (0,-1.5) -- (0,3.5) node[color=gray!50,above,font=\scriptsize] {$x_3-x_1$};
\filldraw[convex] (0,0) -- (0,2) -- (2,2) -- (2,0)-- cycle;
\draw[convexborder] (0,0) -- (0,2) -- (2,2) -- (2,0)-- cycle;
\filldraw[vtx] (0,0) circle (3pt) node[below left=-0.5ex] {$v^1$};
\filldraw[vtx] (0,2) circle (3pt) node[above left=-0.5ex] {$v^3$};
\filldraw[vtx] (2,0) circle (3pt) node[below right=-0.5ex] {$v^2$};

\node[font=\normalfont] at (1,-2) {(a)};
\end{scope}

\begin{scope}[shift={(8,0)}]
\draw[gray!30,very thin,step=2] (-1.5,-1.5) grid (3.5,3.5);
\draw[gray!20,very thin,dashed] (-1.5,-1.5) grid (3.5,3.5);
\draw[gray!50,->] (-1.5,0) -- (3.5,0) node[color=gray!50,below,font=\scriptsize] {$x_2-x_1$};
\draw[gray!50,->] (0,-1.5) -- (0,3.5) node[color=gray!50,above,font=\scriptsize] {$x_3-x_1$};

\filldraw[convex] (-0.3,-0.3) -- (-0.3,2) -- (0,2.3) -- (2.3,2.3) -- (2.3,0) -- (2,-0.3) -- cycle;
\draw[convexborder] (-0.3,-0.3) -- (-0.3,2) -- (0,2.3) -- (2.3,2.3) -- (2.3,0) -- (2,-0.3) -- cycle;

\filldraw[vtx] (-0.3,-0.3) circle (3pt) node[below left=-0.5ex] {$\vp^1$};
\filldraw[vtx] (0,2.3) circle (3pt) node[above left=-0.5ex] {$\vp^3$};
\filldraw[vtx] (2.3,0) circle (3pt) node[below right=-0.5ex] {$\vp^2$};

\node[font=\normalfont] at (1,-2) {(b)};
\end{scope}

\begin{scope}[shift={(8,-7)}]
\draw[gray!30,very thin,step=2] (-1.5,-1.5) grid (3.5,3.5);
\draw[gray!20,very thin,dashed] (-1.5,-1.5) grid (3.5,3.5);
\draw[gray!50,->] (-1.5,0) -- (3.5,0) node[color=gray!50,below,font=\scriptsize] {$x_2-x_1$};
\draw[gray!50,->] (0,-1.5) -- (0,3.5) node[color=gray!50,above,font=\scriptsize] {$x_3-x_1$};

\filldraw[convex,fill opacity=0.5] (-0.3,-0.3) -- (-0.3,2) -- (0,2.3) -- (2.3,2.3) -- (2.3,0) -- (2,-0.3) -- cycle;

\begin{scope}
\clip 	(-1.5,-1.5) rectangle (3.5,3.5);

\begin{scope}[shift={(-0.3,2)},shsborder/.style={green!60!black,dashdotted,very thick},
shs/.style={draw=none,pattern=north west lines,pattern color=green!60!black,fill opacity=0.5}]
\draw[shsborder] (0,-10) -- (0,0) -- (10,10);
\filldraw[shs] (0,-10) -- (0,0) -- (10,10) -- (10.29,10) -- (0.2,-0.09) -- (0.2,-10) -- cycle;
\filldraw[green!60!black] (0,0) circle (3pt) node[below left=-0.5ex] {$\ap^2$};
\end{scope}

\begin{scope}[shift={(2,-0.3)},shsborder/.style={red!50,dashdotted,very thick},
shs/.style={draw=none,pattern=north west lines,pattern color=red!50,fill opacity=0.5}]
\draw[shsborder] (-10,0) -- (0,0) -- (10,10);
\filldraw[shs] (-10,0) -- (0,0) -- (10,10) -- (10,10.29) -- (-0.09,0.2) -- (-10,0.2) -- cycle;
\filldraw[red!50] (0,0) circle (3pt) node[below left=-0.5ex] {$\ap^3$};
\end{scope}

\begin{scope}[shift={(2.3,2.3)},shsborder/.style={orange,dashdotted,very thick},
shs/.style={draw=none,pattern=north east lines,pattern color=orange,fill opacity=0.5}]
\draw[shsborder] (-10,0) -- (0,0) -- (0,-10);
\filldraw[shs] (-10,0) -- (0,0) -- (0,-10) -- (-0.2,-10) -- (-0.2,-0.2) -- (-10,-0.2) -- cycle;
\filldraw[orange] (0,0) circle (3pt) node[above right=-0.1ex] {$\ap^1$};
\end{scope}
\end{scope}

\node[font=\normalfont] at (1,-2) {(c)};
\end{scope}

\begin{scope}[shift={(0,-7)}]
\draw[gray!30,very thin,step=2] (-1.5,-1.5) grid (3.5,3.5);
\draw[gray!20,very thin,dashed] (-1.5,-1.5) grid (3.5,3.5);
\draw[gray!50,->] (-1.5,0) -- (3.5,0) node[color=gray!50,below,font=\scriptsize] {$x_2-x_1$};
\draw[gray!50,->] (0,-1.5) -- (0,3.5) node[color=gray!50,above,font=\scriptsize] {$x_3-x_1$};

\filldraw[convex,fill opacity=0.5] (0,0) -- (0,2) -- (2,2) -- (2,0)-- cycle;

\begin{scope}
\clip 	(-1.5,-1.5) rectangle (3.5,3.5);

\begin{scope}[shift={(0,2)},shsborder/.style={green!60!black,dashdotted,very thick},
shs/.style={draw=none,pattern=north west lines,pattern color=green!60!black,fill opacity=0.5}]
\draw[shsborder] (0,-10) -- (0,0) -- (10,10);
\filldraw[shs] (0,-10) -- (0,0) -- (10,10) -- (10.29,10) -- (0.2,-0.09) -- (0.2,-10) -- cycle;
\filldraw[green!60!black] (0,0) circle (3pt) node[below left=-0.5ex] {$\pi_1(\ap^2)$};
\end{scope}

\begin{scope}[shift={(2,0)},shsborder/.style={red!50,dashdotted,very thick},
shs/.style={draw=none,pattern=north west lines,pattern color=red!50,fill opacity=0.5}]
\draw[shsborder] (-10,0) -- (0,0) -- (10,10);
\filldraw[shs] (-10,0) -- (0,0) -- (10,10) -- (10,10.29) -- (-0.09,0.2) -- (-10,0.2) -- cycle;
\filldraw[red!50] (0,0) circle (3pt) node[below left=-0.5ex] {$\pi_1(\ap^3)$};
\end{scope}

\begin{scope}[shift={(2,2)},shsborder/.style={orange,dashdotted,very thick},
shs/.style={draw=none,pattern=north east lines,pattern color=orange,fill opacity=0.5}]
\draw[shsborder] (-10,0) -- (0,0) -- (0,-10);
\filldraw[shs] (-10,0) -- (0,0) -- (0,-10) -- (-0.2,-10) -- (-0.2,-0.2) -- (-10,-0.2) -- cycle;
\filldraw[orange] (0,0) circle (3pt) node[above right=-0.7ex] {$\pi_1(\ap^1)$};
\end{scope}
\end{scope}

\node[font=\normalfont] at (1,-2) {(d)};
\end{scope}

\end{tikzpicture}
\end{center}
\caption{(a)~The pure tropical polytope $\PP$ generated by the points $v^1 = (0,0,0)$, $v^2 = (0,1,0)$ and $v^3 = (0,0,1)$. These points are extreme of type $1$, $2$ and $3$ respectively; (b)~A representation of the polytope $\PPp$, neglecting the second-order perturbation, \ie~any element $(\xp_1, \xp_2, \xp_3)$ of $\Tp$ is represented by $\xp_1 + \epsilon \xp_2$ where $\epsilon > 0$ is fixed; (c)~The half-spaces associated with the $(I,j)$-pseudovertices of $\PPp$; (d)~The projection under $\pi_1$ of the previous half-spaces.}\label{fig:perturbation}
\end{figure}

The first-order perturbation that we have made ensures that the extremality types, and their uniqueness, are preserved:
\begin{lemma}\label{lemma:perturbed_extremality_type}
For all $r \in \oneto{p}$ and $i\in \oneto{n}$, the following equivalence holds: $v^r$ is an extreme point of $\PP$ of type $i$ if, and only if, $\vp^r$ is an extreme point of $\PPp$ of type $i$.
\end{lemma}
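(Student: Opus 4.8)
The plan is to transfer extremality statements between $\PP$ and $\PPp$ through the projection $\pi_1$ and the first-order data $\pi_2$. Recall from Proposition~\ref{CharactExtP} that $v^r$ is extreme of type $i$ in $\PP$ exactly when $v^r$ is the only point of $\PP$ lying in the sector $\SS(v^r,i)$, and similarly for $\vp^r$ in $\PPp$; so the whole statement reduces to comparing the sets $\PP \cap \SS(v^r,i)$ and $\PPp \cap \SS(\vp^r,i)$. The key elementary facts I would use are: $\pi_1$ is a semiring homomorphism $\Tp \to \T$ (since $+$ on $\R^3$ acts coordinate-wise and $\leqlex$ refines the order on the first coordinate), hence $\pi_1$ maps tropically convex sets to tropically convex sets and $\pi_1(\PPp) = \PP$; and, crucially, $\pi_1(\vp^r) = v^r$ for all $r$ while $\pi_2(\vp^r) = e^i$ precisely when $v^r$ is extreme of type $i$.

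First I would prove the forward implication. Suppose $v^r$ is extreme of type $i$ in $\PP$, so $\vp^r = (v^r, e^i, \gamma^r)$. Take any $\xp \in \PPp \cap \SS(\vp^r, i)$; applying $\pi_1$ and using that $\pi_1$ is an order-preserving homomorphism, we get $\pi_1(\xp) \in \PP \cap \SS(v^r, i)$, hence $\pi_1(\xp) = v^r$ by extremality of $v^r$. Thus $\xp$ and $\vp^r$ agree in their first coordinate (up to the tropical scalar, which I normalize away), and it remains to pin down the second and third coordinates. Because $\xp \in \SS(\vp^r,i)$ means $(-\vp^r_i)\mptimes \xp_k \preceq (-\vp^r_i)\mptimes \xp_i$ coordinate-wise-lexicographically for all $k$, and the first coordinates are already equal, the inequality on the second coordinate reads $\pi_2(\xp)_k - \pi_2(\xp)_i \le (e^i)_k - (e^i)_i = -1$ for $k \neq i$; combined with the fact that $\pi_2(\xp)$ is a tropical combination of the $\pi_2(\vp^s)$, each of which is either $0$ or a canonical basis vector, a short case analysis forces $\pi_2(\xp) = e^i = \pi_2(\vp^r)$ (only the term $\vp^r$ can contribute in coordinate $i$ without violating the strict $-1$ gap). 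Finally the third coordinate is handled by the same projection argument one level down, or more simply by observing that once the first two coordinates of $\xp$ coincide with those of $\vp^r$, minimality of $\vp^r$ in $\PPp$ (again via Proposition~\ref{CharactExtP}, using that the $\gamma^s$ are in general position so the relevant sector cell is a single point) yields $\xp = \vp^r$.

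For the converse I would argue by contraposition: if $v^r$ is \emph{not} extreme of type $i$ in $\PP$, then $\vp^r$ has the form $(v^r, e^{i'}, \gamma^r)$ with $i' \ne i$ (if $v^r$ is extreme of some other type) or $(v^r, 0, \gamma^r)$ (if $v^r$ is not extreme at all), and in either case $\pi_2(\vp^r) \ne e^i$. In the first subcase, the already-proved forward direction applied with $i'$ shows $\vp^r$ is extreme of type $i'$, and then uniqueness of the extremality type for $\PPp$ (which holds because $\PPp$ is pure, being a first-order-small perturbation — one can invoke Lemma~\ref{LemmaExtTypePure}, or simply note $\pi_1(\PPp) = \PP$ is pure and $\PPp$ inherits purity) rules out $\vp^r$ being extreme of type $i$. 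In the second subcase, $v^r$ not extreme means there exist $x, y \in \PP$ with $v^r = x \oplus y$ and $v^r \notin \{x,y\}$; I would lift $x, y$ to points $\xp, \lifty \in \PPp$ (as tropical combinations of the $\vp^s$ realizing the combinations expressing $x, y$), check that $\pi_1(\xp \oplus \lifty) = v^r$ and that the second-order slack from the $\gamma^s$ can be absorbed, producing a decomposition $\vp^r = \xp' \oplus \lifty'$ in $\PPp$ with $\vp^r \notin \{\xp', \lifty'\}$, so $\vp^r$ is not extreme at all, a fortiori not extreme of type $i$.

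The main obstacle I anticipate is the bookkeeping in the converse's second subcase: lifting the decomposition $v^r = x\oplus y$ to $\PPp$ is not canonical, because the scalars $\lambda_s$ realizing $x$ and $y$ as tropical combinations of the $v^s$ need to be perturbed into $\Tp$-scalars so that the lifted combination still returns exactly $\vp^r$ (not merely something projecting to $v^r$); this is where the general-position assumption on $\gamma^1,\dots,\gamma^p$ and a careful choice of first- and second-order perturbation of the scalars must be used. A cleaner alternative, which I would try first, is to avoid decompositions entirely and run both directions through Proposition~\ref{CharactExtP}'s sector characterization together with the cell-decomposition description~\eqref{EqCell}: the type $S(\vp^r)$ in $\PPp$ refines $S(v^r)$ in $\PP$ in a controlled way dictated by the $e^i$'s, and one reads off directly that $\SS(\vp^r,i)$ meets $\PPp$ only in $\vp^r$ iff $\pi_2(\vp^r) = e^i$.
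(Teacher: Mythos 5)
The weak point is your converse direction. In your first subcase you invoke uniqueness of the extremality type for $\PPp$, i.e.\ purity of $\PPp$ via Lemma~\ref{LemmaExtTypePure}; but in the paper's logical order purity of $\PPp$ (Corollary~\ref{CoroPurePerturbed}) is \emph{deduced from} the very lemma you are proving, and your substitute justification --- that $\PPp$ ``inherits'' purity because $\pi_1(\PPp)=\PP$ is pure --- is not an available fact: nothing proved so far transfers purity through $\pi_1$, and establishing it would essentially require this lemma. As written, this step is circular. In your second subcase you acknowledge the obstacle yourself, and it is a genuine gap rather than bookkeeping: to conclude that $\vp^r$ is not extreme you must produce an \emph{exact} decomposition $\vp^r=\xp\oplus\tilde{y}$ with $\xp,\tilde{y}\in\PPp\setminus\{\vp^r\}$, and for a fixed first-level decomposition $v^r=\mpplus_{s\neq r}\lambda_s\mptimes v^s$ the second-level scalars must solve a system of max-equalities ($\max_{s\in A_k}(\mu_s+\pi_2(\vp^s_k))=0$ for every $k$, where $A_k$ is the set of first-level maximizers at coordinate $k$) which need not be solvable at all --- for instance when some coordinate $k$ has a unique maximizer $v^s$ that is extreme of a type $k'\neq k$ also attained by $s$ --- so one may even have to change the decomposition of $v^r$ itself. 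Finally, in your forward direction the closing appeal to ``minimality of $\vp^r$ in $\PPp$ via Proposition~\ref{CharactExtP}'' is again the statement being proved; that step can be repaired (extremality of $v^r$ and distinctness of the generators force $r$ to be the \emph{unique} first-level maximizer at coordinate $i$, which gives the lower bounds $\xp_k$ at least $\tilde\lambda_r\mptimes\vp^r_k$ lexicographically and, combined with the sector inequalities, pins down the second and third levels), but it needs more care than your sketch provides.

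For comparison, the paper's proof is a short contraposition in each direction based on Remark~\ref{RemarkNeighborhood}, and it avoids every one of these difficulties. If $v^r$ is not extreme of type $i$ in $\PP$, the remark yields a generator $v^s$, $s\neq r$, extreme of type $i$ with $v^s\in\SS(v^r,i)$; since then $\pi_2(\vp^s_i)=1$, $\pi_2(\vp^s_k)=0$ for $k\neq i$, and $\pi_2(\vp^r_i)=0$, whenever the first-level inequality defining $\SS(\vp^r,i)$ is tight the second level is strictly in favour of coordinate $i$, so $\vp^s\in\SS(\vp^r,i)$ and $\vp^r$ is not extreme of type $i$ in $\PPp$. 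Conversely, if $\vp^r$ is not extreme of type $i$ in $\PPp$, the remark (transferred to $\Tp$) gives an extreme point $\vp^s$ of $\PPp$ of type $i$ with $s\neq r$ and $\vp^s\in\SS(\vp^r,i)$, and projecting under $\pi_1$ gives $v^s\in\SS(v^r,i)$, so $v^r$ is not extreme of type $i$ in $\PP$. This uses neither purity of $\PPp$, nor any lifting of decompositions, nor the general position of the $\gamma^s$; I recommend reworking your argument along these lines.
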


\begin{proof}
First, suppose that $v^r$ is not an extreme point of $\PP$ of type $i$. By Remark~\ref{RemarkNeighborhood} we know that there exists $s \in \oneto{p}$ such that $v^s \in \SS(v^r,i)$ and $v^s$ is an extreme point of $\PP$ of type $i$. Then $s\neq r$, and by~\eqref{DefPerturbedPoint} for any $k \in \oneto{n}\setminus \{i\}$ we have $(\mpinverse{\vp^r_k})\mptimes \vp^s_k \leqlex ((\mpinverse{v^r_k}) \mptimes v^s_k, 0, (\mpinverse{\gamma^r_k}) \mptimes \gamma^s_k)$, while $(\mpinverse{\vp^r_i}) \mptimes \vp^s_i =  ((\mpinverse{v^r_i}) \mptimes v^s_i, 1, (\mpinverse{\gamma^r_i}) \mptimes \gamma^s_i)$. Since $v^s \in \SS(v^r,i)$, \ie{} $(\mpinverse{v^r_k})\mptimes v^s_k \leq (\mpinverse{v^r_i}) \mptimes v^s_i$ for all $k\in \oneto{n}$, it follows that $\vp^s \in \SS(\vp^r,i)$, and so $\vp^r$ is not an extreme point of $\PPp$ of type $i$.

Now, suppose that $\vp^r$ is not an extreme point of $\PPp$ of type $i$. Let $\vp^s$ be the extreme point of $\PPp$ of type $i$ provided by Remark~\ref{RemarkNeighborhood} which satisfies $\vp^s \in \SS(\vp^r,i)$. Then, we have $s\neq r$ and $v^s \in \SS(v^r,i)$, and so $v^r$ is not an extreme point of $\PP$ of type $i$.
\end{proof}

Thanks to Lemma~\ref{LemmaExtTypePure}, we obtain:

\begin{corollary}\label{CoroPurePerturbed}
The tropical polytope $\PPp$ is pure.	
\end{corollary}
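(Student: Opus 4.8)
The plan is to derive the corollary immediately by combining Lemma~\ref{LemmaExtTypePure} with Lemma~\ref{lemma:perturbed_extremality_type}. First I would recall that the extreme points of $\PPp$ form a subset of its generators $\{\vp^r\}_{r\in\oneto{p}}$, so that it suffices to understand the extremality types of the points $\vp^r$. By Lemma~\ref{lemma:perturbed_extremality_type}, for every $r\in\oneto{p}$ and every $i\in\oneto{n}$, $\vp^r$ is an extreme point of $\PPp$ of type $i$ if and only if $v^r$ is an extreme point of $\PP$ of type $i$. Taking the disjunction over $i\in\oneto{n}$ shows in particular that $\vp^r$ is extreme in $\PPp$ exactly when $v^r$ is extreme in $\PP$, and moreover that in this case the set of extremality types of $\vp^r$ (with respect to the generators of $\PPp$) coincides with the set of extremality types of $v^r$ (with respect to the generators of $\PP$).

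Next I would invoke the hypothesis that $\PP$ is pure: by Lemma~\ref{LemmaExtTypePure}, every extreme point of $\PP$ has a unique extremality type, hence by the previous paragraph every extreme point of $\PPp$ has a unique extremality type as well. Applying Lemma~\ref{LemmaExtTypePure} in the other direction to the tropical polytope $\PPp\subset\tprojp{}^{n-1}$ then yields that $\PPp$ is pure. The only point deserving a word of justification is that Lemma~\ref{LemmaExtTypePure} is available in the setting of $\Tp$; this is covered by the model-theoretic transfer argument already made above (the statement of Lemma~\ref{LemmaExtTypePure}, once ``pure'' is read through its combinatorial characterization via the natural cell decomposition, is a first-order sentence in the language of ordered groups, and $(\R^3,+,\leqlex)$ is elementarily equivalent to $(\R,+,\leq)$). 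There is no genuine obstacle here: the substantive content has already been isolated in Lemma~\ref{lemma:perturbed_extremality_type}, and the corollary is a two-line deduction.
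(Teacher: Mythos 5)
Your argument is exactly the paper's: the corollary is deduced by combining Lemma~\ref{lemma:perturbed_extremality_type} (preservation of extremality types under the perturbation) with the characterization of purity in Lemma~\ref{LemmaExtTypePure}, applied to $\PPp$ via the model-theoretic transfer already set up in Section~\ref{SectionArbitraryP}. The proposal is correct and takes essentially the same route, only spelling out details the paper leaves implicit.
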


The choice of the first-order perturbation is also motivated by the following lemma. It will allow us to show in Proposition~\ref{prop:projection_halfspaces} that an external representation of $\PP$ can be obtained by ``projecting'' under the map $\pi_1$ the external representation of $\PPp$ associated with its $(I,j)$-pseudovertices. We refer to Figures~\ref{fig:perturbation}(c) and~\ref{fig:perturbation}(d) for an illustration.

\begin{lemma}\label{lemma:fst_order_pert_apex}
Let $\ap$ be a $(I,j)$-pseudovertex of $\PPp$. Then, up to the (tropical) multiplication of $\ap$ by a scalar, the vector $\pi_2(\ap) \in \tproj^{n-1}$ is given by $(\pi_2(\ap))_i = 0$ if $i \in I$, and $(\pi_2(\ap))_i = 1$ otherwise.
\end{lemma}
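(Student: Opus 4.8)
The plan is to transport the combinatorial description of the $(I,j)$-pseudovertices of the polytope $\PPp$, which is pure by Corollary~\ref{CoroPurePerturbed}, through the coordinate projection $\pi_2$. The one elementary fact I will use is that $\pi_2\colon\R^3\to\R$ is a homomorphism of the underlying group of $\Tp$ onto that of $\T$: it commutes with $\mptimes$ and with $\mpinverse{\cdot}$ (both being component-wise operations on triples) and it sends equalities in $\Tp$ to equalities in $\T$. So the strategy is to write down enough \emph{equalities} of the form $(\mpinverse{\ap_i})\mptimes\vp^{r}_i=(\mpinverse{\ap_h})\mptimes\vp^{r}_h$ encoding the type of $\ap$, and then read off their second coordinates.

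Concretely, I would first invoke Proposition~\ref{PropNonRedunApexPure} for $\PPp$ and the $(I,j)$-pseudovertex $\ap$: it furnishes $n-1$ pairwise distinct generators $\vp^{r_k}$, $k\in\oneto{n}\setminus\{j\}$, lying in the boundary of $\HH(\ap,I)$, such that $\vp^{r_k}$ is an extreme point of $\PPp$ of type $k$ with $r_k\in S_k(\ap)$ for $k\in\oneto{n}\setminus(I\cup\{j\})$, and $\vp^{r_i}$ is an extreme point of $\PPp$ of type $j$ with $r_i\in(S_i(\ap)\cap S_j(\ap))\setminus\bigcup_{h\in I\setminus\{i\}}S_h(\ap)$ for $i\in I$. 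By Lemma~\ref{lemma:perturbed_extremality_type}, $v^{r_k}$ is then extreme of type $k$ in $\PP$ (resp.\ $v^{r_i}$ is extreme of type $j$), so by the definition~\eqref{DefPerturbedPoint} of the perturbation one has $\pi_2(\vp^{r_k})=e^k$ and $\pi_2(\vp^{r_i})=e^j$.

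The next step is the projection. For $i\in I$, the memberships $r_i\in S_i(\ap)$ and $r_i\in S_j(\ap)$ force the equality $(\mpinverse{\ap_i})\mptimes\vp^{r_i}_i=(\mpinverse{\ap_j})\mptimes\vp^{r_i}_j$ in $\Tp$ (both sides equal $\mpplus_{h\in\oneto{n}}(\mpinverse{\ap_h})\mptimes\vp^{r_i}_h$); applying $\pi_2$ and using $(\pi_2(\vp^{r_i}))_i=0$ (as $i\neq j$) and $(\pi_2(\vp^{r_i}))_j=1$ gives $(\pi_2(\ap))_i=(\pi_2(\ap))_j-1$, so $(\pi_2(\ap))_i$ equals a constant $c$ for all $i\in I$ and $(\pi_2(\ap))_j=c+1$. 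For $k\in\oneto{n}\setminus(I\cup\{j\})$, the membership $r_k\in S_k(\ap)$ together with $\PPp\subseteq\HH(\ap,I)$ (Property~\eqref{ThNonRedundantApexC1} of Theorem~\ref{ThNonRedundantApex}) yield $\mpplus_{i\in I}(\mpinverse{\ap_i})\mptimes\vp^{r_k}_i=(\mpinverse{\ap_k})\mptimes\vp^{r_k}_k$, hence $(\mpinverse{\ap_i})\mptimes\vp^{r_k}_i=(\mpinverse{\ap_k})\mptimes\vp^{r_k}_k$ for some $i\in I$; projecting with $(\pi_2(\vp^{r_k}))_i=0$ and $(\pi_2(\vp^{r_k}))_k=1$ gives $(\pi_2(\ap))_k=(\pi_2(\ap))_i+1=c+1$. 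Thus $(\pi_2(\ap))_i=c$ for $i\in I$ and $(\pi_2(\ap))_i=c+1$ for $i\in\oneto{n}\setminus I$, and replacing $\ap$ by $\lambda\mptimes\ap$ with $\lambda=(0,-c,0)$ — which subtracts $c$ from every second coordinate — gives exactly the asserted representative.

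I do not anticipate a genuine obstacle; the argument is essentially the projection onto the first-order part of the equalities describing the type of $\ap$. The points needing a line of care are: the short max-manipulation yielding $\mpplus_{i\in I}(\mpinverse{\ap_i})\mptimes\vp^{r_k}_i=(\mpinverse{\ap_k})\mptimes\vp^{r_k}_k$ (it follows from $\vp^{r_k}\in\SS(\ap,k)$ with $k\notin I$ and $\vp^{r_k}$ lying in the boundary of $\HH(\ap,I)$, together with $\PPp\subseteq\HH(\ap,I)$); the degenerate case $\oneto{n}=I\cup\{j\}$, where only the relations coming from $i\in I$ are used and the conclusion is unchanged; and the routine verification that $\pi_2$ may be applied to equalities of triples, since the group law of $\Tp$ and $\pi_2$ are both component-wise.
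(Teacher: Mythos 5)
Your proposal is correct and follows essentially the same route as the paper: apply Proposition~\ref{PropNonRedunApexPure} to the pure polytope $\PPp$, use Lemma~\ref{lemma:perturbed_extremality_type} to identify $\pi_2(\vp^{r_k})=e^k$ and $\pi_2(\vp^{r_i})=e^j$, and project the type equalities $(\mpinverse{\ap_i})\mptimes\vp^{r}_i=(\mpinverse{\ap_h})\mptimes\vp^{r}_h$ onto the second coordinate. The only cosmetic differences are that the paper normalizes $(\pi_2(\ap))_j=1$ at the outset rather than at the end, and obtains the index $i\in I$ with $r_k\in S_i(\ap)$ directly from Property~(\ref{ThNonRedundantApexC1}) of Theorem~\ref{ThNonRedundantApex} instead of your short max-manipulation, which amounts to the same thing.
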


\begin{proof}
By abuse of notation, in this proof we denote by $S(\xp)$ the type of $\xp \in \tprojp{}^{n-1}$ with respect to the points $\vp^1, \dots, \vp^p$. Up to multiplying $\ap$ by a scalar, we can assume that $(\pi_2(\ap))_j = 1$. 

In the first place, let us consider $i \in I$. By Property~\eqref{PropNonRedunApexPureP2} of Proposition~\ref{PropNonRedunApexPure} applied to $\PPp$, we know that there exists  $r_i \in S_i(\ap) \cap S_j(\ap)$ such that  $\vp^{r_i}$ is an extreme point of $\PPp$ of type~$j$. Then, Lemma~\ref{lemma:perturbed_extremality_type} ensures that $v^{r_i}$ is an extreme point of $\PP$ of type $j$, and so $\vp^{r_i}$ is of the form $(v^{r_i}, e^j, \gamma^{r_i})$. Thus, in particular we have $\pi_2(\vp^{r_i}_i) = 0$ and $\pi_2(\vp^{r_i}_j) = 1$. Since $(\mpinverse{\ap_i}) \mptimes \vp^{r_i}_i = (\mpinverse{\ap_j}) \mptimes \vp^{r_i}_j$ (due to the fact that $r_i \in S_i(\ap) \cap S_j(\ap)$) and $(\pi_2(\ap))_j = 1$, we deduce that $(\pi_2(\ap))_i = 0$. 

Similarly, given $k \in \oneto{n} \setminus (I \cup \{j\})$, by Property~\eqref{PropNonRedunApexPureP1} of Proposition~\ref{PropNonRedunApexPure}  applied to $\PPp$ there exists an extreme point $\vp^{r_k}$ of $\PPp$ of type $k$ such that $r_k \in S_k(\ap)$. By Property~\eqref{ThNonRedundantApexC1} of Theorem~\ref{ThNonRedundantApex}, there exists $i \in I$ such that $r_k \in S_i(\ap)$. This leads to the equality $(\mpinverse{\ap_i}) \mptimes \vp^{r_k}_i = (\mpinverse{\ap_k}) \mptimes \vp^{r_k}_k$. Besides, by Lemma~\ref{lemma:perturbed_extremality_type} we deduce that $\pi_2(\vp^{r_k}_i) = 0$ and $\pi_2(\vp^{r_k}_k) = 1$. From this and the fact that $(\pi_2(\ap))_i = 0$ (shown in the previous paragraph), we conclude that $(\pi_2(\ap))_k = 1$.
\end{proof}

By Corollary~\ref{CoroPurePerturbed}, we can apply Proposition~\ref{PropUniqueExternalRepr} to the tropical polytope $\PPp$, and so  the half-spaces associated with the $(I, j)$-pseudovertices of $\PPp$ constitute an external representation of $\PPp$. It is also useful to observe that given $\ap \in \tprojp{}^{n-1}$ and $I \subset \oneto{n}$, the projection $\pi_1(\HH(\ap,I))$ coincides with the half-space $\HH(\pi_1(\ap), I)$.

\begin{proposition}\label{prop:projection_halfspaces}
The images under $\pi_1$ of the half-spaces associated with the $(I, j)$-pseudovertices of $\PPp$ provide an external representation of $\PP$. 
\end{proposition}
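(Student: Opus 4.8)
The plan is to establish the set equality $\PP = \QQ$, where $\QQ \mydef \bigcap_{\ap} \HH(\pi_1(\ap), I)$ and $\ap$ ranges over the (finitely many) $(I,j)$-pseudovertices of $\PPp$; by the observation preceding the statement, $\HH(\pi_1(\ap),I) = \pi_1(\HH(\ap,I))$, so $\QQ$ is a finite intersection of tropical half-spaces. Since $\PPp$ is pure by Corollary~\ref{CoroPurePerturbed}, Proposition~\ref{PropUniqueExternalRepr} applied to $\PPp$ gives the identity $\PPp = \bigcap_{\ap} \HH(\ap,I)$, which is the bridge between the perturbed and the unperturbed settings.

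The inclusion $\PP \subseteq \QQ$ is immediate: from $\PPp \subseteq \HH(\ap,I)$ one gets, applying $\pi_1$, that $\PP = \pi_1(\PPp) \subseteq \pi_1(\HH(\ap,I)) = \HH(\pi_1(\ap),I)$ for every $\ap$, and intersecting over $\ap$ yields $\PP \subseteq \QQ$.

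For the reverse inclusion, I would fix $x \in \QQ$ and consider its canonical lift $\xp$, i.e.\ the point of $\tprojp{}^{n-1}$ with $\pi_1(\xp) = x$ and $\pi_2(\xp) = \pi_3(\xp) = 0$. The aim is to show $\xp \in \PPp$, which then gives $x = \pi_1(\xp) \in \pi_1(\PPp) = \PP$. Using $\PPp = \bigcap_{\ap}\HH(\ap,I)$, it suffices to check $\xp \in \HH(\ap,I)$ for each $(I,j)$-pseudovertex $\ap$. Fix such an $\ap$; by Lemma~\ref{lemma:fst_order_pert_apex}, after rescaling $\ap$ (which does not change $\HH(\ap,I)$) we may assume $\pi_2(\ap_i) = 0$ for $i \in I$ and $\pi_2(\ap_i) = 1$ for $i \in \oneto{n}\setminus I$. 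With this normalization, each term $(\mpinverse{\ap_i})\mptimes\xp_i$ with $i \in I$ has first coordinate $(\mpinverse{\pi_1(\ap_i)})\mptimes x_i$ and second coordinate $0$, whereas each term $(\mpinverse{\ap_j})\mptimes\xp_j$ with $j \notin I$ has first coordinate $(\mpinverse{\pi_1(\ap_j)})\mptimes x_j$ and second coordinate $-1$. Passing to lexicographic maxima, the sides $\mpplus_{i\in I}(\mpinverse{\ap_i})\mptimes\xp_i$ and $\mpplus_{j\notin I}(\mpinverse{\ap_j})\mptimes\xp_j$ then have first coordinates $m \mydef \mpplus_{i\in I}(\mpinverse{\pi_1(\ap_i)})\mptimes x_i$ and $m' \mydef \mpplus_{j\notin I}(\mpinverse{\pi_1(\ap_j)})\mptimes x_j$, and second coordinates $0$ and $-1$ respectively. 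Since $x \in \HH(\pi_1(\ap),I)$ we have $m \geq m'$: if $m > m'$ the lexicographic comparison is settled at the first coordinate; if $m = m'$ it is settled at the second coordinate, since $0 > -1$. Either way $\mpplus_{i\in I}(\mpinverse{\ap_i})\mptimes\xp_i \geq \mpplus_{j\notin I}(\mpinverse{\ap_j})\mptimes\xp_j$, i.e.\ $\xp \in \HH(\ap,I)$, and hence $\xp \in \PPp$ and $x \in \PP$.

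I expect the only genuine difficulty to be the lexicographic bookkeeping in the previous paragraph, and specifically the verification that the comparison of the two sides never has to descend to the second-order parts $\pi_3(\ap)$, which are generic and not under our control. This is precisely why the first-order perturbation by $e^i$ in~\eqref{DefPerturbedPoint} was chosen: through Lemma~\ref{lemma:fst_order_pert_apex} it forces $\pi_2(\ap)$ to equal $0$ on $I$ and $1$ off $I$, so that in the canonical lift every tie at the first coordinate is broken at the second coordinate in favour of the $I$-side, hence in favour of membership in $\HH(\ap,I)$. (In fact the same computation yields the equivalence $\xp \in \HH(\ap,I) \Longleftrightarrow x \in \HH(\pi_1(\ap),I)$, from which both inclusions drop out simultaneously.) Without the first-order term, the canonical lift of a boundary point of $\HH(\pi_1(\ap),I)$ could fall outside $\HH(\ap,I)$, and one would be left only with the trivial inclusion $\PP \subseteq \QQ$.
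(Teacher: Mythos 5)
Your proposal is correct and follows essentially the same route as the paper: both directions hinge on lifting $x$ to $(x,0,0)$, using Lemma~\ref{lemma:fst_order_pert_apex} to see that the first-order part of the apex breaks any first-coordinate tie in favour of the $I$-side, so that $(x,0,0)\in\HH(\ap,I)$ whenever $x\in\HH(\pi_1(\ap),I)$, and then invoking Proposition~\ref{PropUniqueExternalRepr} for the pure polytope $\PPp$ together with $\PP=\pi_1(\PPp)$. Your explicit lexicographic bookkeeping (and the remark that the comparison never reaches the $\pi_3$ level) is just a more detailed rendering of the paper's one-line deduction.
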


\begin{proof}
Let $\ap$ be a $(I, j)$-pseudovertex of $\PPp$. We claim that $x \in \pi_1(\HH(\ap, I))$ implies $(x, 0, 0) \in \HH(\ap, I)$. Indeed, if $x \in \pi_1(\HH(\ap, I))$, then (as we observed above) $x \in \HH(\pi_1(\ap), I)$, and so there exists $i \in I$ such that $(\mpinverse{\pi_1(\ap)})_j \mptimes x_j \leq  (\mpinverse{\pi_1(\ap)})_i \mptimes x_i$ for all $j \in \oneto{n}\setminus I$. Since by Lemma~\ref{lemma:fst_order_pert_apex} $(\mpinverse{\pi_2(\ap)})_j < (\mpinverse{\pi_2(\ap)})_i$ for any $j \in \oneto{n}\setminus I$, we deduce that $(-\ap_j) \mptimes (x_j, 0, 0) \leqlex (-\ap_i) \mptimes (x_i, 0, 0)$ for all $j \in \oneto{n}\setminus I$, and so $(x, 0, 0) \in \HH(\ap, I)$. This proves our claim. 

As a result of the previous paragraph, if $x$ belongs to the intersection of the images under $\pi_1$ of the half-spaces associated with the $(I, j)$-pseudovertices of $\PPp$, then $(x, 0, 0)$ lies in the intersection of these half-spaces, and so in $\PPp$ by Proposition~\ref{PropUniqueExternalRepr}, which ensures $x \in \PP$ since $\PP = \pi_1(\PPp)$. 

Reciprocally, $\PP$ is obviously included in the intersection of the images under $\pi_1$ of the half-spaces associated with the $(I, j)$-pseudovertices of $\PPp$ because each of these half-spaces contains $\PPp$ and $\PP = \pi_1(\PPp)$.
\end{proof}

Now, let us consider the lift $\liftPp$ of $\PPp$ over $\Kp^n$ defined as the cone generated by the vectors $\liftvp^1, \ldots ,\liftvp^p$, where 
\begin{equation}\label{DefLiftPerturbedPoint}
\liftvp^r_i \mydef t^{-\vp^r_i} 
\end{equation} 
for all $i \in \oneto{n}$ and $r\in \oneto{p}$. By Theorem~\ref{TheoFacetLiftExtRepr}, we know that the set composed of the images under the valuation map of the facet-defining half-spaces of $\liftPp$ contains all the half-spaces associated with the $(I, j)$-pseudovertices of $\PPp$. We are going to build a lift $\liftP$ of $\PP$ based on the lift $\liftPp$ which will prove the first part of Theorem~\ref{ConjDevelinYu}. 

Given real numbers $\mu$ and $\nu$ satisfying $0 < \mu \ll \nu \ll 1$, we define $\liftP$ as the cone generated by the vectors $\liftv^1,\ldots ,\liftv^p$, where
\begin{equation}\label{eq:lift_def}
\liftv^r_i  \mydef 
\bigl(\mu^{-\pi_2(\vp^r_i)} {\nu}^{-\pi_3(\vp^r_i)} \bigr) t^{-\pi_1(\vp^r_i)}  
\end{equation}
for all $i \in \oneto{n}$ and $r\in \oneto{p}$. Observe that $\val(\liftv^r)=\pi_1(\vp^r)=v^r$ for all $r\in \oneto{p}$, and so $\liftP$ is a lift of $\PP$, as desired.

The next result will allow us to relate $\liftPp$ to $\liftP$.

\begin{lemma}\label{lemma:projection}
Let $\liftxp = \sum_{\alpha \in \R^3} \xp_\alpha t^\alpha$ be an element of $\Kp$ with finite support, and let $\mu$ and $\nu$ be real numbers satisfying $0 < \mu \ll \nu \ll 1$. If we define $\liftx \mydef \sum_{\alpha \in \R^3} (\xp_\alpha \mu^{\pi_2(\alpha)} \nu^{\pi_3(\alpha)}) t^{\pi_1(\alpha)} \in \K$,  then 
\[
\val(\liftx) = \pi_1(\val(\liftxp)) \, ,
\]
and the signs of $\liftx$ and $\liftxp$ are identical.
\end{lemma}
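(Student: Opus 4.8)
The plan is to compute the leading term of $\liftx$ explicitly from that of $\liftxp$, and read the conclusion off it. If $\liftxp = 0$ then $\liftx = 0$ and there is nothing to prove, so assume $\liftxp \neq 0$ and let $\alpha_{\min} = (a_1, a_2, a_3)$ be the $\leqlex$-least element of the (finite) support of $\liftxp$. Then $\val(\liftxp) = (-a_1,-a_2,-a_3)$, so $\pi_1(\val(\liftxp)) = -a_1$, and $\liftxp$ is positive precisely when $\xp_{\alpha_{\min}} > 0$. Writing $\liftx = \sum_{a\in\R} c_a t^a$, one has $c_a = \sum\{\xp_\alpha \mu^{\pi_2(\alpha)}\nu^{\pi_3(\alpha)} : \alpha \in \operatorname{supp}(\liftxp),\ \pi_1(\alpha) = a\}$, a finite sum. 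The first (easy) observation is that $\leqlex$-minimality of $\alpha_{\min}$ forces $\pi_1(\alpha) \geq a_1$ for every $\alpha$ in the support, so $c_a = 0$ for all $a < a_1$. Hence everything reduces to showing that, once $\mu$ and $\nu$ are small enough, $c_{a_1} \neq 0$ and $c_{a_1}$ has the same sign as $\xp_{\alpha_{\min}}$: this yields $\val(\liftx) = -a_1 = \pi_1(\val(\liftxp))$, and comparing leading coefficients yields that $\liftx$ and $\liftxp$ have the same sign.

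The core of the argument is then the analysis of $c_{a_1} = \sum_{\alpha\in\Lambda} \xp_\alpha \mu^{\pi_2(\alpha)}\nu^{\pi_3(\alpha)}$, where $\Lambda := \{\alpha\in\operatorname{supp}(\liftxp) : \pi_1(\alpha) = a_1\}$ is finite and non-empty. Minimality of $\alpha_{\min}$ gives three facts: $\pi_2(\alpha) \geq a_2$ for all $\alpha\in\Lambda$; $\pi_3(\alpha) \geq a_3$ for those $\alpha\in\Lambda$ with $\pi_2(\alpha) = a_2$; and $\alpha_{\min}$ is the \emph{only} element of $\Lambda$ with $(\pi_2(\alpha),\pi_3(\alpha)) = (a_2,a_3)$. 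I would factor out $\mu^{a_2}>0$ and split $\Lambda$ into the part where $\pi_2(\alpha)=a_2$ and the part where $\pi_2(\alpha)>a_2$, writing $c_{a_1} = \mu^{a_2}(S_0 + S_+)$ with $S_0 = \sum_{\alpha\in\Lambda,\,\pi_2(\alpha)=a_2}\xp_\alpha\nu^{\pi_3(\alpha)}$ and $S_+ = \sum_{\alpha\in\Lambda,\,\pi_2(\alpha)>a_2}\xp_\alpha\mu^{\pi_2(\alpha)-a_2}\nu^{\pi_3(\alpha)}$. In $S_0$, factoring out $\nu^{a_3}>0$ leaves $\xp_{\alpha_{\min}}$ plus a sum each of whose terms carries a \emph{strictly positive} power of $\nu$; hence for $\nu$ small enough $S_0$ is non-zero with the sign of $\xp_{\alpha_{\min}}$ and $|S_0|\geq \tfrac12\,\nu^{a_3}|\xp_{\alpha_{\min}}|$. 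Fixing such a $\nu$, in $S_+$ the numbers $\nu^{\pi_3(\alpha)}$ are now fixed constants while every exponent $\pi_2(\alpha)-a_2$ is strictly positive, so $S_+\to 0$ as $\mu\to 0^+$; thus for $\mu$ small enough relative to $\nu$ one has $|S_+|<|S_0|$, which forces $S_0+S_+$, and therefore $c_{a_1}$, to be non-zero with the sign of $\xp_{\alpha_{\min}}$.

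To conclude I would assemble these pieces: $c_a = 0$ for $a < a_1$ together with $c_{a_1}\neq 0$ give $\val(\liftx) = -a_1 = \pi_1(\val(\liftxp))$, and the sign of the leading coefficient $c_{a_1}$ being that of $\xp_{\alpha_{\min}}$ gives that $\liftx$ and $\liftxp$ are simultaneously positive, negative, or null. It is worth spelling out what ``$0<\mu\ll\nu\ll1$'' means here: there is a threshold $\nu_0>0$ and, for each $\nu\in(0,\nu_0)$, a further threshold $\mu_0(\nu)>0$ — all depending only on the finitely many exponents and coefficients of $\liftxp$ — such that the conclusion holds whenever $0<\nu<\nu_0$ and $0<\mu<\mu_0(\nu)$; in the applications the lemma is invoked for a fixed finite family of series and one takes the minimum of the corresponding thresholds. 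The main obstacle is precisely this: organizing the two-stage asymptotics (first in $\nu$, then in $\mu$, \emph{in that order}) and pinning down the quantifier structure hidden in the ``$\ll$'' notation. No deeper ingredient is needed; the rest is routine bookkeeping with leading terms of Hahn series.
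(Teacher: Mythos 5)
Your proof is correct and follows essentially the same route as the paper: group the terms of $\liftx$ by the first coordinate of the exponent, observe that the smallest exponent appearing is $\pi_1(\alpha_{\min})$, and show that the corresponding coefficient is non-zero with the sign of $\xp_{\alpha_{\min}}$ once $0<\mu\ll\nu\ll 1$. The only difference is one of detail: the paper merely asserts the sign agreement of these grouped coefficients, whereas you spell out the two-stage asymptotics (first $\nu$, then $\mu$) and the quantifier structure behind the ``$\ll$'' notation, which is a faithful elaboration rather than a different argument.
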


\begin{proof}
Let $\Ap \subset \R^3$ be the support of $\liftxp$. 
Defining $A\mydef \pi_1 (\Ap)$, we can write $\liftx = \sum_{\beta \in A} d_\beta t^\beta$ and $\liftxp = \sum_{\beta \in A} \liftdp_\beta t^{(\beta, 0, 0)}$, where the $d_\beta$ and $\liftdp_\beta$ are respectively given by:
\begin{align*}
d_\beta & = \sum_{\alpha \in \Ap \cap \pi^{-1}_1(\beta)} \xp_\alpha \mu^{\pi_2(\alpha)} \nu^{\pi_3(\alpha)} \\
\liftdp_\beta & =  \sum_{\alpha \in \Ap \cap \pi^{-1}_1(\beta)} \xp_\alpha t^{(0, \pi_2(\alpha), \pi_3(\alpha))}
\end{align*}
Provided that $\nu$ is sufficiently small, and that $\mu$ is sufficiently small with respect to $\nu$, $d_\beta \in \R$ and $\liftdp_\beta \in \Kp$ have the same sign. The result now follows from the fact that the sign of $\liftxp$ is given by the sign of $\liftdp_{\pi_1(\mpinverse{\val (\liftxp ))}}$, and that we obviously have $\val(\liftx) = \pi_1(\val(\liftxp))$ by the definition of $\liftx$.
\end{proof}

We are now ready to provide a proof of the first part of Theorem~\ref{ConjDevelinYu}, which is given by the next proposition. 

\begin{proposition}
The images under the valuation map of the facet-defining half-spaces of the lift $\liftP$ defined above provide an external representation of $\PP$. 
\end{proposition}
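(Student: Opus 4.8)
The strategy is to transfer the external representation of $\PPp$ furnished by its $(I,j)$-pseudovertices — which by Theorem~\ref{TheoFacetLiftExtRepr} (valid over $\Tp$, as explained above) comes from facet-defining half-spaces of the lift $\liftPp$ — down to $\PP$ and the lift $\liftP$, using Lemma~\ref{lemma:projection} as the dictionary between $\Kp$ and $\K$. Concretely, let $\ap$ be a $(I,j)$-pseudovertex of $\PPp$. By Theorem~\ref{TheoFacetLiftExtRepr} there is a subset $R \subset \oneto{p}$ with $|R| = n-1$ such that $\cone(\{\liftvp^r\}_{r\in R})$ is a facet of $\liftPp$, the defining linear form $\sum_i \liftfp_i\liftxp_i$ has all $\liftfp_i \neq 0$, it vanishes on $\{\liftvp^r\}_{r\in R}$, is $\geq 0$ on all $\liftvp^r$, and $\val(\liftfp_i) = -\ap_i$ so that the image of this half-space under $\val$ is $\HH(\ap, I)$.

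**Main step.** Apply the correspondence of Lemma~\ref{lemma:projection} coordinatewise: set $\liftf_i := \sum_{\alpha}(\pi_2,\pi_3\text{-reweighted coefficients of }\liftfp_i)\,t^{\pi_1(\alpha)} \in \K$, for $\mu \ll \nu \ll 1$ chosen small enough to work simultaneously for all the finitely many $(I,j)$-pseudovertices $\ap$ (each giving finitely many $\liftfp_i$ of finite support, so only finitely many smallness conditions arise). By Lemma~\ref{lemma:projection}, $\val(\liftf_i) = \pi_1(\val(\liftfp_i)) = \pi_1(-\ap_i) = -\pi_1(\ap)_i$, and $\liftf_i$ has the same sign as $\liftfp_i$ (in particular $\liftf_i \neq 0$, and $I = \{i : \liftf_i > 0\}$ is preserved). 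The key point is that the linear form $\sum_i \liftf_i\liftx_i$ evaluated at the lift vectors $\liftv^r$ of \eqref{eq:lift_def} is exactly the image under the transformation of Lemma~\ref{lemma:projection} of the series $\sum_i \liftfp_i\liftvp^r_i$: this is because $\liftv^r_i$ is obtained from $\liftvp^r_i$ by precisely the substitution $t^{(\beta,0,0)} \mapsto \mu^{\pi_2}\nu^{\pi_3}t^{\pi_1}$ (recall \eqref{DefLiftPerturbedPoint} and \eqref{eq:lift_def}), and this substitution is a ring homomorphism on series of finite support. Hence $\sum_i \liftf_i\liftv^r_i$ has the same sign as $\sum_i \liftfp_i\liftvp^r_i$ for every $r$: it vanishes for $r \in R$ and is $> 0$ (or $\geq 0$) for $r \notin R$. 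Therefore $\{\liftx \in \K^n \mid \sum_i \liftf_i\liftx_i \geq 0\}$ contains $\liftP$, contains $\{\liftv^r\}_{r\in R}$ in its boundary, and these $n-1$ points are in general position (by the remark in Section~\ref{SubSectionLifts}, since the $\vp^r$ hence the $\pi_1(\vp^r)$... — more directly, the $\gamma^r$ are generic so the $\liftv^r$ are too); thus by Proposition~\ref{prop:facet_characterization} applied to $\liftP$ it defines a facet of $\liftP$ whose image under $\val$ is $\HH(\pi_1(\ap), I)$.

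**Conclusion.** Running over all $(I,j)$-pseudovertices $\ap$ of $\PPp$, the images under $\val$ of the corresponding facet-defining half-spaces of $\liftP$ are exactly the half-spaces $\HH(\pi_1(\ap), I)$. By Proposition~\ref{prop:projection_halfspaces}, these half-spaces form an external representation of $\PP$. Since every facet-defining half-space of $\liftP$ has an image under $\val$ which is a tropical half-space containing $\PP$, adding the remaining facet-defining half-spaces only shrinks the intersection no further than $\PP$; hence the full collection of images under $\val$ of the facet-defining half-spaces of $\liftP$ is an external representation of $\PP$, as required.

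**Main obstacle.** The delicate point is the uniformity of the choice of $\mu, \nu$: Lemma~\ref{lemma:projection} requires $\mu$ and $\nu$ small enough depending on the series at hand, and here we must pick a single pair $(\mu,\nu)$ that simultaneously guarantees the sign-preservation for all the forms $\liftfp_i$ (one per pseudovertex and coordinate) \emph{and} for all the evaluations $\sum_i \liftfp_i\liftvp^r_i$ (one per pseudovertex and index $r$). This is fine because $\PPp$ has finitely many $(I,j)$-pseudovertices and each relevant series has finite support, so only finitely many smallness thresholds are imposed and we take $\mu,\nu$ below all of them; but this bookkeeping, together with verifying that the substitution $t^{(\beta,0,0)} \mapsto \mu^{\pi_2}\nu^{\pi_3}t^{\pi_1}$ genuinely commutes with the linear-form evaluation, is where the argument needs care.
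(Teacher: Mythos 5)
Your overall strategy is the same as the paper's: both arguments use Lemma~\ref{lemma:projection} as the dictionary between $\Kp$ and $\K$, apply Theorem~\ref{TheoFacetLiftExtRepr} to $\PPp$ and $\liftPp$, and conclude with Proposition~\ref{prop:projection_halfspaces}, with the observation that every facet-defining half-space of $\liftP$ tropicalizes to a half-space containing $\PP$. The difference is only in packaging: you transfer the half-spaces facet by facet through the substitution $t^{(\beta_1,\beta_2,\beta_3)}\mapsto \mu^{\beta_2}\nu^{\beta_3}t^{\beta_1}$ (correctly observed to be a ring homomorphism on finite-support series), whereas the paper first applies Lemma~\ref{lemma:projection} to all minors of the generator matrices to get that $\liftP$ and $\liftPp$ are combinatorially equivalent, and then identifies the facet-defining coefficients with corresponding (Cramer) minors so that their valuations match under $\pi_1$.

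There is, however, one step that does not stand as written: you invoke Proposition~\ref{prop:facet_characterization} for $\liftP$, but that proposition assumes that the tropical generators $v^1,\dots,v^p$ are in general position, which is precisely the hypothesis that fails in this section (only the perturbed points $\vp^r$ are in general position). Relatedly, your justification that $\{\liftv^r\}_{r\in R}$ are in general position (``the $\gamma^r$ are generic so the $\liftv^r$ are too'') is not a proof: genericity of the $\gamma^r$ only makes the minors of the $\liftvp$-matrix non-null; to conclude that the corresponding minors of the $\liftv$-matrix are non-null (indeed of the same sign) you must apply Lemma~\ref{lemma:projection} to those minors as well, i.e.\ add them to your finitely many smallness conditions on $(\mu,\nu)$ --- this is exactly what the paper does. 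Once that is in place the repair is immediate and Proposition~\ref{prop:facet_characterization} is not needed at all: your half-space $\bigl\{\liftx\in\K^n \mid \sum_{i}\liftf_i\liftx_i\geq 0\bigr\}$ contains $\liftP$ and its boundary contains the $n-1$ linearly independent generators $\{\liftv^r\}_{r\in R}$ of the full-dimensional cone $\liftP$, so it is facet-defining by ordinary convexity over the ordered field $\K$, and its image under the valuation map is $\HH(\pi_1(\ap),I)$ directly by Proposition~\ref{Prop2.4DevelinYu}, since $\val(\liftf_i)=\pi_1(\val(\liftfp_i))$ and the signs of $\liftf_i$ and $\liftfp_i$ agree. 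You should also fix the Cramer choice of $\liftfp$ (coefficients given by minors of the $\liftvp$-matrix), which guarantees the finite support you need to apply Lemma~\ref{lemma:projection}; an arbitrary defining form need not have finite support. With these repairs your argument is correct and agrees in substance with the paper's proof.
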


\begin{proof}
By Lemma~\ref{lemma:projection}, each minor of the matrix with columns $\liftv^1, \dots, \liftv^p$ has the same sign as the corresponding minor of the matrix with columns $\liftvp^1, \dots , \liftvp^p$. Besides, the valuation of the former is given by the image under $\pi_1$ of the valuation of the latter. In consequence, the polyhedra $\liftP$ and $\liftPp$ are combinatorially equivalent. Moreover, if $\liftH = \bigl\{\liftx \in \K^n \mid \sum_{i\in \oneto{n}} \liftf_i \liftx_i \geq 0 \bigr\}$ is the defining half-space of a facet of $\liftP$, and $\liftHp = \bigl\{ \liftxp \in \Kp^n \mid \sum_{i\in \oneto{n}} \liftfp_i \liftxp_i \geq 0 \bigr\}$ is the defining half-space of the corresponding facet of $\liftPp$, then the vectors $\val(\liftf)$ and $\pi_1(\val(\liftfp))$ coincide in $\tproj^{n-1}$. Indeed, up to a multiplicative factor applied to $\liftf$ and $\liftfp$, each coefficient $\liftf_i$ is given by a minor of the matrix with columns $\liftv^1, \dots, \liftv^p$, and $\liftfp_i$ is given by the corresponding minor of the matrix with columns $\liftvp^1, \dots , \liftvp^p$. Since by Proposition~\ref{Prop2.4DevelinYu} we have 
$\val(\liftH \cap \K^+)=\HH (a,I)$ and 
$\val(\liftHp \cap \Kp^+)=\HH (\ap,I)$,
 where $a=(\mpinverse{\val (\liftf_1)}, \ldots ,\mpinverse{\val (\liftf_n)})$, $\ap=(\mpinverse{\val (\liftfp_1)}, \ldots ,\mpinverse{\val (\liftfp_n)})$, and $I=\{i\in \oneto{n}\mid \liftf_i > 0\}=\{i\in \oneto{n}\mid \liftfp_i > 0\}$, it follows that $\pi_1 (\val(\liftHp \cap \Kp^+))= \pi_1(\HH (\ap,I))=\HH (\pi_1(\ap),I)=\HH (a,I)=\val(\liftH \cap \K^+)$.

By Theorem~\ref{TheoFacetLiftExtRepr}, we know that the set composed of the images under the valuation map of the facet-defining half-spaces of $\liftPp$ is an external representation of $\PPp$ which contains all the half-spaces associated with the $(I, j)$-pseudovertices of $\PPp$. Besides, by Proposition~\ref{prop:projection_halfspaces}, the images under $\pi_1$ of these half-spaces form an external representation of $\PP$. Finally, the image of any facet-definining half-space of $\liftP$ under the valuation map contains $\PP$. We deduce from the previous paragraph that the images under the valuation map of the facet-defining half-spaces of $\liftP$ yield an external representation of $\PP$. 
\end{proof}

We conclude the paper with a remark concerning Theorem~\ref{ConjDevelinYu}.

\begin{remark}
The first part of Theorem~\ref{ConjDevelinYu} does not apply to all lifts of $\PP$ when its generators are in arbitrary position. For instance, consider the tropical polytope $\PP$ in Figure~\ref{fig:perturbation}(a) generated by the points $(0,0,0)$, $(0,1,0)$ and $(0,0,1)$, and the lift~$\liftP$ generated by $\lift{w}^1 = (1, 1, 1)$,  $\lift{w}^2 = (1, t^{-1}, \delta)$ and $\lift{w}^3 = (1, \delta, t^{-1})$, where the coefficient $\delta$ belongs to $]0,1[$. As illustrated in Figure~\ref{fig:tropicalization}, it can be verified that the intersection of the images under the valuation map of the facet-defining half-spaces of this lift strictly contains the initial polytope. 

\begin{figure}
\begin{center}
\begin{tikzpicture}[convex/.style={draw=lightgray,fill=lightgray,fill opacity=0.7},convexborder/.style={ultra thick},vtx/.style={blue!50},
shsborder/.style={orange,dashdotted,very thick},
shs/.style={draw=none,pattern=north west lines,pattern color=orange,fill opacity=0.5},
extended line/.style={shorten >=-#1,shorten <=-#1},extended line/.default=1cm,
scale=0.9,>=triangle 45
]

\begin{scope}[shift={(0,0)}]
\coordinate (v0) at (0.1,0.1);
\coordinate (v1) at (0,1.8);
\coordinate (v2) at (1.8,0);

\draw[gray!50,->] (-1.5,-1) -- (3.5,-1) node[color=gray!50,above,font=\scriptsize] {$\lift{x}_2$};
\draw[gray!50,->] (-1,-1.5) -- (-1,3.5) node[color=gray!50,above,font=\scriptsize] {$\lift{x}_3$};

\filldraw[convex] (v0) -- (v1) -- (v2) -- cycle;
\draw[convexborder] (v0) -- (v1) -- (v2) -- cycle;

\begin{scope}
\clip (-1.5,-1.5) rectangle (3.5,3.5);

\draw[shsborder,green!60!black,extended line=2cm] (v1) -- (v0);
\draw[shsborder,red!50,extended line=2cm] (v0) -- (v2);
\draw[shsborder,extended line=2cm] (v1) -- (v2);
\end{scope}

\filldraw[vtx] (v0) circle (3pt) node[below left=-0.5ex] {$\lift{w}^1$};
\filldraw[vtx] (v1) circle (3pt) node[left] {$\lift{w}^3$};
\filldraw[vtx] (v2) circle (3pt) node[below] {$\lift{w}^2$};
\end{scope}

\begin{scope}[shift={(8,0)}]

\draw[gray!30,very thin,step=2] (-1.5,-1.5) grid (3.5,3.5);
\draw[gray!20,very thin,dashed] (-1.5,-1.5) grid (3.5,3.5);
\draw[gray!50,->] (-1.5,0) -- (3.5,0) node[color=gray!50,below,font=\scriptsize] {$x_2-x_1$};
\draw[gray!50,->] (0,-1.5) -- (0,3.5) node[color=gray!50,above,font=\scriptsize] {$x_3-x_1$};
\filldraw[convex] (0,0) -- (0,2) -- (2,2) -- (2,0) -- cycle;
\draw[convexborder] (0,0) -- (0,2) -- (2,2) -- (2,0) -- cycle (0,2) -- (-1.5,2) (2,0) -- (2,-1.5);
\draw[convexborder,dotted] (-1.5,2) -- (-2.25,2) (2,-1.5) -- (2,-2.25);

\draw[shsborder,green!60!black] (-1.5,1.95) -- (-0.05,1.95) -- (-0.05,-1.5);
\draw[shsborder,red!50] (-1.5,-0.05) -- (1.95,-0.05) -- (1.95,-1.5);
\draw[shsborder] (-1.5,2.05) -- (2.05,2.05) -- (2.05,-1.5);
\end{scope}

\end{tikzpicture}
\end{center}
\caption{Left: A representation of a lift of the polytope of Figure~\ref{fig:perturbation}(a) which does not satisfy the first part of Theorem~\ref{ConjDevelinYu}. This lift is represented by its section with the hyperplane $\{\liftx \in \K^3 \mid \liftx_1 = 1 \}$, and by instantiating the formal parameter $t$ by a sufficiently small positive value. Right: the intersection of the images under the valuation map of the facet-defining half-spaces of the lift consists of $\PP$ along with the (unbounded) set of points of the form $(0,1,\lambda)$ and $(0,\lambda,1)$ for $\lambda \in \R_{\leq 0}$ (the boundaries of the half-spaces are depicted by dashed lines).}\label{fig:tropicalization}
\end{figure}

In a similar way, we remark that Theorem~\ref{ConjDevelinYu} cannot be extended to non-pure tropical polytopes, even if their generators are in general position. As an example, consider the tropical polytope $\PP$ generated by the points $v^1 = (0,-1,1)$, $v^2 = (0,0,0)$ and $v^3 = (0,1,-1)$, which are in general position (this polytope has been depicted on the left-hand side of Figure~\ref{fig:non-pure} above). Thanks to the characterization given in Proposition~\ref{prop:facet_characterization}, it can be verified that the facet-defining half-spaces of any lift of $\PP$ provide the tropical half-spaces $\HH((0,0,1),\{2,3\})$, $\HH((0,1,0),\{2,3\})$ and $\HH((0,1,1),\{1\})$. These half-spaces are precisely the half-spaces whose boundaries are depicted on the right-hand side of Figure~\ref{fig:tropicalization}, and so their intersection (which is an unbounded set) strictly contains $\PP$. 
\end{remark}

\section*{Acknowledgements}

Both authors are indebted to Pascal Benchimol and St{\'e}phane Gaubert for several insightful discussions on tropical geometry.

\bibliographystyle{alpha}
\def\cprime{$'$} \def\cprime{$'$}

\end{document}